\documentclass[vecarrow]{svjour3}       
\usepackage{graphicx}
\usepackage[margin=1in]{geometry}  
\usepackage{amsfonts,amsmath,amssymb}
\usepackage{graphics}
\usepackage{epsfig}
\usepackage{graphicx}
\usepackage{multicol}
\usepackage{multirow}
\usepackage{slashbox}
\usepackage{authblk}
\usepackage{kpfonts}

\smartqed
\numberwithin{equation}{section}

\newcommand{\re}[1]{\mathbb{#1}}      

\spnewtheorem{thm}{Theorem}{\bf}{\it}
\spnewtheorem{lem}[thm]{Lemma}{\bf}{\it}
\spnewtheorem{prop}[thm]{Proposition}{\bf}{\it}
\spnewtheorem{assu}[thm]{Assumption}{\bf}{\it}
\spnewtheorem{rem}[thm]{Remark}{\bf}{\it}
\spnewtheorem{def1}[thm]{Definition}{\bf}{\it}
\spnewtheorem{algorithm}[thm]{Algorithm}{\bf}{\it}

\DeclareMathOperator*{\argmin}{arg\,min}

\begin{document}

\title{A Preconditioned Descent Algorithm for Variational Inequalities of the Second Kind Involving the $p$-Laplacian Operator\thanks{Supported in part by the Ecuadorian Secretary of Higher Education, Science, Technology and Innovation, SENESCYT, under the project PIC-13-EPN-001 ``Numerical Simulation of Cardiac and Circulatory Systems'', the Escuela Polit\'ecnica Nacional, under the project PIMI 14-12 ``Numerical Simulation of Viscoplastic Fluids in Food Industry" and the MATH-AmSud Project ``SOCDE-Sparse Optimal Control of Differential Equations: Algorithms and Applications''.}}

\titlerunning{Preconditioned Descent Algorithm for VIs Involving the $p$-Laplacian} 

\author{Sergio Gonz\'alez-Andrade}

\institute{S. Gonz\'alez-Andrade \at
              Research Center on Mathematical Modeling (ModeMat) and \\Department of Mathematics -  Escuela Polit\'ecnica Nacional, Quito\\\small Ladr\'on de Guevara E11-253\\Quito 170525, Ecuador\\
\email{sergio.gonzalez@epn.edu.ec}}

\date{Received: date / Accepted: date}

\maketitle

\begin{abstract}
This paper is concerned with the numerical solution of a class of variational inequalities of the second kind, involving the $p$-Laplacian operator. This kind of problems arise, for instance, in the mathematical modelling of non-Newtonian fluids. We study these problems by using a regularization approach, based on a Huber smoothing process. Well posedness of the regularized problems is proved, and convergence of the regularized solutions to the solution of the original problem is verified.  We propose a preconditioned descent method for the numerical solution of these problems and analyze the convergence of this method in function spaces. The existence of admissible descent directions is established by variational methods and admissible steps are obtained by a backtracking algorithm which approximates the objective functional by polynomial models. Finally, several numerical experiments are carried out to show the efficiency of the methodology here introduced. 
\keywords{Variational inequalities \and $p$-Laplacian \and optimization and variational techniques \and Herschel-Bulkley model.}
\subclass{47J20 \and 65K10 \and 65K15 \and 65N30.}
\end{abstract}

\section{Introduction}\label{intro}
Variational inequalities (VIs) provide a versatile background for the analysis and modelling of physical phenomena which involve free boundary problems. This kind of problems include, for instance, contact of rigid bodies, flow of electro- and magneto-rheological fluids and flow of viscoplastic materials, among others (see \cite{jidiaz,dlRGpf,dlRG2D,dlRHint,Huilgol}). The wide range of applications of this kind of free boundary problems make the analysis and the numerical simulation of their associated VIs a quite interesting and challenging field of research.

On the other hand, the $p$-Laplacian operator has been widely analysed as a model case for quasilinear and degenerate elliptic equations (see \cite{jidiaz,Struwe}). Regarding the $p$-Laplacian problem, several analytical results, concerning existence and multiplicity of solutions, have been obtained in, \textit{e.g.}, \cite{Simon1,Struwe}. Further, the numerical analysis of the $p$-Laplacian problem has been a productive field of research. Mainly, the finite element approximation has been broadly studied in the literature in, for example, \cite{barret,glomaro,Huang} and the references therein. The numerical realisation of the $p$-Laplacian has been carried out by the Augmented Lagrangian method \cite{glomaro} and, recently, by using optimization and variational techniques \cite{Huang} and multigrid algorithms \cite{bermejomg}. 

In spite of the fact that the $p$-Laplacian operator is a extensively studied field, scarce work can be found in the numerical analysis of variational inequalities involving this differential operator. The obstacle problem with associated operator of the $p$-Laplacian type has been analyzed in \cite{liubarret}. There, the authors analize a finite element approximation of the problem and provide error estimates for such  approximation. In \cite{joubue}, the obstacle problem in the context of a glaceology application has been studied. The authors consider a variational inequality of the first kind, involving the $p$-Laplacian operator. They analyze the existence, uniqueness and regularity of solutions, and propose a finite element approximation of the problem. Elliptic and parabolic quasi-variational inequalities involving quasilinear operators are considered in \cite{hinrau1,hinrau2}. In these papers, the authors propose and study a semismooth Newton approach for the numerical solution of these problems, and provide several theoretical results regarding existence and regularity of solutions. Finally, in \cite{Huang}, the authors propose a finite dimensional descent algorithm for the numerical solution of  several differentiable problems, including the classical Dirichlet $p$-Laplacian problem and a class of variational inequalities wich combines the Laplacian and the $p$-Laplacian operators, for $1<p<\infty$. 

In contrast with the previous contributions, in this paper we are concerned with variational inequalities of the second kind. The importance of this class of variational inequalities lies on the fact that they can be used to model the flow of a particular class of viscoplastic materials: the Herschel-Bulkley fluids. 

Herschel-Bulkley is a power-law model with plasticity. This model is used to simulate some materials whose behaviour depends on the flow index $p$. This constant measures the degree to which the fluid is shear-thinning ($1<p<2$) or shear-thickening ($p>2$). The Herschel-Bulkley model can be seen as a generalization of the classical Bingham model, which is retrieved from the first one by taking $p=2$ (\cite{dlRGpf,dlRG2D}). Depending on the value of the power index, this model can be used to simulate a wide range of materials from nail polish or whipped cream (shear-thinning fluids) to quicksand or silly putty (shear-thickening fluids) (see \cite{Chhabra}).  Furthermore, the Herschel-Bulkley model has proved to be accurate in the modelling of blood, a known shear-thinning fluid \cite{Quart-blood,Sankar-blood,Shah-blood}.


In consequence, the numerical resolution of VIs involving a $p$-Laplacian operator is an important research field. A classical approach to these problems is the Augmented Lagrangian method (see \cite{Huilgol}), while, from our point of view, the application of optimization and variational techniques has not been explored enough in this context. Several optimization problems involving non differentiable functionals have been successfully analyzed by using this approach (see \cite{DelosR}). Further, the analysis of this kind of methods in function spaces is a challenging but promising research field.

As stated before, in this paper we are concerned with a class of variational inequalities of the second kind involving the $p$-Laplacian operator and the $L^1$-norm of the gradient. Our main intention is to develop an efficient algorithm for the numerical solution of these problems. The main challenge in this aim consists in designing a numerical strategy, which allows to obtain an accurate solution with a fast convergent method.  In the context of numerical solution of VIs of the second kind, the local smoothing techniques, such as Huber regularization, have proved to be an effective way to achieve such a goal (see \cite{dlRGpf,dlRG2D}).  Therefore, we study the variational inequality as an equivalent minimization problem of a non differentiable functional, and we regularize the functional by a Huber procedure. Further, the convergence of the regularized solutions to the original one is established. 

For the numerical solution of the VIs under study, we propose a preconditioned descent algorithm in function spaces. Several issues arise in this approach. Mainly, we need to discuss the existence of admissible search directions and admissible step sizes. Admissibility of step sizes depends on the line search strategy. Here, we propose a backtracking algorithm which approximates the objective functional by polynomial models. In this way, the algorithm provides admissible step sizes with low computational effort. 

The existence of admissible search directions is analyzed considering the two cases $1<p<2$ and $p>2$, separately. In the case $1<p<2$, we first discuss the properties of a suitable Hilbert space in which we will propose and analyse the algorithm. Next, we define the preconditioner and prove the existence of admissible search directions. This is achieved by discussing the conditions for the Zoutendijk condition to hold (\cite{nocacta,SunYuan}). Finally, we state and prove a global convergence result for this algorithm.  The case $p>2$ poses analytical issues which prevent us from studying the algorithms in function spaces. In fact, it is not possible to prove existence of admissible search directions in the same function space in which the elliptic preconditioner is defined. We discuss in detail these issues and propose an alternative algorithm in a finite element space. Next,  we state and prove a global convergence result for this algorithm as well. 

Though the descent algorithms are usually slow, in our case the design of suitable preconditioners and the use of an innovative line search algorithm help us to obtain a robust algorithm which only needs the solution of one linear system per iteration. Further, since the algorithms are proposed, at least in the $1<p<2$ case, in function spaces, they are expected to exhibit mesh independence.

The paper is organized as follows. In Section \ref{sec:statandreg} we introduce and analyze the variational inequality and its associated optimization problem. Next, by using Fenchel’s duality theory, a necessary condition is derived and,  since the original problem is ill-posed, a family of regularized optimization problems is introduced and the convergence of the regularized solutions to the original one is proved. In Section \ref{sec:preconalgo} the numerical approach to these problems is studied. We propose preconditioned descent algorithms for the regularized optimization problems, considering separately the two cases $1<p<2$ and $p> 2$. Particularly, we  prove a global convergence result for all the algorithms constructed. Section \ref{sec:numres} is devoted to the numerical experience. First we discuss the main issue regarding the implementation of our algorithms. Mainly, the discretization issues and the implementation of the line-search methods. Next, several numerical experiments, which illustrate the main features of the proposed approach, are carried out. Finally, in Section \ref{sec:conclusions}, we outline conclusions on this work and discuss some challenging issues that can be analyzed in future contributions.

\section{Problem Statement and Regularization}\label{sec:statandreg}
Let us start this section by introducing some important notation. The scalar product in $\re{R}^N$ and the Euclidean norm are denoted by $(\cdot ,\cdot)$ and $|\cdot|$, respectively. The duality pairing between a Banach space $V$ and its dual $V^*$ is represented by $\langle\cdot,\cdot\rangle_{V^*,V}$, and $\|\cdot\|_V$ stands for the norm of $V$. Given $1<p<\infty$, the conjugate exponent is denoted by $p'$. Further, the duality pairing between $L^p$ and $L^{p'}$ spaces is denoted by $\langle\cdot,\cdot\rangle_{p',p}$  (see \cite[Th. 4.11]{brezis}). We use the classical notation for the Sobolev space $W_0^{1,p}(\Omega)$ and the notation $W^{-1,p'}(\Omega)$ for its dual space.  Finally, we use the following bold notation $\mathbf{L}^p(\Omega):=L^p(\Omega)\times L^p(\Omega)$ for $1<p<\infty$.

This work is concerned with the numerical solution of the following class of variational inqualities of the second kind: find $u\in W_0^{1,p}(\Omega)$ such that
\begin{equation*}
\int_{\Omega}|\nabla u|^{p-2} (\nabla u,\nabla v)\,dx + g\int_{\Omega}|\nabla v|\,dx -g\int_{\Omega}|\nabla u|\,dx \geq \int_{\Omega} f (v-u)\,dx,\,\,\forall v\in W_0^{1,p}(\Omega),
\end{equation*}
where $1<p<\infty$, $g>0$ and $f\in L^{p'}(\Omega)$. 

It is well known that this variational inequality represents a necessary optimality condition for the following optimization problem of a non-smooth functional
\begin{equation}\label{eq:probvarin}
\underset{u\in W_0^{1,p}(\Omega)} {\min} J(u):=\frac{1}{p}\int_{\Omega}|\nabla u|^{p}\,dx + g\int_{\Omega}|\nabla u|\,dx - \int_{\Omega} f u\,dx.
\end{equation}
Therefore, we will focus on the numerical solution of \eqref{eq:probvarin}, by using optimization and variational techniques.

\begin{thm}\label{th:poborig}
Let $1<p<\infty$. Then, problem \eqref{eq:probvarin} has a unique solution $\overline{u}\in W_0^{1,p}(\Omega)$.
\end{thm}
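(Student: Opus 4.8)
The plan is to apply the direct method of the calculus of variations, which is the natural route for a coercive, convex, lower-semicontinuous functional on a reflexive Banach space. Since $1<p<\infty$, the space $W_0^{1,p}(\Omega)$ is reflexive, and by the Poincaré inequality the seminorm $\|\nabla\cdot\|_{\mathbf{L}^p(\Omega)}$ is equivalent to the full norm $\|\cdot\|_{W_0^{1,p}(\Omega)}$. The first step is to establish coercivity of $J$. Noting that the term $g\int_\Omega|\nabla u|\,dx$ is nonnegative and estimating the linear term by Hölder's inequality and Poincaré, I would obtain a bound of the form $J(u)\geq \frac{1}{p}\|\nabla u\|_{\mathbf{L}^p(\Omega)}^p - C\|f\|_{L^{p'}(\Omega)}\|\nabla u\|_{\mathbf{L}^p(\Omega)}$. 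Since $p>1$, the $p$-th power term dominates the linear one, so $J(u)\to+\infty$ as $\|u\|_{W_0^{1,p}(\Omega)}\to\infty$, and in particular $\inf J>-\infty$ (with $J(0)=0$ giving $\inf J\leq 0$).

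Next I would verify that $J$ is weakly lower semicontinuous. The cleanest argument is to observe that $J$ is convex, being the sum of the convex functional $u\mapsto\frac{1}{p}\int_\Omega|\nabla u|^p\,dx$ (as $t\mapsto\frac{1}{p}|t|^p$ is convex), the convex functional $u\mapsto g\int_\Omega|\nabla u|\,dx$, and the continuous linear functional $u\mapsto-\int_\Omega f u\,dx$. Since $J$ is moreover strongly continuous on $W_0^{1,p}(\Omega)$, convexity yields weak lower semicontinuity by the standard fact that convex, strongly lower-semicontinuous functionals are weakly lower semicontinuous. With coercivity and weak lower semicontinuity in hand, existence follows the usual scheme: take a minimizing sequence $\{u_n\}$, deduce from coercivity that it is bounded in $W_0^{1,p}(\Omega)$, extract by reflexivity a weakly convergent subsequence $u_{n_k}\rightharpoonup\overline{u}$, and conclude $J(\overline{u})\leq\liminf_k J(u_{n_k})=\inf J$, so that $\overline{u}$ is a minimizer.

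For uniqueness I would exploit strict convexity. The key observation is that $t\mapsto\frac{1}{p}|t|^p$ is strictly convex on $\re{R}^N$ for $1<p<\infty$. If $u_1\neq u_2$ were two minimizers, then $\nabla u_1\neq\nabla u_2$ on a set of positive measure, because $\nabla u_1=\nabla u_2$ a.e. together with the zero boundary condition in $W_0^{1,p}(\Omega)$ would force $u_1=u_2$. Strict convexity of the $p$-Dirichlet energy then gives a strict inequality for the midpoint $\frac{1}{2}(u_1+u_2)$, while the remaining terms are convex, so that $J\bigl(\frac{1}{2}(u_1+u_2)\bigr)<\frac{1}{2}\bigl(J(u_1)+J(u_2)\bigr)=\inf J$, contradicting minimality. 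Hence the minimizer is unique.

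I expect the only genuinely delicate point to be the weak lower semicontinuity, and even that reduces to standard convex analysis once convexity and strong continuity are noted; coercivity and the uniqueness argument are routine. It is worth remarking that this existence-uniqueness result holds uniformly for all $1<p<\infty$, the subsequent distinction between the cases $1<p<2$ and $p>2$ being relevant only for the design and convergence analysis of the numerical algorithms, not for the well-posedness of \eqref{eq:probvarin}.
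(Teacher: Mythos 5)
Your proposal is correct and follows essentially the same route as the paper: the paper also establishes continuity, strict convexity and coercivity of $J$ on the reflexive space $W_0^{1,p}(\Omega)$ and then invokes standard existence--uniqueness results for such functionals, whereas you simply spell out the underlying direct method (weak lower semicontinuity from convexity plus strong continuity, boundedness of a minimizing sequence, weak compactness) and the strict-convexity uniqueness argument in full detail. No gaps.
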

\begin{proof}
Note that functional $J(\cdot)$ can be rewritten as
\[J(u)=\frac{1}{p}\|u\|_{W_0^{1,p}}^p + g\int_{\Omega}|\nabla u|\,dx - \int_{\Omega} f u\,dx.\]
Therefore, it is clear that $J(\cdot)$ is a continuous and strictly convex functional, which satisfies that
\[\underset{\|u\|_{W_0^{1,p}}\rightarrow\infty}{\lim} J(u)=+\infty.\]
This fact yields (see, for instance, \cite[Ch. 2]{jahn} and \cite[Ch. 1]{lions}) the existence of a unique solution $\overline{u}\in W_0^{1,p}(\Omega)$ for the problem \eqref{eq:probvarin}.\qed
\end{proof}

\subsection{A Multiplier Characterization}\label{sec:multiplier}
In this section, we use the Fenchel's duality theory to characterize the solution of problem \eqref{eq:probvarin} with a vectorial function, which acts as a multiplier. The aim of such a procedure is to obtain an optimality system which will be used to characterize the solutions of \eqref{eq:probvarin}.

For the sake of readability of the paper, let us briefly  describe the main ideas in Fenchel's theory. Let $V$ and $W$ be two Banach spaces with dual spaces $V^*$ and $W^*$, respectively. Let $\Lambda\in \mathcal{L}(V,W)$ be given and let $\Lambda^*\in \mathcal{L}(W^*,V^*)$ be its conjugate functional. Further,  let $\mathcal{F}:V\rightarrow\re{R}$ and $\mathcal{G}:W\rightarrow \re{R}$ be two given functionals. We are concerned with minimization problems in which the objective functional $J:V \rightarrow \re{R}$ can be decomposed as
\[
J(u):= \mathcal{F}(u)+\mathcal{G}(\Lambda u).
\]
In such a case, the problems we are interested in are given by
\begin{equation}\label{fenprimal}
\underset{u\in V}{\inf}\{\mathcal{F}(u)+ \mathcal{G}(\Lambda u)\}.
\end{equation}
Next, it is known that the associated dual problem of \eqref{fenprimal} is given by (see \cite[pp. 60--61]{ektem})
\begin{equation}\label{fendual}
\underset{\mathbf{q}\in W^*}{\sup}\{-\mathcal{F}^*(-\Lambda^* \mathbf{q})- \mathcal{G}^*(\mathbf{q})\}.
\end{equation}
Here, $\mathcal{F}^*:V^*\rightarrow \re{R}$ and $\mathcal{G}^*: W^*\rightarrow \re{R}$ denote the convex conjugate functionals of $\mathcal{F}$ and $\mathcal{G}$, respectively, \textit{i.e,},
\begin{equation*}
\mathcal{F}^*(-\Lambda^* \mathbf{q})=\sup_{v\in V}\left\{\langle -\Lambda^*\mathbf{q}\,,\, v\rangle_{V^*,V} - \mathcal{F}(v) \right\}\,\,\,\mbox{and}\,\,\,
\mathcal{G}^*(\mathbf{q})=\sup_{\mathbf{r}\in W^*} \left\{ \langle \mathbf{q}\,,\,\mathbf{r}\rangle_{W^*,W} - \mathcal{G}(\mathbf{r}) \right\}.
\end{equation*} 
Now, let us suppose that the primal problem \eqref{fenprimal} has a unique solution $\overline{u}\in V$ and that both $\mathcal{F}$ and $\mathcal{G}$ are convex and continuous. Then, \cite[Th. p. 59]{ektem} and \cite[Rem. 4.2, p. 60]{ektem} imply that no duality gap occurs, \textit{i.e.},
\[
\underset{u\in V}{\inf}\{\mathcal{F}(u)+ \mathcal{G}(\Lambda u)\}=\underset{\mathbf{q}\in W^*}{\sup}\{-\mathcal{F}^*(-\Lambda^* \mathbf{q})- \mathcal{G}^*(\mathbf{q})\},
\]
and, moreover, that the dual problem has at least one solution $\overline{\mathbf{q}}\in W^*$. 

Finally, Fenchel's duality theory allows us to characterize both the primal and dual solutions. Indeed, \cite[p. 61]{ektem} implies that $\overline{u}$ and $\overline{\mathbf{q}}$ satisfy the following system of equations
\begin{eqnarray}\label{fensys}
-\Lambda^*\overline{\mathbf{q}} &\in& \partial\mathcal{F}(\overline{u})\label{fensys1}\vspace{0.2cm}\\
\overline{\mathbf{q}}&\in& \partial\mathcal{G}(\nabla \overline{u}),\label{fensys2}
\end{eqnarray}
where $\partial \mathcal{F}(\overline{u})$ and $\partial \mathcal{G}(\nabla \overline{u})$ stand for the subdifferential of $\mathcal{F}$ at $\overline{u}$ and the subdifferential of $\partial \mathcal{G}$ at $\nabla \overline{u}$, respectively.

Let us turn our attention to problem \eqref{eq:probvarin}. First, we define $V:=W_0^{1,p}(\Omega)$, $V^*:=W^{-1,p'}(\Omega)$, $W=\mathbf{L}^{p}(\Omega)$, and we identify the dual space of $\mathbf{L}^p(\Omega)$ with $\mathbf{L}^{p'}(\Omega)$ (see \cite[Th. 4.11]{brezis}). 

Next, we introduce the functionals $\mathcal{F}:W_0^{1,p}(\Omega)\rightarrow \re{R}$ as $\mathcal{F}(u):=\frac{1}{p}\int_\Omega |\nabla u|^p\,dx + \int_\Omega fu\,dx$ and $\mathcal{G}: \mathbf{L}^p(\Omega)\rightarrow \re{R}$ as $\mathcal{G}(\mathbf{q}):=g \int_\Omega |\mathbf{q}|\,dx$. It can be easily verified that these two functionals are convex, continuous and proper. We also introduce the linear operator $\Lambda:W_0^{1,p}\rightarrow \mathbf{L}^p(\Omega)$  by $\Lambda u:= \nabla u$. Clearly, $\Lambda\in \mathcal{L}(W_0^{1,p},\mathbf{L}^p(\Omega))$. Thanks to these definitions, it is clear that problem \eqref{eq:probvarin} satisfy all the requirements of Fenchel's duality theory. Therefore, there exists at least one solution for the dual problemñ. Moreover, the solutions of primal and dual problems $\overline{u}\in W_0^{1,p}(\Omega)$ and $\overline{\mathbf{q}}\in \mathbf{L}^ {p'}(\Omega)$, respectively, satisfy the system \eqref{fensys1}-\eqref{fensys2}.

First, we study \eqref{fensys1}. In this case, since $\mathcal{F}$ is Gateaux differentiable, the subdifferential of $\mathcal{F}$ reduces to the Gateaux differential $\mathcal{F}'$ (see \cite[Prop. 5.3, p. 23]{ektem}). Therefore, \eqref{fensys1} implies that
$
\langle -\Lambda^*\overline{\mathbf{q}}\,,\, v\rangle_{W^{-1,p},W_0^{1,p}} =\langle\mathcal{F}'(\overline{u})\,,\, v\rangle_{W^{-1,p},W_0^{1,p}}, \,\forall v\in W_0^{1,p}(\Omega),
$
which is equivalent to
\[
-\langle \overline{\mathbf{q}}\,,\, \nabla v\rangle_{p',p} =\int_\Omega |\nabla \overline{u}|^{p-2} (\nabla \overline{u},\nabla v)\,dx -\int_\Omega f v\,dx, \,\forall v\in W_0^{1,p}(\Omega).
\]
Now, thanks to the Riesz's representation theorem in $L^p$ spaces (see \cite[Th. 4.11]{brezis}), there exists a unique $\mathbf{w}\in \mathbf{L}^{p'}(\Omega)$ such that $\langle \overline{\mathbf{q}}\,,\, \nabla v\rangle_{p',p} = \int_\Omega (\mathbf{w},\nabla v)\,dx$, which yields that
\begin{equation*}
-\int_\Omega (\mathbf{w},\nabla v)\,dx =\int_\Omega |\nabla \overline{u}|^{p-2} (\nabla \overline{u},\nabla v)\,dx -\int_\Omega f v\,dx=0, \,\forall v\in W_0^{1,p}(\Omega).
\end{equation*}

Next, we analyze \eqref{fensys2}. In this case, since $\mathcal{G}$ is not differentiable, \eqref{fensys2} implies that
\[
\mathcal{G}(\nabla\overline{u}) - \mathcal{G}(\mathbf{r}) \geq \langle \overline{\mathbf{q}}\,,\, \nabla \overline{u} -\mathbf{r}\rangle_{p',p},\,\,\forall \mathbf{r}\in \mathbf{L}^p(\Omega).
\]
By following similar argumentation as in \cite[Sec. 2.1]{dlRGpf}, we conclude that the last expression implies that
\begin{equation*}
\langle\overline{\mathbf{q}}\,,\,\mathbf{r}\rangle_{p',p}\leq g\int_\Omega|\mathbf{r}|\,dx, \forall \mathbf{r}\in \mathbf{L}^p(\Omega)\,\,\mbox{ and }\,\, g\int_\Omega |\nabla \overline{u}|\,dx =\langle\overline{\mathbf{q}}\,,\,\mathbf{r}\rangle_{p',p}.
\end{equation*}
Finally, thanks to \cite[Lem. 2.1]{dlRGpf} and \cite[pp. 85]{dlRGpf}, we obtain the following optimality system for \eqref{eq:probvarin}

\begin{subequations}\label{optsysnoreg}
\begin{equation}\label{optsysnoreg1}
\int_\Omega |\nabla \overline{u}|^{p-2} (\nabla \overline{u},\nabla v)\,dx+ \int_\Omega (\mathbf{w},\nabla v)\,dx  -\int_\Omega f v\,dx=0, \,\forall v\in W_0^{1,p}(\Omega),
\end{equation}

\begin{equation}\label{optsysnoreg2}
|\mathbf{w}(x)|\leq g, \mbox{ a.e. in $\Omega$},
\end{equation}

\begin{equation}\label{optsysnoreg3}
\left\{\begin{array}{lll}
\nabla \overline{u}(x) = 0&\mbox{or}\vspace{0.2cm}\\
\nabla \overline{u}(x) \neq 0 &\mbox{and}\,\, \mathbf{w}(x)=g\frac{\nabla \overline{u}(x)}{|\nabla \overline{u}(x)|}.
\end{array}\right.
\end{equation}
\end{subequations}

\begin{def1}
The active and inactive sets of the problem are defined by
\[
\mathcal{A}:=\{x\in\Omega\,:\, \nabla\overline{u}(x)\neq 0\}\,\mbox{ and }\,
\mathcal{I}:=\{x\in\Omega\,:\, \nabla\overline{u}(x)= 0\},
\]
respectively.
\end{def1}
\subsection{A Huber Regularization Procedure}\label{sec:Huber}
The non-differentiability of problem \eqref{eq:probvarin} can provoke instabilities in several numerical schemes, such as a primal-dual algorithm (see \cite{dlRGpf}). This issue can be appreciated in the fact that system \eqref{optsysnoreg} does not have a unique solution. Further, this lack of regularity prevents us from developing an algorithm based on optimization techniques, as proposed. A classical approach to this kind of problems is regularization. However, the question about what kind of regularization procedure is the most suitable is a hot topic (see \cite{dlRGpf,dlRG2D,Huilgol}). 

In this work, we propose a local regularization of Huber type. The big advantage of using such a procedure is that Huber regularization only changes locally the structure of the functional in \eqref{eq:probvarin}, preserving most of the qualitative properties of functional $J$.

Let us start by introducing, for $\gamma>0$, the function $\psi_\gamma:\re{R}^m\rightarrow\re{R}$ by
\begin{equation}\label{psi}
\psi_\gamma (z):=\left\{
\begin{array}{lll}
g|z|-\frac{g^2}{2\gamma} &\mbox{if $\gamma|z|\geq g$}\vspace{0.2cm}\\
\frac{\gamma}{2} |z|^2&\mbox{if $\gamma|z|< g$}.
\end{array}
\right.
\end{equation}
Note that $\psi_\gamma$ corresponds to a local regularization of the Euclidean norm. In Figure \ref{fig:huber} it is possible to appreciate the effect of this regularization in dimension one.
\begin{figure}
\begin{center}
\includegraphics[width=80mm, height=60mm]{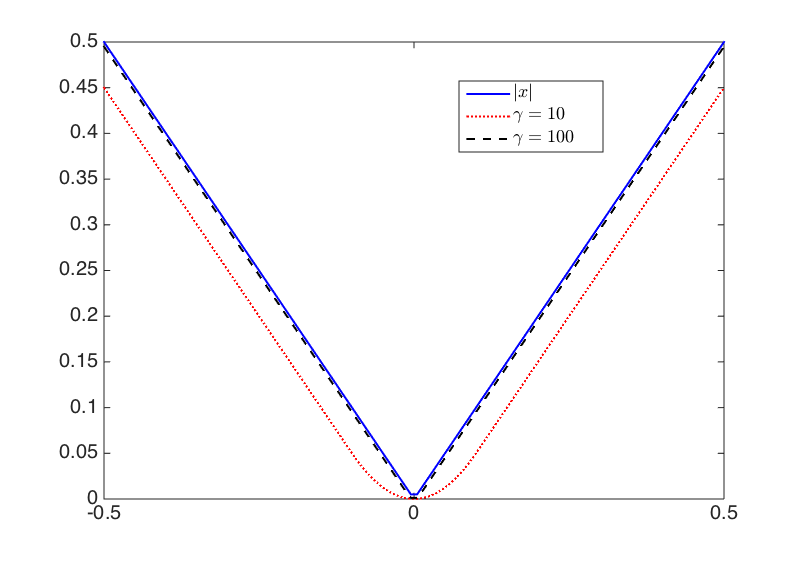}
\end{center}
\caption{Huber regularization in dimension 1.}\label{fig:huber}
\end{figure}

Next, by using the function $\psi_\gamma$, we propose the following regularized version of problem \eqref{eq:probvarin}

\begin{equation}\label{eq:probreg}
\min_{u\in W_0^{1,p}(\Omega)} J_\gamma(u):= \frac{1}{p}\int_\Omega |\nabla u|^p\, dx + \int_\Omega \psi_\gamma (\nabla u)\, dx - \int_\Omega f u\,dx.
\end{equation}
\begin{thm}
Let $1<p<\infty$ and $\gamma>0$. Then, problem \eqref{eq:probreg} has a unique solution $u_\gamma\in W_0^{1,p}(\Omega)$.
\end{thm}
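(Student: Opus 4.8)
The plan is to follow the same direct-method argument used in the proof of Theorem~\ref{th:poborig}: I would establish that $J_\gamma$ is continuous, strictly convex, and coercive on the reflexive Banach space $W_0^{1,p}(\Omega)$, so that the classical results in \cite[Ch.~2]{jahn} and \cite[Ch.~1]{lions} yield existence and uniqueness of a minimizer. The only structural difference with respect to \eqref{eq:probvarin} is the replacement of $g\int_\Omega|\nabla u|\,dx$ by $\int_\Omega \psi_\gamma(\nabla u)\,dx$, so essentially all the work reduces to verifying that this Huber term inherits the good properties of the $L^1$-type term it regularizes.

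First I would record the elementary properties of $\psi_\gamma$. A direct computation at the transition $\gamma|z|=g$ shows that $\psi_\gamma$ is continuously differentiable with gradient $\gamma z$ for $\gamma|z|<g$ and $g\,z/|z|$ otherwise; in particular $\psi_\gamma$ is globally Lipschitz and convex, being (up to the choice $\lambda=1/\gamma$) the Moreau--Yosida regularization of $g|\cdot|$, and it satisfies $0\le \psi_\gamma(z)\le g|z|$. The nonnegativity and the linear growth bound are the two facts I will use below. Convexity and continuity of $z\mapsto\psi_\gamma(z)$ make the integral functional $u\mapsto \int_\Omega \psi_\gamma(\nabla u)\,dx$ convex and weakly lower semicontinuous on $W_0^{1,p}(\Omega)$, while its finiteness and continuity follow from the Hölder estimate $\int_\Omega\psi_\gamma(\nabla u)\,dx\le g\int_\Omega|\nabla u|\,dx\le g\,|\Omega|^{1/p'}\|u\|_{W_0^{1,p}}$.

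With these facts in hand, I would decompose $J_\gamma$ into its three terms. The term $\frac1p\int_\Omega|\nabla u|^p\,dx=\frac1p\|u\|_{W_0^{1,p}}^p$ is continuous and \emph{strictly} convex, because $z\mapsto|z|^p$ is strictly convex for $p>1$ and $\nabla u=\nabla v$ a.e.\ forces $u=v$ in $W_0^{1,p}(\Omega)$; the Huber term is continuous and convex by the previous paragraph; and $u\mapsto-\int_\Omega fu\,dx$ is a bounded linear functional since $f\in L^{p'}(\Omega)$. Hence $J_\gamma$ is continuous and strictly convex, as the sum of one strictly convex and two convex continuous functionals. Coercivity is where the sign condition pays off: since $\psi_\gamma\ge 0$,
\begin{equation*}
J_\gamma(u)\ge \frac1p\|u\|_{W_0^{1,p}}^p-\|f\|_{L^{p'}}\|u\|_{L^p}\ge \frac1p\|u\|_{W_0^{1,p}}^p - C\|f\|_{L^{p'}}\|u\|_{W_0^{1,p}},
\end{equation*}
where the Poincaré inequality was used in the last step, and as $p>1$ the leading $p$-power dominates the linear term, so $J_\gamma(u)\to+\infty$ as $\|u\|_{W_0^{1,p}}\to\infty$.

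I expect the only genuinely nonroutine point to be the verification that the Huber term is convex and generates a well-defined, continuous, weakly lower semicontinuous integral functional; once $\psi_\gamma\ge 0$ and its convexity are in place, coercivity and strict convexity are immediate and the conclusion follows verbatim from the direct method as in Theorem~\ref{th:poborig}. A minor technical check I would not skip is that $\int_\Omega\psi_\gamma(\nabla u)\,dx$ is finite for every $u\in W_0^{1,p}(\Omega)$, which the linear growth bound $\psi_\gamma(z)\le g|z|$ guarantees.
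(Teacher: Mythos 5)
Your proposal is correct and follows essentially the same route as the paper: the paper's proof simply notes that $\psi_\gamma$ is convex, so that $J_\gamma$ inherits the continuity, strict convexity and coercivity of $J$, and then invokes the direct-method argument of Theorem~\ref{th:poborig}. You have merely written out in detail the verifications (the bound $0\le\psi_\gamma(z)\le g|z|$, convexity of the Huber term, coercivity) that the paper leaves implicit.
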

\begin{proof}
First, let us state that function $\psi$ is a convex function \cite{dlRGpf}. Therefore, the functional $J_\gamma(u)$ has the same qualitative properties of functional $J(u)$. Consequently, the result follows in the same way as in Theorem \ref{th:poborig}.\qed
\end{proof}

We again propose the use of Fenchel's duality theory  to generate an optimality system for \eqref{eq:probreg}. Actually, in this case we only need to replace the functional $\mathcal{G}$ by the functional $\mathcal{G}_\gamma: \mathbf{L}^p(\Omega)\rightarrow \re{R}$ given by $\mathcal{G}_\gamma(\mathbf{p})=\int_\Omega \psi_\gamma(\mathbf{p})\,dx$, which is convex and continuous. Therefore, we can use the Fenchel's theory to state that the dual problem has at least one solution $\mathbf{q}_\gamma \in \mathbf{L}^{p'}(\Omega)$, and, moreover, that  $u_\gamma$ and $\mathbf{q}_\gamma$ satisfy the system \eqref{fensys1}-\eqref{fensys2}. 

Since the functional $\mathcal{F}$ has not changed, in this case \eqref{fensys1} reads as follows
\[
-\langle \mathbf{q}_\gamma\,,\, \nabla v\rangle_{p',p} =\int_\Omega |\nabla u_\gamma|^{p-2} (\nabla u_\gamma,\nabla v)\,dx -\int_\Omega f v\,dx, \,\forall v\in W_0^{1,p}(\Omega).
\]
Further, thanks to the Riesz's representation theorem in $L^p$ spaces (see \cite[Th. 4.11]{brezis}), there exists a unique $\mathbf{w}_\gamma\in \mathbf{L}^{p'}(\Omega)$ such that $\langle \mathbf{q}_\gamma\,,\, \nabla v\rangle_{p',p}=\int_\Omega (\mathbf{w}_\gamma,\nabla v)\,dx$. This fact yields that
\begin{equation*}
-\int_\Omega (\mathbf{w}_\gamma,\nabla v)\,dx =\int_\Omega |\nabla u_\gamma|^{p-2} (\nabla u_\gamma,\nabla v)\,dx -\int_\Omega f v\,dx=0, \,\forall v\in W_0^{1,p}(\Omega).
\end{equation*}
On the other hand, the functional $\mathcal{G}_\gamma$ is Gateaux differentiable. Therefore, in this case equation \eqref{fensys2} is given by
\[
\langle \mathbf{q}_\gamma\,,\,\mathbf{r}\rangle_{p',p} = \langle\mathcal{G}'_\gamma(\nabla u_\gamma)\,,\,\mathbf{r}\rangle_{p',p},
\]
which is equivalent to
\[
\langle \mathbf{q}_\gamma\,,\,\mathbf{r}\rangle_{p',p} = \int_\Omega \gamma g\dfrac{(\nabla u_\gamma, \mathbf{r})}{\max(g,\gamma |\nabla u_\gamma|)}\,dx,\,\,\forall \mathbf{r}\in\mathbf{L}^p(\Omega).
\]
Finally, since $\mathbf{w}_\gamma$ is the unique Riesz representative of $\mathbf{q}_\gamma$, we have that
\begin{equation}\label{wgradu}
\int_\Omega (\mathbf{w}_\gamma,\mathbf{r})\,dx=\int_\Omega \gamma g\dfrac{(\nabla u_\gamma, \mathbf{r})}{\max(g,\gamma |\nabla u_\gamma|)}\,dx,\,\,\forall \mathbf{r}\in\mathbf{L}^p(\Omega).
\end{equation}
Summarizing, we have the following regularized optimality system for \eqref{eq:probreg}.
\begin{subequations}\label{optsysreg}
\begin{equation}\label{optsysreg1}
\int_\Omega |\nabla u_\gamma|^{p-2} (\nabla u_\gamma,\nabla v)\,dx + \int_\Omega (\mathbf{w}_\gamma,\nabla v)\,dx-\int_\Omega f v\,dx=0, \,\forall v\in W_0^{1,p}(\Omega).
\end{equation}
\begin{equation}\label{optysreg2}
\mathbf{w}_\gamma(x)=g\gamma \dfrac{\nabla u_\gamma(x)}{\max(g,\gamma|\nabla u_\gamma(x)|)},\,\,\mbox{a.e. in $\Omega$ and $\gamma>0$}.
\end{equation}
\end{subequations}
\begin{def1}
The regularized active and inactive sets are given by
\[
\mathcal{A}_\gamma:=\{x\in\Omega\,:\, \gamma|\nabla u_\gamma(x)|\geq g\}\,\mbox{ and }\,\mathcal{I}_\gamma:=\{x\in\Omega\,:\, \gamma|\nabla u_\gamma(x)|< g\},
\]
respectively.
\end{def1}

\begin{lem}\label{lem:bound}
Let $1<p<\infty$ and $\gamma>0$. Then, the sequence of optima of \eqref{eq:probreg} is bounded in $W_0^{1,p}(\Omega)$.
\end{lem}
\begin{proof}
Let us start by noticing that
\[
\frac{1}{p}\int_\Omega |\nabla u_\gamma|^p\,dx - \int_\Omega fu_\gamma\,dx \leq J_\gamma(u)\leq J_\gamma(0)=0.
\]
Next, H\"older and Poincare inequalities imply the existence of a positive constant $C$, which only depends on $\Omega$ and $p$, such that
\[
\frac{1}{p}\|u_\gamma\|_{W_0^{1,p}}^p \leq C\| f\|_{L^{p'}} \|\nabla u_\gamma\|_{\mathbf{L}^p}.
\]
Since $p>1$, the last expression directly implies the result.\qed
\end{proof}

\begin{thm}
Let $1<p<\infty$. Then, the sequence $\{u_\gamma\}\subset W_0^{1,p}(\Omega)$ converges strongly in $W_0^{1,p}(\Omega)$ to the solution $\overline{u}$ of problem \eqref{eq:probvarin}.
\end{thm}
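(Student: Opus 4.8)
The plan is to send the regularization parameter $\gamma\to\infty$ and argue by a compactness-plus-uniqueness scheme built on an energy comparison between $J_\gamma$ and $J$, closing with a uniform-convexity argument that upgrades weak to strong convergence. First I would record the elementary two-sided pointwise estimate for the Huber function. Examining the two branches of \eqref{psi} one checks that
\[
g|z| - \frac{g^2}{2\gamma} \le \psi_\gamma(z) \le g|z|, \qquad \forall z \in \re{R}^m,
\]
the lower bound on the set $\gamma|z|<g$ being precisely $\tfrac{1}{2\gamma}(\gamma|z|-g)^2\ge 0$. Integrating over $\Omega$ this produces the uniform sandwich
\[
J_\gamma(u) \le J(u) \le J_\gamma(u) + \frac{g^2|\Omega|}{2\gamma}, \qquad \forall u \in W_0^{1,p}(\Omega).
\]

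By Lemma \ref{lem:bound} the family $\{u_\gamma\}$ is bounded in the reflexive space $W_0^{1,p}(\Omega)$, so from any sequence $\gamma_k\to\infty$ I can extract a subsequence (not relabelled) with $u_{\gamma_k}\rightharpoonup u^\ast$ weakly in $W_0^{1,p}(\Omega)$. To identify $u^\ast$ I would combine the minimality of $u_\gamma$ for $J_\gamma$ and of $\overline u$ for $J$ with the sandwich: since $J_\gamma(u_\gamma)\le J_\gamma(\overline u)\le J(\overline u)$ and $J(\overline u)\le J(u_\gamma)\le J_\gamma(u_\gamma)+\tfrac{g^2|\Omega|}{2\gamma}$, one obtains
\[
J(\overline u) \le J(u_\gamma) \le J(\overline u) + \frac{g^2|\Omega|}{2\gamma},
\]
whence $J(u_{\gamma_k})\to J(\overline u)$. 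As $J$ is convex and continuous it is weakly lower semicontinuous, so $J(u^\ast)\le\liminf_k J(u_{\gamma_k})=J(\overline u)$; the uniqueness in Theorem \ref{th:poborig} then forces $u^\ast=\overline u$, and a standard subsequence argument promotes this to $u_\gamma\rightharpoonup\overline u$ for the whole family.

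The decisive step, and the one I expect to be the main obstacle, is upgrading this to strong convergence. Here I would use that the linear term is weakly continuous: since $f\in L^{p'}(\Omega)$, the map $u\mapsto\int_\Omega fu\,dx$ defines an element of $W^{-1,p'}(\Omega)$, so $\int_\Omega f u_{\gamma_k}\,dx\to\int_\Omega f\overline u\,dx$. Inserting this into $J(u_{\gamma_k})\to J(\overline u)$ yields
\[
\frac{1}{p}\|\nabla u_{\gamma_k}\|_{\mathbf{L}^p}^p + g\int_\Omega |\nabla u_{\gamma_k}|\,dx \longrightarrow \frac{1}{p}\|\nabla \overline u\|_{\mathbf{L}^p}^p + g\int_\Omega |\nabla \overline u|\,dx.
\]
Each summand on the left is separately weakly lower semicontinuous, so a short $\liminf$/$\limsup$ splitting argument (adding and subtracting one summand) upgrades convergence of the sum to convergence of each term, in particular $\|\nabla u_{\gamma_k}\|_{\mathbf{L}^p}\to\|\nabla\overline u\|_{\mathbf{L}^p}$.

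Finally, since $\mathbf{L}^p(\Omega)$ is uniformly convex for $1<p<\infty$, the weak convergence $\nabla u_{\gamma_k}\rightharpoonup\nabla\overline u$ together with the convergence of the norms gives, via the Radon--Riesz (Kadec--Klee) property, the strong convergence $\nabla u_{\gamma_k}\to\nabla\overline u$ in $\mathbf{L}^p(\Omega)$, that is $u_{\gamma_k}\to\overline u$ in $W_0^{1,p}(\Omega)$. One last subsequence argument then delivers strong convergence of the entire family $\{u_\gamma\}$. The only genuinely delicate point is this last passage: weak convergence plus energy convergence is not enough in a general Banach space, and it is the uniform convexity of $\mathbf{L}^p$ (Clarkson's inequalities), valid precisely on the stated range $1<p<\infty$, that rescues the argument.
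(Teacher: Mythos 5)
Your argument is correct, but it follows a genuinely different route from the paper. The paper never touches the energy values: it subtracts the two optimality systems \eqref{optsysnoreg1} and \eqref{optsysreg1}, tests with $v=\overline{u}-u_\gamma$, bounds the dual-variable term $(\mathbf{w}_\gamma-\mathbf{w},\nabla(\overline{u}-u_\gamma))$ pointwise on the four sets $\mathcal{A}\cap\mathcal{A}_\gamma$, $\mathcal{A}\cap\mathcal{I}_\gamma$, $\mathcal{A}_\gamma\cap\mathcal{I}$, $\mathcal{I}_\gamma\cap\mathcal{I}$, and then invokes Simon's monotonicity inequalities for the $p$-Laplacian (split into $p\ge 2$ and $1<p<2$, with an extra H\"older step in the second case) to obtain the explicit rate $\|\overline{u}-u_\gamma\|_{W_0^{1,p}}\le C\gamma^{-1/p}$. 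Your proof instead works entirely at the level of the primal functionals: the sandwich $g|z|-\tfrac{g^2}{2\gamma}\le\psi_\gamma(z)\le g|z|$ gives $J(\overline{u})\le J(u_\gamma)\le J(\overline{u})+\tfrac{g^2|\Omega|}{2\gamma}$, weak compactness plus weak lower semicontinuity and uniqueness identify the weak limit, the $\liminf$/$\limsup$ splitting recovers convergence of $\|\nabla u_\gamma\|_{\mathbf{L}^p}$, and Radon--Riesz upgrades to strong convergence. What you gain is economy of means -- you need neither the Fenchel multiplier characterization of Section \ref{sec:multiplier} nor Simon's inequalities, and the argument is uniform in $p$ over the whole range $1<p<\infty$; what you lose is the quantitative content, since the paper's proof delivers the rate $O(\gamma^{-1/p})$ while yours yields convergence without any rate. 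Both proofs rely on Lemma \ref{lem:bound} for boundedness, and all the individual steps you use (the two-sided Huber estimate, weak lower semicontinuity of each convex summand, the splitting trick, and the Kadec--Klee property of the uniformly convex space $\mathbf{L}^p$) are sound as stated.
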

\begin{proof}
Note that $\overline{u}$ and $u_\gamma$ satisfy equations \eqref{optsysnoreg1} and \eqref{optsysreg1}, respectively. Thus, by subtracting \eqref{optsysreg1} from \eqref{optsysnoreg1}, we obtain that
\[\int_\Omega|\nabla \overline{u}|^p (\nabla\overline{u},\nabla v)\,dx - \int_\Omega |\nabla u_\gamma|^p (\nabla u_\gamma, \nabla v)\,dx + \int_\Omega (\mathbf{w},\nabla v)\,dx - \int_\Omega (\mathbf{w}_\gamma,\nabla v)\,dx=0,\,\,\forall v\in W_0^{1,p}(\Omega),\]
which, by choosing $v:=\overline{u}- u_\gamma$, yields that
\begin{equation}\label{u-ug}
\int_\Omega (|\nabla \overline{u}|^p \nabla \overline{u} - |\nabla u_\gamma|^p \nabla u_\gamma , \nabla (\overline{u}- u_\gamma))\,dx = \int_\Omega (\mathbf{w}_\gamma - \mathbf{w},\nabla (\overline{u}- u_\gamma))\,dx,\,\,\forall v\in W_0^{1,p}(\Omega).
\end{equation}
Next, by following \cite[Th. 2.5]{dlRGpf}, we establish the following pointwise bounds for $(\mathbf{w}_\gamma - \mathbf{w},\nabla (\overline{u}- u_\gamma))$ in the four disjoint sets: $\mathcal{A}\cap \mathcal{A}_\gamma$, $\mathcal{A}\cap \mathcal{I}_\gamma$, $\mathcal{A}_\gamma\cap\mathcal{I}$ and $\mathcal{I}_\gamma\cap \mathcal{I}$.

\begin{equation}\label{conv1234}
\begin{array}{lll}
\mathcal{A}\cap \mathcal{A}_\gamma &:&\,((\mathbf{w}_\gamma - \mathbf{w})(x),\nabla (\overline{u}- u_\gamma)(x)) \leq 0.\vspace{0.2cm}\\
\mathcal{A}\cap \mathcal{I}_\gamma &:&\,((\mathbf{w}_\gamma - \mathbf{w})(x),\nabla (\overline{u}- u_\gamma)(x))  < \gamma^{-1} g^2.\vspace{0.2cm}\\
\mathcal{A}_\gamma\cap\mathcal{I} &:&\,((\mathbf{w}_\gamma - \mathbf{w})(x),\nabla (\overline{u}- u_\gamma)(x)) <0.\vspace{0.2cm}\\
\mathcal{I}_\gamma\cap \mathcal{I} &:&\, ((\mathbf{w}_\gamma - \mathbf{w})(x),\nabla (\overline{u}- u_\gamma)(x)) < \gamma^{-1} g^2.
\end{array}
\end{equation}
Since, $\mathcal{A}\cap \mathcal{A}_\gamma$, $\mathcal{A}\cap \mathcal{I}_\gamma$, $\mathcal{A}_\gamma\cap \mathcal{I}$ and $\mathcal{I}_\gamma\cap \mathcal{I}$ provide a disjoint partitioning of $\Omega$, \eqref{u-ug} and the estimates in \eqref{conv1234} imply that
\begin{equation}\label{uug-gamma}
\int_\Omega (|\nabla \overline{u}|^p \nabla \overline{u} - |\nabla u_\gamma|^p \nabla u_\gamma , \nabla (\overline{u}- u_\gamma))\,dx < \int_\Omega \gamma^{-1}\,g^2\,dx.
\end{equation}
Next, we divide the proof in two cases: $p\geq 2$ and $1<p<2$.
\\\\
\noindent $p\geq 2$: In this case, \cite[Lem. 2.1]{Simon1} implies the existence of a positive constant $C_p$, depending on $p$, such that
\[
(|\nabla \overline{u}(x)|^p \nabla \overline{u}(x) - |\nabla u_\gamma (x)|^p \nabla u_\gamma (x), \nabla (\overline{u}- u_\gamma)(x))\geq C_p |\nabla \overline{u}(x)- \nabla u_\gamma(x)|^p,\,\,\mbox{a.e. in $\Omega$}.
\]
Therefore, by plugging the inequality above in \eqref{uug-gamma}, we have that
\[
C_p\int_\Omega |\nabla \overline{u} -\nabla u_\gamma|^p\,dx < \int_\Omega \gamma^{-1}\,g^2\,dx,
\]
which implies that
\begin{equation}\label{ugup>2}
\|\overline{u}-u_\gamma\|_{W_0^{1,p}} < \left(\dfrac{g^2 meas(\Omega)}{C_p\gamma}\right)^{\frac{1}{p}},
\end{equation}
Finally, since $\Omega$ is bounded, \eqref{ugup>2} allows us to conclude that $u_\gamma\rightarrow \overline{u}$ strongly in $W_0^{1,p}(\Omega)$, for $p\geq2$.
\vspace{0.2cm}

\noindent $1<p<2$: In this case  \cite[Lem. 2.1]{Simon1} implies the existence of a positive constant $D_p$, depending on $p$, such that
\[
(|\nabla \overline{u}(x)|^p \nabla \overline{u}(x) - |\nabla u_\gamma (x)|^p \nabla u_\gamma (x), \nabla (\overline{u}- u_\gamma)(x))\geq D_p \dfrac{|\nabla \overline{u}(x)- \nabla u_\gamma(x)|^2}{(|\nabla\overline{u}(x)| + |\nabla u_\gamma (x)|)^{2-p}},\,\,\mbox{a.e. in $\Omega$}.
\]
Thus, if we consider this inequality in \eqref{uug-gamma}, we have that
\begin{equation}\label{1<p<21}
D_p \int_\Omega\dfrac{|\nabla \overline{u}-\nabla u_\gamma|^2}{(|\nabla \overline{u}| + |\nabla u_\gamma|)^{2-p}}\, dx < \int_\Omega \gamma^{-1}\, g^2\,dx.
\end{equation}
On the other hand, note that H\"older's inequality implies that
\[
\begin{array}{lll}
\int_\Omega |\nabla (\overline{u} - u_\gamma)|^p\,dx &=& \int_\Omega\dfrac{ |\nabla (\overline{u} - u_\gamma)|^p}{(|\nabla \overline{u}| + |\nabla u_\gamma|)^{\frac{p(2-p)}{2}}}(|\nabla \overline{u}| + |\nabla u_\gamma|)^{\frac{p(2-p)}{2}}\,dx\vspace{0.2cm}\\&\leq&\left[\int_\Omega \dfrac{|\nabla (\overline{u}-u_\gamma)|^2}{(|\nabla \overline{u}| + |\nabla u_\gamma|)^{2-p}}\,dx\right]^{\frac{p}{2}} \left[\int_\Omega (|\nabla \overline{u}| + |\nabla u_\gamma|)^p\,dx\right]^{\frac{2-p}{2}}.
\end{array}
\]
This last inequality and \eqref{1<p<21} yield that
\[
\left(\int_\Omega |\nabla (\overline{u}-u_\gamma)|^p\,dx\right)^{\frac{1}{p}} \leq \left(\frac{g^2 meas(\Omega)}{D_p \gamma}\right)^{\frac{1}{p}}\left[\int_\Omega (|\nabla \overline{u}| + |\nabla u_\gamma|)^p\,dx\right]^{\frac{2-p}{2p}},
\]
which implies that
\[
\|\overline{u}-u_\gamma\|_{W_0^{1,p}} \leq \left(\frac{g^2 meas(\Omega)}{D_p \gamma}\right)^{\frac{1}{p}}\left[\| \overline{u}\|_{W_0^{1,p}} + \|u_\gamma\|_{W_0^{1,p}}\right]^{\frac{2-p}{2p}}.
\]
Finally, since the sequence $\{u_\gamma\}$ is bounded in $W_0^{1,p}(\Omega)$ (see Lemma \ref{lem:bound}), there exists a positive constant $\widetilde{D}_p$ such that
\begin{equation}\label{ugup<2}
\|\overline{u}-u_\gamma\|_{W_0^{1,p}} \leq \frac{\widetilde{D}_p meas(\Omega)}{\gamma^{\frac{1}{p}}},
\end{equation}
which, since $\Omega$ is bounded, allows us to conclude that $u_\gamma\rightarrow \overline{u}$ strongly in $W_0^{1,p}(\Omega)$.\qed
\end{proof}

\section{Preconditioned Descent Algorithms}\label{sec:preconalgo}
In this section we analyze the application of descent algorithms for solving the regularized problem \eqref{eq:probreg}. We divide this study in two cases: $1<p<2$ and $p\geq 2$. Thus, we need to consider all the particular issues that arise in these two scenarios, such as existence of admissible descent directions.

Descent methods work by finding, at the current iterate $u_k\in V$, a search direction $w_k\in V$ such that  $J_\gamma(u_k + t w_k)$ is decreasing at $t=0$, \textit{i.e.}, such that
\[
\langle J_\gamma'(u_k)\,,\, w_k\rangle_{V^*,V}<0.
\]
Here, $V$ stands for a Banach space and $V^*$ for its dual space. Although this kind of algorithms are usually suitable for differentiable problems, several issues arise. Mainly, the descent provoked in the function can be very small. This problem usually appears when the contour maps of the functional are very prolonged near the minimizer. Further, in the particular case of problem \eqref{eq:probreg}, since this problem involve the $p$-Laplacian operator, the difficulties associated to this structure need to be taken into account (see \cite{Huang}). 

An innovative idea to deal with these issues is to use a suitable preconditioner in the computation of the search direction.  In \cite{Huang}, the authors successfully  implement this idea in a finite dimension setting for the $p$-Laplacian problem. Here, we propose and analyze a similar approach in function spaces, for the regularized problem \eqref{eq:probreg}. In fact, we determine the search direction $w_k$ by solving the following equation
\[
P_k(w_k,v)=-\langle J_\gamma'(u_k)\,,\,v\rangle_{V^*,V},\,\,\forall v\in V,
\]
where $V$ is a suitable Banach space and the form $P_k:V\times V\rightarrow \re{R}$ is chosen as a variational approximation of the $p$-Laplacian operator. 

By taking into account the last discussion, we obtain the following general algorithm.

\begin{algorithm}\label{algo-gen}
Initialize $u_0\in V$ and set $k=0$.  

For $k=1,2,\ldots$ do
\begin{enumerate}
\item If $J_\gamma'(u_k)=0$, STOP.
\item Solve $P_k(w_k,v)=-\langle J_\gamma'(u_k)\,,\,v\rangle_{V^*,V},\,\,\forall v\in V$, for a descent direction $w_k$.
\item Perform a line search algorithm to determine the step size $\alpha_k$.
\item Update $u_{k+1}:= u_k + \alpha_k w_k$ and set $k=k+1$.
\end{enumerate}
\end{algorithm}
Several issues arise when discussing the convergence properties of this algorithm. By considering the discussion in \cite[Sec. 2.2.1]{hinetal}, global convergence of this algorithm depends on the admissibility of $w_k$ and $\alpha_k$. 


Admissibility of search directions $w_k$ depends on the way in which we define $P_k$. Thus, since the behaviour of $J_\gamma$ depends on the value of $p$, existence and admissibility of descent directions will be discuss in the next sections considering the two cases $1<p<2$ and $p>2$, separately. 

On the other hand, the line search strategy in step 3 of Algorithm \ref{algo-gen} can be performed in several ways. Exact line search algorithms, \textit{i.e.}, algorithms which find $\alpha_k$ such that
\[
J_\gamma(u_k+\alpha_k w_k)= \min_{\alpha\geq 0}J_\gamma(u_k+\alpha w_k),
\]
are known to be expensive, specially when the iterate is far from the solution \cite{SunYuan}.  Therefore, we will use inexact line search techniques. Further, in order to proof convergence for descent algorithms like \ref{algo-gen}, these inexact techniques need to be efficient, according to the following definition.
\begin{def1}
A line search strategy is called efficient if there exists a constant $\zeta > 0$, independent of $u_k$ and $w_k$, such that 
\[
J_\gamma (u_k+ \alpha_k)\leq J_\gamma(u_k) - \zeta \left(\frac{\langle J_\gamma'(u_k)\,,\, w_k\rangle_{W^{-1,p'}, W_0^{1,p}}}{\|w_k\|_{W_0^{1,p}}}\right)^2.
\]
\end{def1}

A classical line search strategy is the so called Wolfe-Powell rule. This method consists in accepting a positive steplength $\alpha_k$ if
\begin{subequations}\label{eq:Wolfe}
\begin{equation}\label{eq:Wolfe1}
J_\gamma(u_k+\alpha w_k)\leq J_\gamma(u_k) + \sigma_1 \alpha_k \langle J'_\gamma(u_k)\,,\, w_k\rangle_{V^*,V},
\end{equation}
\begin{equation}\label{eq:Wolfe2}
\langle J'_\gamma(u_k+\alpha w_k)\,,\, w_k\rangle_{V^*,V}\geq \sigma_2\langle J'_\gamma(u_k)\,,\, w_k\rangle_{V^*,V},
\end{equation}
\end{subequations}
where $0<\sigma_1<\sigma_2<1$. Wolfe-Powell rule is known to satisfy the previous efficiency requirements and it will be used as a central requirement in the coming convergence results.

\subsection{The $1<p<2$ case}
In this section, we construct an algorithm, based on Algorithm \ref{algo-gen}, for the problem \eqref{eq:probreg}, when $1<p<2$. Due to the structure of the problem, we will analyze this case in function spaces. Therefore, we discuss the space $V$ in which the algorithm is constructed, define the bilinear form $P_k(\cdot,\cdot)$,  analyze the equation $P_k(w,v)=-\langle J_\gamma(u)\,,\,v\rangle_{V^*,V}$, and, finally, we write the algorithm and prove a global convergence result.

\begin{def1}
Let $1<p<2$, $\epsilon>0$ and $u\in W_0^{1,p}(\Omega)$. We define $H^u_0(\Omega)$ as the completion $\mathcal{D}(\Omega)$ with respect to the norm
\[\|z\|_{H_0^u}=\left(\int_\Omega (\epsilon + |\nabla u|)^{p-2} |\nabla z|^2\,dx \right)^{\frac{1}{2}}.\]
\end{def1}
\begin{thm}\label{teo:hilbert}
Let $1<p<2$, $\epsilon>0$ and $u\in W_0^{1,p}(\Omega)$. Then, $H^u_0(\Omega)$ is a Hilbert space with the inner product
\begin{equation}\label{prodHu}
(z,w)_{H_0^u}=\int_\Omega (\epsilon + |\nabla u|)^{p-2} (\nabla z,\nabla w)\,dx.
\end{equation}
Furthermore, the following inclusion holds, with continuous injections
\begin{equation}\label{spacesp<2}
H_0^1(\Omega)\subset H_0^u(\Omega)\subset W_0^{1,p}(\Omega).
\end{equation}
\end{thm}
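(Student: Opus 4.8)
The plan is to establish the three assertions in sequence: that $(\cdot,\cdot)_{H_0^u}$ defines a genuine inner product, that the resulting normed space is complete (hence Hilbert), and that the two continuous injections in \eqref{spacesp<2} hold. The inner product claim is the routine part: bilinearity and symmetry of \eqref{prodHu} are immediate, and positive-definiteness follows from the weight $(\epsilon+|\nabla u|)^{p-2}$ being strictly positive a.e.\ (since $\epsilon>0$ and $p-2<0$ merely make the weight finite and bounded below on sets where $|\nabla u|$ is bounded), so $(z,z)_{H_0^u}=0$ forces $\nabla z=0$ a.e., and the Poincar\'e inequality on $\mathcal{D}(\Omega)$ then gives $z=0$. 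By construction $H_0^u(\Omega)$ is the completion of $\mathcal{D}(\Omega)$ in the norm $\|\cdot\|_{H_0^u}$, so completeness is automatic once we know the norm comes from this inner product; the only subtlety is to check that the abstract completion can be realized as a space of functions (or of gradients), which is exactly what the injection $H_0^u(\Omega)\subset W_0^{1,p}(\Omega)$ will guarantee.

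The heart of the proof is the chain of continuous injections, and I would prove the two inclusions by direct norm estimates on $\mathcal{D}(\Omega)$, then pass to the completion by density. For the right-hand inclusion $H_0^u(\Omega)\subset W_0^{1,p}(\Omega)$, fix $z\in\mathcal{D}(\Omega)$ and write, using H\"older's inequality with exponents $2/p$ and $2/(2-p)$ (valid since $1<p<2$),
\begin{equation*}
\int_\Omega |\nabla z|^p\,dx = \int_\Omega (\epsilon+|\nabla u|)^{\frac{p(p-2)}{2}}|\nabla z|^p\,(\epsilon+|\nabla u|)^{-\frac{p(p-2)}{2}}\,dx \leq \|z\|_{H_0^u}^{p}\left(\int_\Omega (\epsilon+|\nabla u|)^{p}\,dx\right)^{\frac{2-p}{2}},
\end{equation*}
where the last factor is finite because $u\in W_0^{1,p}(\Omega)$ and the constant $\epsilon$ contributes only $\epsilon^p\,\mathrm{meas}(\Omega)$. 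This shows $\|z\|_{W_0^{1,p}}\leq C\|z\|_{H_0^u}$ with $C$ depending on $\epsilon$, $p$ and $\|u\|_{W_0^{1,p}}$, giving the continuous injection and simultaneously identifying limits of Cauchy sequences in $H_0^u(\Omega)$ as elements of $W_0^{1,p}(\Omega)$.

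For the left-hand inclusion $H_0^1(\Omega)\subset H_0^u(\Omega)$, the key observation is that the weight is bounded above: since $p-2<0$ we have $(\epsilon+|\nabla u|)^{p-2}\leq \epsilon^{p-2}$ a.e., whence for $z\in\mathcal{D}(\Omega)$
\begin{equation*}
\|z\|_{H_0^u}^2=\int_\Omega(\epsilon+|\nabla u|)^{p-2}|\nabla z|^2\,dx\leq \epsilon^{p-2}\int_\Omega|\nabla z|^2\,dx=\epsilon^{p-2}\|z\|_{H_0^1}^2,
\end{equation*}
so the identity extends to a continuous injection $H_0^1(\Omega)\hookrightarrow H_0^u(\Omega)$ by density of $\mathcal{D}(\Omega)$ in $H_0^1(\Omega)$. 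I expect the main obstacle to be the interplay between the abstract completion and the concrete function-space realization: one must argue that a $\|\cdot\|_{H_0^u}$-Cauchy sequence in $\mathcal{D}(\Omega)$ has a well-defined limit whose gradient is the $L^p$-limit guaranteed above, and that this limit is independent of the representing sequence, so that $H_0^u(\Omega)$ embeds injectively into $W_0^{1,p}(\Omega)$ rather than merely mapping into it. The right-hand estimate is precisely what makes this identification rigorous, since it controls the $W_0^{1,p}$-norm by the $H_0^u$-norm uniformly; the two inequalities together close the argument.
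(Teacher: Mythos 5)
Your proof is correct and follows essentially the same route as the paper: the right-hand injection via H\"older's inequality with exponents $2/p$ and $2/(2-p)$ together with $u\in W_0^{1,p}(\Omega)$, and the left-hand injection via the pointwise bound $(\epsilon+|\nabla u|)^{p-2}\leq\epsilon^{p-2}$. The only difference is cosmetic: the paper delegates the well-definedness of the weighted Hilbert space to results of Coffman--Duffin--Mizel and Trudinger (verifying that the weight lies in $L^\infty\subset L^1$ and its reciprocal in $L^1$), whereas you argue completeness directly from the definition as a completion and use the embedding into $W_0^{1,p}(\Omega)$ to realize it as a function space, which is a sound and somewhat more self-contained treatment of the same point.
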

\begin{proof}
Let us start by pointing out that \eqref{prodHu} is a positive definite  bilinear form, which fits the structure analyzed in \cite[p. 214]{coffman} and \cite[pp. 268-269]{Trudinger}. 

Next, we analyze the coefficient $(\epsilon + |\nabla u|)^{p-2}$.  First, note that
\[
(\epsilon + |\nabla u(x)|)^{p-2} = \dfrac{1}{(\epsilon + |\nabla u(x)|)^{2-p}},\,\,\mbox{a.e. in $\Omega$},
\]
which implies, since $2-p>0$, that
\[
\dfrac{1}{(\epsilon + |\nabla u(x)|)^{2-p}}\leq \dfrac{1}{\epsilon^{2-p}},\,\,\mbox{a.e. in $\Omega$}.
\]
The last two expressions yield that
\begin{equation}\label{coef1}
(\epsilon + |\nabla u|)^{p-2}\in L^\infty(\Omega)\subset L^1(\Omega).
\end{equation}
Now, note that
\[
\left[(\epsilon + |\nabla u(x)|)^{p-2} \right]^{-1}= (\epsilon + |\nabla u(x)|)^{2-p},\,\,\mbox{a.e. in $\Omega$}.
\]
Since $u\in W_0^{1,p}(\Omega)$ and $2-p<p$, we  can state that
\begin{equation}\label{coef2}
(\epsilon + |\nabla u|)^{2-p}\in L^1(\Omega).
\end{equation}
Consequently, \eqref{coef1}, \eqref{coef2}, \cite[Lem. 3.3]{coffman} and \cite[p. 268-269]{Trudinger}, yield that the Hilbert space $H^u_0(\Omega)$ is well defined.

We now prove \eqref{spacesp<2}. Let $z\in H_0^1(\Omega)$. First, note that, thanks to \eqref{coef1}, there exists a positive constant $C_1>0$, such that
\[
\int_\Omega (\epsilon + |\nabla u|)^{p-2} |\nabla z|^2\,dx \leq C_1 \int_\Omega |\nabla z|^2\, dx,
\]
which implies the existence of a positive constant $\widetilde{C}_1$ such that
\begin{equation}\label{incspa1}
\|z\|_{H_0^u}\leq \widetilde{C}_1 \|z\|_{H_0^1}.
\end{equation}
Further, let $z\in H_0^u(\Omega)$. H\"older's inequality implies that
\[
\begin{array}{lll}
\int_\Omega |\nabla z|^p\,dx &=& \int_\Omega \dfrac{|\nabla z|^p}{\left(\epsilon + |\nabla u|\right)^{\frac{p(2-p)}{2}}} \left(\epsilon + |\nabla u|\right)^{\frac{p(2-p)}{2}}\,dx\vspace{0.2cm}\\&\leq &\left[\int_\Omega \dfrac{|\nabla z|^2}{(\epsilon + |\nabla u|)^{2-p}}\,dx\right]^{\frac{p}{2}} \left[\int_\Omega (\epsilon + |\nabla u|)^p\,dx\right]^{\frac{2-p}{2p}}.
\end{array}
\]
Next, since $u\in W_0^{1,p}(\Omega)$, the last expression implies the existence of a positive constant $C_2$ such that
\[
\int_\Omega |\nabla z|^p\,dx \leq C_2 \left[\int_\Omega (\epsilon + |\nabla u|)^{p-2}|\nabla z|^2\,dx\right]^{\frac{p}{2}},
\]
which implies the existence of a positive constant $\widetilde{C}_2$ such that
\begin{equation}\label{incspa2}
\|z\|_{W_0^{1,p}}\leq \widetilde{C}_2 \|z\|_{H_0^u}.
\end{equation}
Summarizing, \eqref{incspa1} and \eqref{incspa2} imply that
\[
H_0^1(\Omega)\subset H_0^u(\Omega) \subset W_0^{1,p}(\Omega), 
\]
with continuous injections.\qed
\end{proof}

We propose our algorithm, considering that $V:=H_0^{\hat{u}}(\Omega)$, for some suitable $\hat{u}\in W_0^{1,p}(\Omega)$. Moreover, it looks natural that the form $P_k$ will be defined as follows.
\begin{equation*}
P_k(w,v):=\int_\Omega(\epsilon + |\nabla \hat{u}|)^{p-2} (\nabla w,\nabla v)\,dx.
\end{equation*}
Here the small parameter $\epsilon>0$ helps the algorithm to handle possible degeneracy when $\nabla \hat{u}=0$. Note that $P_k$ is a linearization of the weak form $\int_\Omega |\nabla \hat{u}|^{p-2} (\nabla \hat{u},\nabla v)$. 

Next, note that $H_0^{\hat{u}}(\Omega)\subset W_0^{1,p}(\Omega)$, for all $\hat{u}\in W_0^{1,p}(\Omega)$. Next, we define by $\widehat{J}'_\gamma(\hat{u})$ the restriction of $J'_\gamma(\hat{u})$ to $H_0^{\hat{u}}(\Omega)$. Therefore, we can state that $\widehat{J}'_\gamma(\hat{u})\in H_0^{\hat{u}}(\Omega)^*$ and that
\begin{equation}\label{eq:dualeseq}
\langle \widehat{J}'_\gamma(u)\,,\,v\rangle_{{H_0^{\hat{u}}}^*,H_0^{\hat{u}}}= \langle J_\gamma'(u)\,,\,v\rangle_{W^{-1,p'},W_0^{1,p}},\,\,\forall v\in H_0^{\hat{u}}(\Omega).
\end{equation}
For further details, we refer the reader to \cite[Rem. 3, p. 136]{brezis}.

Summarizing, we need to analyze the following variational equation
\begin{equation}\label{Pk<2}
\int_\Omega(\epsilon + |\nabla \hat{u}|)^{p-2} (\nabla w,\nabla v)\,dx =-\langle \widehat{J}'_\gamma(\hat{u})\,,\,v\rangle_{{H_0^{\hat{u}}}^*,H_0^u}, \,\,\forall v\in H_0^{\hat{u}}(\Omega).
\end{equation}
It is clear that a solution for a similar equation will play the role of the descent direction in our Algorithm. Therefore, we need to prove that this equation has, at least, one solution in $H_0^{\hat{u}}(\Omega)$.

This existence result is a direct consequence of the Riesz-Fr\'echet representation theorem (see \cite[Th. 5.5]{brezis}). In fact, we know that $H_0^{\hat{u}}(\Omega)\subset W_0^{1,p}(\Omega)$ is a Hilbert space for $1<p<2$. Moreover, we know that $\int_\Omega(\epsilon + |\nabla \hat{u}|)^{p-2} (\nabla w,\nabla v)\,dx$ is the scalar product of this Hilbert space. Consequently, the Riesz-Fr\'echet representation theorem implies the existence of a unique $w\in H_0^{\hat{u}}(\Omega)$ such that 
\[
\int_\Omega (\epsilon + |\nabla \hat{u}|)^{p-2} (\nabla w,\nabla v)\, dx = -\langle \widehat{J}'_\gamma(\hat{u})\,,\,v\rangle_{{H_0^{\hat{u}}}^*,H_0^u},\,\,\,\forall v\in H_0^{\hat{u}}(\Omega).
\]

Summarizing, the Algorithm \ref{algo-gen} takes the following form for $1<p<2$.

\begin{algorithm}\label{algo1<p<2}
Initialize $u_0\in W_0^{1,p}(\Omega)$ and set $k=0$.  

For $k=1,2,\ldots$ do
\begin{enumerate}
\item If $J'_\gamma(u_k)=0$, STOP.
\item Find a descent direction $w_k\in H^{u_k}_0(\Omega)$ by solving the following variational equation
\begin{equation}\label{eq:wp<2algo}
\begin{array}{lll}
\int_\Omega (\epsilon + |\nabla u_k|)^{p-2}(\nabla w_k,\nabla v)\,dx =-\langle \widehat{J}'_\gamma(u_k)\,,\, v\rangle_{H_0^{u_k*},H_0^{u_k}} \vspace{0.2cm}\\= -\int_\Omega |\nabla u_k|^{p-2}(\nabla u_k,\nabla v)\,dx -g\gamma \int_\Omega \frac{(\nabla u_k,\nabla v)}{\max(g,\gamma|\nabla u_k|)}\, dx +\int_\Omega f\,v\,dx,\,\,\forall v\in H_0^{u_k}(\Omega).
\end{array}
\end{equation} 

\item Perform an efficient line search technique to obtain $\alpha_k$.
\item Update $u_{k+1}:= u_k + \alpha_k w_k\in W_0^{1,p}(\Omega)$ and set $k=k+1$.
\end{enumerate}
\end{algorithm}

Clearly, the equation \eqref{eq:wp<2algo} has a unique solution $w_k\in H_0^{u_k}(\Omega) \subset W_0^{1,p}(\Omega)$, for all $k\in\re{N}$. Thus, Algorithm \ref{algo1<p<2} is well defined. However, it is mandatory to prove that $w_k\in W_0^{1,p}(\Omega)$ is, indeed, an admissible descent direction. First, we prove that $w_k$ is a descent direction. In fact, note that from \eqref{eq:dualeseq} and \eqref{eq:wp<2algo}, we obtain that
\begin{equation*}
\begin{array}{lll}
-\langle J_\gamma'(u)\,,\,w_k\rangle_{W^{-1,p'},W_0^{1,p}}&=& -\langle \widehat{J}'_\gamma(u)\,,\,w_k\rangle_{{H_0^{\hat{u}}}^*,H_0^{\hat{u}}}\vspace{0.2cm}\\&= &\int_\Omega (\epsilon + |\nabla u_k|)^{p-2} |\nabla w_k|^2\,dx\vspace{0.2cm} \\&=& \|w_k\|_{H_0^{u_k}}^2,
\end{array}
\end{equation*}
which yields that
\begin{equation}\label{eq:wdes}
\langle J_\gamma'(u)\,,\,w_k\rangle_{W^{-1,p'},W_0^{1,p}}<0.
\end{equation}

Next, let us discuss the admissibility of $w_k$. Note that if we had defined $P_k$ as the variational version of the Laplacian operator, \textit{i.e.}, $P_k(u,v)=\int_\Omega (\nabla u\,,\,\nabla v)\,dx$, the sequence generated by the associated version of the Algorithm \ref{algo1<p<2} would be such that $\{u_k\}\subset H^1_0(\Omega)\subset W_0^{1,p}(\Omega)$. In this case, it is possible to state the existence of $q<2$ such that $H_0^1(\Omega)\subset W_0^{1,q}(\Omega)\subset W_0^{1,p}(\Omega)$ (see \cite{brezis,Triebel}). On the other hand, note that the sequence $\{u_\ell\}$ generated by Algorithm \ref{algo1<p<2} yields that $\{u_\ell\}\in \cup_{j\in \re{N}} H_0^{u_j}(\Omega)\subset W_0^{1,p}(\Omega)$.  These arguments suggest that, using interpolation theory \cite{Triebel}, a similar inclusion result can be obtain for $P_k(w,v):=\int_\Omega(\epsilon + |\nabla \hat{u}|)^{p-2} (\nabla w,\nabla v)\,dx$. Thus, we make the following assumption.

\begin{assu}\label{assu:q}
There exists $q$, $1<p<q<2$, such that $\cup_{j\in \re{N}} H_0^{u_j}(\Omega)\subset W_0^{1,q}(\Omega)\subset W_0^{1,p}(\Omega)$.
\end{assu}

\begin{prop}\label{th:zout}
Let $\{u_k\}$ be the sequence generated by Algorithm \ref{algo1<p<2} and suppose that the step length $\alpha_k$ satisfies the Wolfe-Powell conditions \eqref{eq:Wolfe}. Furthermore, let us suppose that the Assumption \ref{assu:q} holds. Then, the Zoutendijk condition is verified, \textit{i.e.},
\begin{equation}\label{eq:zout}
\sum_{k=0}^{\infty}\cos^2 \phi_k= \infty,
\end{equation}
\end{prop}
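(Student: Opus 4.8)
The plan is to establish \eqref{eq:zout} by showing that $\cos\phi_k$ stays bounded away from zero by a constant that is \emph{uniform} in $k$: once $\cos\phi_k\ge c>0$ for every $k$, the series $\sum_k\cos^2\phi_k$ dominates $\sum_k c^2$ and diverges. The starting point is the Riesz characterization of the search direction. Since $w_k$ solves \eqref{eq:wp<2algo}, it is exactly the representative of $-\widehat J_\gamma'(u_k)$ in the inner product \eqref{prodHu}, so the computation leading to \eqref{eq:wdes} sharpens to the identity $-\langle J_\gamma'(u_k),w_k\rangle_{W^{-1,p'},W_0^{1,p}}=\|w_k\|_{H_0^{u_k}}^2$. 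Inserting this into the quotient that defines $\cos\phi_k$ (angle between $w_k$ and $-J_\gamma'(u_k)$, the gradient measured in the preconditioner dual norm) collapses it to the ratio $\cos\phi_k=\|w_k\|_{H_0^{u_k}}\big/\|w_k\|_{W_0^{1,p}}$. Thus the whole question reduces to bounding $\|w_k\|_{W_0^{1,p}}$ by $\|w_k\|_{H_0^{u_k}}$ with a $k$-independent constant.

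First I would secure boundedness of the iterates, the ingredient that ultimately makes that constant uniform. Because $\langle J_\gamma'(u_k),w_k\rangle<0$ by \eqref{eq:wdes}, the Armijo part \eqref{eq:Wolfe1} of the Wolfe--Powell rule forces $J_\gamma(u_{k+1})<J_\gamma(u_k)$; hence $\{J_\gamma(u_k)\}$ is nonincreasing and $J_\gamma(u_k)\le J_\gamma(u_0)$ for all $k$. The coercivity of $J_\gamma$ (proved in the well-posedness result for \eqref{eq:probreg}, exactly as for Theorem \ref{th:poborig}) then confines $\{u_k\}$ to a bounded subset of $W_0^{1,p}(\Omega)$, which in particular controls $\int_\Omega(\epsilon+|\nabla u_k|)^p\,dx$ uniformly in $k$.

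The core of the argument, and the step I expect to be the main obstacle, is converting the continuous injection $H_0^{u_k}(\Omega)\subset W_0^{1,p}(\Omega)$ of Theorem \ref{teo:hilbert} into an estimate whose constant does not depend on $k$. Inequality \eqref{incspa2} furnishes $\|w_k\|_{W_0^{1,p}}\le\widetilde C_2\|w_k\|_{H_0^{u_k}}$, but there $\widetilde C_2$ is built from $\int_\Omega(\epsilon+|\nabla u_k|)^p\,dx$ and, worse, the underlying Hilbert space $H_0^{u_k}(\Omega)$ itself varies with the iterate; a naive bound therefore yields a step-dependent constant. This is precisely the difficulty that Assumption \ref{assu:q} is designed to remove: the common embedding $\cup_{j}H_0^{u_j}(\Omega)\subset W_0^{1,q}(\Omega)\subset W_0^{1,p}(\Omega)$ with $p<q<2$ supplies a single intermediate space through which all the directions $w_k$ factor, and, together with the uniform bound on $\{u_k\}$ from the previous paragraph, it lets me replace $\widetilde C_2$ by one constant $C$ valid for every $k$. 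Consequently $\cos\phi_k=\|w_k\|_{H_0^{u_k}}/\|w_k\|_{W_0^{1,p}}\ge 1/C>0$ uniformly, and summing this constant lower bound over $k=0,1,2,\dots$ yields \eqref{eq:zout}. The delicate point throughout is justifying the uniformity of the embedding constant across the moving family $\{H_0^{u_k}(\Omega)\}$, for which the interpolation-type content of Assumption \ref{assu:q} and the Hölder-continuity estimates of \cite[Lem. 2.1]{Simon1} for the regime $1<p<2$ are the essential tools.
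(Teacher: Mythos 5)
Your reduction contains a genuine gap at its very first step: the identity $\cos\phi_k=\|w_k\|_{H_0^{u_k}}/\|w_k\|_{W_0^{1,p}}$ is not correct for the quantity defined in the proposition. From \eqref{eq:wp<2algo} you correctly get $-\langle J_\gamma'(u_k),w_k\rangle_{W^{-1,p'},W_0^{1,p}}=\|w_k\|_{H_0^{u_k}}^2$, but the denominator of $\cos\phi_k$ is $\|J_\gamma'(u_k)\|_{W^{-1,p'}}\,\|w_k\|_{W_0^{1,p}}$, and the Riesz representation only gives $\|w_k\|_{H_0^{u_k}}=\|\widehat J_\gamma'(u_k)\|_{(H_0^{u_k})^*}$, the dual norm of the \emph{restriction} to $H_0^{u_k}(\Omega)$, not $\|J_\gamma'(u_k)\|_{W^{-1,p'}}$. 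Since Theorem \ref{teo:hilbert} gives $H_0^{u_k}(\Omega)\subset W_0^{1,p}(\Omega)$ (for $1<p<2$ the $H_0^{u_k}$-norm is the stronger one), the comparison between these dual norms goes the wrong way: one only has $\|\widehat J_\gamma'(u_k)\|_{(H_0^{u_k})^*}\le \widetilde C_2\|J_\gamma'(u_k)\|_{W^{-1,p'}}$, so that
\[
\cos\phi_k=\frac{\|w_k\|_{H_0^{u_k}}^2}{\|J_\gamma'(u_k)\|_{W^{-1,p'}}\|w_k\|_{W_0^{1,p}}}
\ \ge\ \frac{1}{\widetilde C_2}\cdot\frac{\|w_k\|_{H_0^{u_k}}}{\|J_\gamma'(u_k)\|_{W^{-1,p'}}},
\]
and the remaining ratio requires the reverse estimate $\|J_\gamma'(u_k)\|_{W^{-1,p'}}\le C\,\|\widehat J_\gamma'(u_k)\|_{(H_0^{u_k})^*}$, i.e.\ essentially $\|v\|_{H_0^{u_k}}\le C\|v\|_{W_0^{1,p}}$, which is exactly the inclusion that fails for $1<p<2$ (it would mean controlling $\int_\Omega(\epsilon+|\nabla u_k|)^{p-2}|\nabla v|^2\,dx$ by $\|\nabla v\|_{\mathbf L^p}^2$ with $p<2$). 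Neither Assumption \ref{assu:q} nor the uniform bound on $\{u_k\}$ (which you do establish correctly via the Armijo decrease and coercivity) supplies this. Note also that the uniform bound $\cos\phi_k\ge c>0$ you aim for is strictly stronger than \eqref{eq:zout} — it is a uniform angle condition — so even the target is more than what is claimed.

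The paper takes an entirely different route: it never estimates $\cos\phi_k$, but instead verifies the hypotheses of the abstract Zoutendijk theorem (\cite[Lem. 2.5.6]{SunYuan}, \cite[pp. 29]{kanzow}): $J_\gamma$ is bounded below and continuously differentiable, and $J_\gamma'$ is Lipschitz continuous with values in $W^{-1,p'}(\Omega)$ — this last point is where Assumption \ref{assu:q} actually enters, through the slant differentiability of the $\max$ term and \cite[Th. 2.6]{chenqi} — and then concludes \eqref{eq:zout} from the Wolfe--Powell step rule. To salvage your strategy you would have to either redefine $\phi_k$ using the preconditioned dual norm $\|\cdot\|_{(H_0^{u_k})^*}$ in the denominator (which changes the statement) or prove the missing two-sided norm equivalence, neither of which is available here.
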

where $\cos \phi_k =-\frac{\langle J_\gamma'(u_k)\,,\, w_k\rangle_{W^{-1,p'}, W_0^{1,p}}}{\|J'_\gamma(u_k)\|_{W^{-1,p'}}\|w_k\|_{W_0^{1,p}}}$.
\begin{proof}
First, note that Theorem \ref{th:poborig} implies that the functional $J_\gamma$ is bounded below in $W_0^{1,p}(\Omega)$. Next, let us recall that the functional $J_\gamma$ can be written as
\[
J_\gamma(u)= \mathcal{F}(u) + \mathcal{G}_\gamma(\nabla u),
\]
where $\mathcal{F}(u)$ and $\mathcal{G}_\gamma(\nabla u)$ are given in Section \ref{sec:Huber}. It was previously stated that both $\mathcal{F}$ and $\mathcal{G}_\gamma$ are continuously differentiable in $W_0^{1,p}(\Omega)$. Moreover, it is known that $\mathcal{F}$ is actually twice differentiable, since this functional represents the variational version of the Dirichlet problem for the $p$-Laplacian operator (see \cite{bermejomg,glomaro}). Thus, $\mathcal{F}$ has a Lipschitz continuous gradient in $W^{-1,p'}(\Omega)$. On the other hand, in Section \ref{sec:Huber} we stated that
\[
\langle \mathcal{G}'_\gamma(\nabla u)\,,\, v\rangle_{W^{-1,p'}, W_0^{1,p}}= g\,\gamma\int_\Omega \frac{(\nabla u\,,\,\nabla v)}{\max (g,\gamma|\nabla u|)}\,dx.
\]
Next, thanks to the Assumption \ref{assu:q}, the max function involved in the last expression is slantly differentiable (see \cite{hik1}). Consequently, we can state that $\mathcal{G}'_\gamma(\nabla u)$ is slantly differentiable in $W_0^{1,p}(\Omega)$. Therefore, thanks to \cite[Th. 2.6, pp. 1205]{chenqi}, $\mathcal{G}'_\gamma$ is Lipschitz continuous in $W^{-1,p'}(\Omega)$.

Summarizing, we know that $J_\gamma$ is bounded below and continuously differentiable in $ W_0^{1,p}(\Omega)$, and its gradient is Lipschitz continuous in $W^{-1,p'}(\Omega)$. Therefore, since we assume that $\alpha_k$ satisfies the Wolfe-Powell conditions, all the hypothesis of Zoutendijk theorem are satisfied (see, for instance, \cite[pp. 29]{kanzow} and \cite[Lem. 2.5.6]{SunYuan} and the references therein ), and, consequently \eqref{eq:zout} holds.\qed
\end{proof}

\begin{thm}\label{th:convp<2}
Let $\{u_k\}$ be the sequence generated by Algorithm \ref{algo1<p<2} and suppose that the step length $\alpha_k$ satisfies the Wolfe-Powell conditions \eqref{eq:Wolfe}. Furthermore, let us suppose that the Assumption \ref{assu:q} holds. Then, the sequence $\{u_k\}$ converges to the uniquely determined global minimum of $J_\gamma$.
\end{thm}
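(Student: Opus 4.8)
The plan is to run the classical Zoutendijk global-convergence scheme for descent methods, but in the function-space setting, leaning on the strict convexity and coercivity of $J_\gamma$ and on the estimate already packaged in Proposition~\ref{th:zout}. First I would record the two structural facts that drive everything. Since $w_k$ solves \eqref{eq:wp<2algo}, the computation leading to \eqref{eq:wdes} gives $-\langle J_\gamma'(u_k),w_k\rangle_{W^{-1,p'},W_0^{1,p}}=\|w_k\|_{H_0^{u_k}}^2>0$, so each $w_k$ is a genuine descent direction and the Wolfe--Powell condition \eqref{eq:Wolfe1} makes $\{J_\gamma(u_k)\}$ strictly decreasing. Because $J_\gamma$ is coercive on $W_0^{1,p}(\Omega)$ (the term $\tfrac1p\|u\|_{W_0^{1,p}}^p$ dominates after a Poincar\'e--H\"older bound on $\int_\Omega fu\,dx$, while $\psi_\gamma\ge0$) and bounded below, $J_\gamma(u_k)\downarrow J^\ast$ for some $J^\ast\ge J_\gamma(u_\gamma)$, and the bound $J_\gamma(u_k)\le J_\gamma(u_0)$ keeps $\{u_k\}$ bounded in the reflexive space $W_0^{1,p}(\Omega)$.

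Next I would extract vanishing gradients. The proof of Proposition~\ref{th:zout} verifies the hypotheses of Zoutendijk's theorem (bounded below, Lipschitz gradient, Wolfe steps), hence $\sum_k\cos^2\phi_k\,\|J_\gamma'(u_k)\|_{W^{-1,p'}}^2<\infty$; combined with the divergence \eqref{eq:zout}, a short contradiction argument (if $\|J_\gamma'(u_k)\|_{W^{-1,p'}}$ were bounded below for all large $k$ the two sums would be incompatible) yields $\liminf_k\|J_\gamma'(u_k)\|_{W^{-1,p'}}=0$. Passing to a subsequence with $\|J_\gamma'(u_{k_j})\|_{W^{-1,p'}}\to0$, convexity of $J_\gamma$ together with $J_\gamma'(u_\gamma)=0$ gives
\[
0\le J_\gamma(u_{k_j})-J_\gamma(u_\gamma)\le\langle J_\gamma'(u_{k_j}),u_{k_j}-u_\gamma\rangle_{W^{-1,p'},W_0^{1,p}}\le\|J_\gamma'(u_{k_j})\|_{W^{-1,p'}}\,\|u_{k_j}-u_\gamma\|_{W_0^{1,p}}\longrightarrow0,
\]
where I used boundedness of $\{u_k\}$. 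Since $\{J_\gamma(u_k)\}$ is monotone and convergent, this forces $J^\ast=J_\gamma(u_\gamma)$, i.e. the values reach the global minimum.

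Finally I would upgrade value-convergence to strong convergence of the iterates. Boundedness in the reflexive space plus weak lower semicontinuity of the convex, continuous functional $J_\gamma$ force every weak cluster point of $\{u_k\}$ to minimize $J_\gamma$, hence to equal $u_\gamma$ by strict convexity; thus $u_k\rightharpoonup u_\gamma$. To pass to strong convergence I would exploit $J_\gamma(u_k)\to J_\gamma(u_\gamma)$: the linear term $\int_\Omega fu_k\,dx$ passes to the limit along $u_k\rightharpoonup u_\gamma$ and the Huber term $\int_\Omega\psi_\gamma(\nabla\cdot)\,dx$ is weakly lower semicontinuous, which pins down $\limsup_k\tfrac1p\|u_k\|_{W_0^{1,p}}^p\le\tfrac1p\|u_\gamma\|_{W_0^{1,p}}^p$; together with weak lower semicontinuity of the norm this gives $\|u_k\|_{W_0^{1,p}}\to\|u_\gamma\|_{W_0^{1,p}}$. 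Since $u\mapsto\nabla u$ embeds $W_0^{1,p}(\Omega)$ isometrically into the uniformly convex space $\mathbf{L}^p(\Omega)$ for $1<p<2$, the Radon--Riesz (Kadec--Klee) property then turns weak convergence plus norm convergence into $u_k\to u_\gamma$ strongly.

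The step I expect to be the main obstacle is this last weak-to-strong upgrade: for $1<p<2$ the $p$-energy is not uniformly convex in the naive quadratic sense, so the argument must route through the uniform convexity of $\mathbf{L}^p(\Omega)$ (or, alternatively, through the degenerate Simon-type monotonicity inequality already used to prove $u_\gamma\to\overline u$, which bounds $\|u_{k_j}-u_\gamma\|_{W_0^{1,p}}$ directly on the subsequence), and one must check carefully that the Huber term does not corrupt the norm-convergence bookkeeping. By contrast the Zoutendijk step is comparatively safe, since Proposition~\ref{th:zout} and Assumption~\ref{assu:q}, via the continuous injections of Theorem~\ref{teo:hilbert}, are precisely what guarantee that the preconditioned directions remain gradient-related uniformly in $k$.
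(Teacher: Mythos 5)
Your argument is correct, but it follows a genuinely different route from the paper. The paper's proof is three lines: it asserts that Hanner's inequality together with the convexity of $\psi_\gamma$ makes $J_\gamma$ uniformly convex on $W_0^{1,p}(\Omega)$, invokes Proposition~\ref{th:zout} for the Zoutendijk condition, and then cites a textbook theorem (Geiger--Kanzow, Th.~4.7) as a black box to conclude convergence of the iterates to the unique minimizer. You instead unwind what that citation hides: Zoutendijk's estimate $\sum_k\cos^2\phi_k\|J_\gamma'(u_k)\|^2<\infty$ against the divergence \eqref{eq:zout} to get $\liminf_k\|J_\gamma'(u_k)\|_{W^{-1,p'}}=0$, the convexity inequality to convert vanishing gradients into convergence of the (monotone) values to the minimum, weak lower semicontinuity plus strict convexity to identify the unique weak cluster point, and finally the Radon--Riesz property of the uniformly convex space $\mathbf{L}^p(\Omega)$ to upgrade weak convergence plus norm convergence of $\nabla u_k$ to strong convergence in $W_0^{1,p}(\Omega)$. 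What your version buys is that it is self-contained in the infinite-dimensional setting and does not need $J_\gamma$ to be uniformly (or strongly) convex in the quadratic sense that the cited finite-dimensional theorem presupposes --- a point where the paper's appeal to Hanner's inequality is arguably the weakest link for $1<p<2$; what the paper's version buys is brevity. Both arguments rest equally on Proposition~\ref{th:zout} (hence on Assumption~\ref{assu:q} and the Lipschitz-gradient claim proved there), so neither is more robust at that particular joint.
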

\begin{proof}
First, note that the Hanner's inequality \cite{liebloss} and the convexity of function $\psi$ imply that $J_\gamma$ is a uniformly convex functional in $W_0^{1,p}(\Omega)$. Further, Proposition \ref{th:zout}  guarantees that the  Zoutendijk condition holds. Therefore, the result directly follows from \cite[Th. 4.7]{kanzow}.\qed
\end{proof}

\subsection{The $p>2$ case}\label{sec:algop>2}
In this section, we construct an algorithm, based on Algorithm \ref{algo-gen}, for a discrete approximation of the problem \eqref{eq:probreg}, when $p\geq 2$. Our first aim was to construct an algorithm in function spaces. However, the structure of the problem prevents us from this goal. Particularly, there are regularity issues regarding the search direction. Indeed, we have the following result.
 
\begin{thm}\label{th:existencep>2}
Let $p\geq 2$ and $\varphi\in W^{-1,p'}(\Omega)$. Then, the variational equation
\begin{equation}\label{eq:wp>2}
\int_\Omega (\nabla w,\nabla v)\,dx = \langle \varphi\,,\, v\rangle_{W^{-1,p'},W_0^{1,p}},\,\,\forall v\in W_0^{1,p}(\Omega)
\end{equation}
has a unique solution $w\in W_0^{1,p'}(\Omega)$. Furthermore, there exists $K>0$ such that
\begin{equation}\label{equivp>2}
K \|w\|_{W_0^{1,p}}\leq \|\varphi\|_{W^{-1,p'}}\leq \|w\|_{W_0^{1,p}}.
\end{equation}
\end{thm}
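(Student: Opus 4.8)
The plan is to recognize the variational equation \eqref{eq:wp>2} as the weak form of the Dirichlet problem $-\Delta w=\varphi$, read as an identity in the dual space $W^{-1,p'}(\Omega)=(W_0^{1,p}(\Omega))^*$, and then to invoke the $L^{s}$-regularity (isomorphism) theory for the Laplacian with $s=p'$. First I would check that the Laplacian is a well-defined bounded operator $-\Delta\colon W_0^{1,p'}(\Omega)\to W^{-1,p'}(\Omega)$: for $w\in W_0^{1,p'}(\Omega)$ and $v\in W_0^{1,p}(\Omega)$ the integrand $(\nabla w,\nabla v)$ is integrable by H\"older's inequality (since $1/p+1/p'=1$), so $v\mapsto\int_\Omega(\nabla w,\nabla v)\,dx$ indeed defines an element of $(W_0^{1,p}(\Omega))^*=W^{-1,p'}(\Omega)$, and equation \eqref{eq:wp>2} states precisely that $-\Delta w=\varphi$ in $W^{-1,p'}(\Omega)$.

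For existence and uniqueness I would use that $-\Delta\colon W_0^{1,s}(\Omega)\to W^{-1,s}(\Omega)$ is a topological isomorphism for every $1<s<\infty$ on a sufficiently regular bounded domain (Simader-type $L^{s}$-theory for the Dirichlet Laplacian). Applying this with $s=p'$ yields a unique $w=(-\Delta)^{-1}\varphi\in W_0^{1,p'}(\Omega)$ solving \eqref{eq:wp>2}. An attractive, essentially self-contained route to the case $s=p'$ is duality: the classical $L^{p}$-theory gives the isomorphism $-\Delta\colon W_0^{1,p}(\Omega)\to W^{-1,p}(\Omega)$, and since these spaces are reflexive and $-\Delta$ is formally self-adjoint, its Banach-space adjoint is again $-\Delta$, now acting $W_0^{1,p'}(\Omega)\to W^{-1,p'}(\Omega)$; the adjoint of an isomorphism is an isomorphism, which is exactly what is needed.

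The two-sided estimate then splits into a trivial half and a substantive half. The upper bound is immediate from the definition of the dual norm and H\"older's inequality: for every $v$ with $\|v\|_{W_0^{1,p}}\le 1$ one has $\langle\varphi,v\rangle_{W^{-1,p'},W_0^{1,p}}=\int_\Omega(\nabla w,\nabla v)\,dx\le\|\nabla w\|_{\mathbf{L}^{p'}}\|\nabla v\|_{\mathbf{L}^{p}}$, whence $\|\varphi\|_{W^{-1,p'}}\le\|w\|_{W_0^{1,p'}}$ with constant one. The lower bound, $K\|w\|_{W_0^{1,p'}}\le\|\varphi\|_{W^{-1,p'}}$, is precisely the continuity of the inverse $(-\Delta)^{-1}$ supplied by the isomorphism, i.e. the a priori estimate $\|w\|_{W_0^{1,p'}}\le K^{-1}\|\Delta w\|_{W^{-1,p'}}$, with $K^{-1}$ the operator norm of $(-\Delta)^{-1}$.

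I expect the heart of the difficulty to be this lower bound, and more fundamentally the fact that, for $p>2$, the datum $\varphi$ lives in the strictly larger dual $W^{-1,p'}(\Omega)\supsetneq H^{-1}(\Omega)$; consequently the naive Hilbert-space argument (Lax--Milgram or Riesz--Fr\'echet in $H_0^1(\Omega)$) produces a solution only for $\varphi\in H^{-1}(\Omega)$ and cannot cover the full range. The genuinely nontrivial ingredient is therefore the $L^{p'}$ Calder\'on--Zygmund / Simader regularity estimate for the Laplacian, which requires adequate smoothness of $\partial\Omega$; it is this estimate that simultaneously places $w$ in $W_0^{1,p'}(\Omega)$ and furnishes the constant $K$. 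I note in passing that the space natural to $w$, and hence to the norms appearing in the final estimate, is $W_0^{1,p'}(\Omega)$, matching the dual exponent of the datum $\varphi$.
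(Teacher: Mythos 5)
Your proof is correct and follows essentially the same route as the paper, whose entire argument is to cite Simader's $L^{s}$-theory for the Dirichlet Laplacian (\cite[Th. 4.6]{Simader}) on a bounded regular domain; your reconstruction of that theorem via the isomorphism $-\Delta\colon W_0^{1,s}(\Omega)\to W^{-1,s}(\Omega)$ for all $1<s<\infty$, applied with $s=p'$ and obtained by duality from the case $s=p$, is exactly the content being invoked. Your closing observation is also well taken: the norm on $w$ natural to the two-sided estimate is $\|w\|_{W_0^{1,p'}}$, consistent with $w\in W_0^{1,p'}(\Omega)$, whereas the statement as printed writes $\|w\|_{W_0^{1,p}}$.
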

\begin{proof}
Since $\Omega\subset \re{R}^2$ is assumed to be a bounded domain with regular boundary, \cite[Th. 4.6]{Simader} immediately implies the result.\qed
\end{proof}
Note that $J_\gamma:W_0^{1,p}(\Omega)\rightarrow\re{R}$, which implies that $J_\gamma'(u)\in W^{-1,p'}(\Omega)$, for all $u\in W_0^{1,p}(\Omega)$.  Therefore, it is possible to find a unique solution $\widehat{w}_k$ for the following equation
\[\int_\Omega (\nabla \widehat{w}_k,\nabla v)\,dx =-\langle J'_\gamma(u_k)\,,\, v\rangle_{W^{-1,p'},W_0^{1,p}},\,\,\forall v\in W_0^{1,p}(\Omega).\]
However, $\widehat{w}_k\in W_0^{1,p'}(\Omega)\supset W_0^{1,p}(\Omega)$ for $p\geq 2$. This fact prevents us from directly constructing an algorithm like Algorithm \ref{algo-gen}, since $u_{k+1}=u_k + \alpha_k \widehat{w}_k \in W_0^{1,p'}(\Omega)$. Moreover, Theorem \ref{th:existencep>2} can be extended to more general elliptic forms than the Laplacian. These results can be found in, \textit{e.g.}, \cite{groeger}. Consequently, the regularity issue prevails, for several elliptic choices for $P_k$.

A possible solution for this issue is to pose the problem in a suitable $H^s(\Omega)$ space, with $s\in \re{R}$ such that $H^s(\Omega)\subset W_0^{1,p}(\Omega)$. Indeed, it is known that for $p>2$ and $\hat{u}\in W_0^{1,p}(\Omega)$, the following inclusions hold, with continuous injections (see \cite{casas})
\begin{equation}\label{eq:inmp>2a}
W_0^{1,p}(\Omega)\subset H_0^{\hat{u}}(\Omega)\subset H_0^1(\Omega).
\end{equation}
Furthermore, it is possible to state that (see \cite[Prop. 1 pp. 96]{daulions})
\begin{equation}\label{eq:inmp>2b}
H^s(\Omega)\subset H^1(\Omega), \,\,\forall s>1.
\end{equation}
Thus, \cite[Rem. 2 pp. 96]{daulions}, \eqref{eq:inmp>2a} and \eqref{eq:inmp>2b} yield the existence of a $\hat{s}\in \re{R}$ such that
\[
H^{\hat{s}}(\Omega)\subset W_0^{1,p}(\Omega)\subset H^1(\Omega).
\]
Therefore, we can define $P_k$ as the scalar product in $H^{\hat{s}}(\Omega)$. However, several technical challenges arise with this idea. For instance, the actual value of $\hat{s}$ is unknown, and the numerical realisation of the search direction requires the implementation of the Fourier transform of several functions. We consider that all of these issues are beyond the scope of this paper, and will be considered in a future contribution.

Another possible idea to overcome the regularity problem is given by a smoothing step.
\[
W_0^{1,p'}(\Omega) \ni \widehat{w}_k \mapsto w_k \in W_0^{1,p}(\Omega).
\]
In \cite[Sec. 6]{ulbrich}, the author discusses the definition and properties of such a procedure. Though this smoothing procedures are designed for fixing regularity issues in function spaces like the one we have is this paper, they need several technical assumptions. These assumptions, at least in this context, can be very restricitve and can even reduce the admissible set of solutions for equation $P_k(w,v)=-\langle J'_\gamma(u)\,,\,v\rangle$ to the empty set. On the other hand, it is known that in finite dimensional spaces no smoothing step is needed, so we can define $\widehat{w}_k$ as the search direction for the descent algorithm (see \cite[Sec. 6.1]{ulbrich}).  

By taking into account the argumentation above, we consider that the best solution is to analyze the problem with a ``discretize then optimize'' approach. Thus, we propose a finite element discretization of the problem \eqref{eq:probreg}. Next, we propose and study a preconditioned algorithm for the case $p>2$ in finite dimension spaces.

We propose a discretization with first order finite elements, following ideas in \cite{barret,glomaro}. Thus, let $T^h$ be a regular triangulation, in the sense of Ciarlet, of $\Omega$. Next, let $\Omega^h$ be a polygonal approximation to $\Omega$, given by $\Omega^h=\bigcup_{\tau\in T^h} \overline{\tau}$, where all the open disjoint regular triangles $\tau$ have maximum diameter bounded by $h$. Further, for any two triangles, their closures are either disjoint or have a common vertex or a common side. Finally, let $\{P_j\}_{j=1,\ldots,N}$ be the vertices associated with the triangulation $T^h$. Hereafter,  we assume that $P_j\in \partial\Omega^h$ implies that $P_j\in \partial\Omega$ and that $\Omega^h\subset \Omega$. In this paper we will only consider first order approximation, because of the limited higher order regularity for the solutions of the $p$-Laplacian (see \cite{Huang} and the references therein). Taking the above discussion into account, we introduce the following finite-dimensional spaces associated with the triangulation $T^h$
\[
W_0^h:=\{v\in C(\overline{\Omega^h})\,:\, \mbox{ $v|_{\tau}\in \re{P}_1, \forall \tau \in T^h$ and $v=0$ on $\partial \Omega^h$}  \},
\]
where $\re{P}_1$ is the space of polynomials with degree less than or equal to 1.

Thanks to these defintions, we can introduce the following finite element version of the problem \eqref{eq:probreg}:
\begin{equation}\label{eq:probregdis}
\min_{u^h \in W_0^h} J_\gamma^h(u^h):= \frac{1}{p}\int_{\Omega^h} |\nabla u^h|^p\,dx + \int_{\Omega^h}\psi_\gamma (\nabla u^h)\,dx - \int_{\Omega^h}f u^h\,dx.
\end{equation}
\begin{thm}
Problem \eqref{eq:probregdis} has a unique solution $u^h\in W_0^h$.
\end{thm}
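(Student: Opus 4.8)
The plan is to mirror the argument used in Theorem \ref{th:poborig} and in the existence theorem for \eqref{eq:probreg}, since the discrete functional $J_\gamma^h$ is simply a functional of the same structure restricted to the finite-dimensional subspace $W_0^h$. First I would record that $W_0^h$, equipped with the ambient $W_0^{1,p}$ norm, is a finite-dimensional, hence closed and reflexive, normed space on which all norms are equivalent and closed bounded sets are compact. This finite-dimensionality is what makes the existence step even easier than in the continuous case.

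Next I would verify that $J_\gamma^h$ inherits the three properties that drive the existence argument. Continuity is immediate: the map $u^h \mapsto \frac{1}{p}\int_{\Omega^h}|\nabla u^h|^p\,dx$ is continuous, $\psi_\gamma$ is continuous (indeed $C^1$, since the two branches match at $\gamma|z| = g$), so $u^h \mapsto \int_{\Omega^h}\psi_\gamma(\nabla u^h)\,dx$ is continuous, and the linear term $u^h \mapsto \int_{\Omega^h} f u^h\,dx$ is bounded and continuous. For strict convexity, the first term equals $\frac{1}{p}\|u^h\|_{W_0^{1,p}}^p$ and is strictly convex for $1<p<\infty$; the second term is convex because $\psi_\gamma$ is convex (as already used in the existence proof for \eqref{eq:probreg}); and the last term is affine. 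Hence $J_\gamma^h$ is strictly convex.

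For coercivity I would use $\psi_\gamma \geq 0$ together with H\"older's and Poincar\'e's inequalities on $\Omega^h$, exactly as in Lemma \ref{lem:bound}, to obtain a bound of the form $J_\gamma^h(u^h) \geq \frac{1}{p}\|u^h\|_{W_0^{1,p}}^p - C\,\|f\|_{L^{p'}}\,\|u^h\|_{W_0^{1,p}}$, whose right-hand side tends to $+\infty$ as $\|u^h\|_{W_0^{1,p}} \to \infty$ because $p>1$. Thus $J_\gamma^h$ is coercive on $W_0^h$.

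Finally, continuity together with coercivity forces the sublevel sets of $J_\gamma^h$ to be closed and bounded, hence compact in the finite-dimensional space $W_0^h$, so $J_\gamma^h$ attains its infimum; strict convexity then guarantees that the minimizer is unique. (Equivalently, the existence theory invoked in Theorem \ref{th:poborig} applies verbatim, since $W_0^h$ is a closed subspace of the reflexive space $W_0^{1,p}$.) There is no genuine obstacle here: the only point worth checking carefully is that replacing $\Omega$ by the polygonal domain $\Omega^h$ and restricting to $W_0^h$ preserves these properties, which it does, since $\Omega^h$ is itself a bounded polygonal (Lipschitz) domain on which Poincar\'e's inequality holds and $W_0^h$ carries the ambient $W_0^{1,p}$ norm.
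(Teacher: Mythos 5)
Your argument is correct and is essentially the paper's own: the paper simply observes that $W_0^h$ is a closed subspace of $W_0^{1,p}(\Omega)$, so the continuity, strict convexity, and coercivity already established for $J_\gamma$ carry over and the direct method applies verbatim. Your additional finite-dimensional compactness argument is a valid (and even simpler) way to close the existence step, but it does not change the substance of the proof.
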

\begin{proof}
This result is a direct consequence of the fact that $W_0^h$ is a closed subspace of $W_0^{1,p}(\Omega)$ (see \cite[Sec. 3.2]{glomaro}).\qed
\end{proof}
As stated in the previous section, in finite dimensional spaces is not mandatory to use smoothing steps to construct preconditioned descent algorithms for problems like \eqref{eq:probregdis}. Further, in this case we know that (see \cite{casas,glomaro})
\begin{equation}\label{eq:spdisp>2}
W_0^h\subset W_0^{1,p}(\Omega)\subset H_0^1(\Omega).
\end{equation}
Thanks to this fact, we can consider $W_0^h$ a Hilbert space with the norm induced by $H_0^1(\Omega)$, which we will note by $\|\cdot\|_{W_0^h}$. 

Summarizing, we propose the following algorithm for problem \eqref{eq:probregdis} with $p>2$.
 
\begin{algorithm}\label{algodisp>2}
Initialize $u_0\in W_0^h$ and set $k=0$.  For $k=1,2,\ldots$ do
\begin{enumerate}
\item If $J^{h'}_\gamma (u^h_k)=0$, STOP.
\item Find a search direction $w^h_k$ by solving the following variational equation
\begin{equation}\label{eq:wpdis>2}
\begin{array}{lll}
\int_\Omega (\nabla w^h_k,\nabla v)\,dx =-\langle J^{h'}_\gamma(u^h_k)\,,\, v\rangle_{(W_0^h)^*,W_0^h} \vspace{0.2cm}\\\hspace{0.5cm}= -\int_\Omega |\nabla u^h_k|^{p-2}(\nabla u^h_k,\nabla v)\,dx -g\gamma \int_\Omega \frac{(\nabla u^h_k,\nabla v)}{\max(g,\gamma|\nabla u^h_k|)}\, dx +\int_\Omega f\,v\,dx,\,\,\forall v\in W_0^h.
\end{array}
\end{equation} 
\item Perform an efficient line search technique to obtain $\alpha_k$.
\item Update $u^h_{k+1}:= u^h_k + \alpha_k w^h_k$ and set $k=k+1$.
\end{enumerate}
\end{algorithm}

\begin{prop}\label{eq:wdiscrete}
The equation \eqref{eq:wpdis>2} has a unique solution $w_k^h\in W_0^h$. Furthermore, this solution $w_k^h$ is an admissible descent direction for $J^{h'}_\gamma(u_k)$, \textit{i.e.}, it satisfies that
\[\langle J^{h'}_\gamma(u_k)\,,\, w_k\rangle_{W_0^{h*},W_0^h}<0,\forall k\in \re{N},\]
and the following admissibility condition
\begin{equation}\label{eq:admdis>2}
\frac{\langle J^{h'}_\gamma(u_k)\,,\, w_k\rangle_{W_0^{h*},W_0^h}}{\|w_k\|_{W_0^h}}\underset{k\rightarrow\infty}{\longrightarrow} 0\Rightarrow \|J^{h'}_\gamma(u_k)\|_{W_0^{h*}}\underset{k\rightarrow\infty}{\longrightarrow}  0.
\end{equation}
\end{prop}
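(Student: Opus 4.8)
The plan is to establish the three assertions in order: existence and uniqueness of $w_k^h$, the descent property, and the admissibility condition \eqref{eq:admdis>2}. For existence and uniqueness, I would invoke the Riesz--Fr\'echet representation theorem exactly as in the $1<p<2$ case. By \eqref{eq:spdisp>2}, $W_0^h$ is a finite-dimensional subspace of $H_0^1(\Omega)$, hence a Hilbert space with the inner product $(w,v)\mapsto \int_\Omega(\nabla w,\nabla v)\,dx$ (positive-definiteness follows from the Poincar\'e inequality on $H_0^1$ together with the fact that $v=0$ on $\partial\Omega^h$). The right-hand side of \eqref{eq:wpdis>2} defines a bounded linear functional on $W_0^h$, so Riesz--Fr\'echet yields a unique $w_k^h\in W_0^h$ representing it.

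For the descent property, I would test \eqref{eq:wpdis>2} with $v:=w_k^h$. This gives
\[
-\langle J^{h'}_\gamma(u_k)\,,\,w_k^h\rangle_{W_0^{h*},W_0^h}=\int_\Omega|\nabla w_k^h|^2\,dx=\|w_k^h\|_{W_0^h}^2\geq 0,
\]
and the inequality is strict whenever $w_k^h\neq 0$, which holds precisely when $J^{h'}_\gamma(u_k)\neq 0$ (the case of equality being the stopping criterion in step 1 of Algorithm \ref{algodisp>2}). Hence $\langle J^{h'}_\gamma(u_k)\,,\,w_k^h\rangle_{W_0^{h*},W_0^h}<0$, which is the claimed descent property.

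For the admissibility condition, the key observation is that the preconditioner $P_k(w,v)=\int_\Omega(\nabla w,\nabla v)\,dx$ is precisely the $H_0^1$ scalar product, so $w_k^h$ is the Riesz representative in $W_0^h$ of $-J^{h'}_\gamma(u_k)$. It follows that $\|w_k^h\|_{W_0^h}=\|J^{h'}_\gamma(u_k)\|_{W_0^{h*}}$, where the dual norm is taken with respect to the $H_0^1$-induced norm on $W_0^h$. Combining this identity with the computation above yields
\[
\frac{\langle J^{h'}_\gamma(u_k)\,,\,w_k^h\rangle_{W_0^{h*},W_0^h}}{\|w_k^h\|_{W_0^h}}=\frac{-\|w_k^h\|_{W_0^h}^2}{\|w_k^h\|_{W_0^h}}=-\|w_k^h\|_{W_0^h}=-\|J^{h'}_\gamma(u_k)\|_{W_0^{h*}}.
\]
Therefore the quotient on the left of \eqref{eq:admdis>2} tends to $0$ if and only if $\|J^{h'}_\gamma(u_k)\|_{W_0^{h*}}\to 0$, which is in fact stronger than the implication required.

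The main subtlety to be careful about is the choice of norm defining the dual $W_0^{h*}$ and the consistency of the duality pairings. The admissibility condition is stated with the $\|\cdot\|_{W_0^h}$ norm throughout, so I must ensure that the Riesz isometry is taken in that same Hilbert structure; once this is fixed, the argument reduces to the elementary identity $\|w_k^h\|=\|J^{h'}_\gamma(u_k)\|_{*}$ for a Riesz representative, and no genuine analytical obstacle remains. Unlike the $p>2$ situation in function spaces discussed earlier, the finite-dimensionality of $W_0^h$ guarantees that $w_k^h$ lies in the same space as the iterates, so no smoothing step or regularity repair is required.
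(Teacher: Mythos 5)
Your proposal is correct and follows essentially the same route as the paper's proof: existence and uniqueness via the Riesz representation of $-J^{h'}_\gamma(u_k^h)$ in the Hilbert space $W_0^h$ with the $H_0^1$-induced inner product, the descent property by testing \eqref{eq:wpdis>2} with $v=w_k^h$, and the admissibility condition from the Riesz isometry $\|w_k^h\|_{W_0^h}=\|J^{h'}_\gamma(u_k^h)\|_{W_0^{h*}}$. Your added remarks (handling the case $w_k^h=0$ via the stopping criterion, and observing that the quotient actually equals $-\|J^{h'}_\gamma(u_k^h)\|_{W_0^{h*}}$ so the implication is an equivalence) are minor refinements of the same argument.
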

\begin{proof}
Existence of a unique solution directly follows from the fact that $W_0^h$ is a Hilbert subspace of $H_0^1(\Omega)$ with the induced norm of this space. Therefore, $w_k^h$ is the Riesz representation of the functional $-J_\gamma^{h'}(u^h_k)$ in the space $W_0^h$ (see \cite[Sec. 3.1]{Huang}). Furthermore, thanks to \eqref{eq:spdisp>2}, from \eqref{eq:wpdis>2} we can conclude that
\begin{equation}\label{eq:desdis1}
\langle J^{h'}_\gamma(u^h_k)\,,\, w_k^h\rangle_{W_0^{h*},W_0^h}= - \|w_k^h\|_{W_0^h}^2< 0, \,\,\forall k\in \re{N},
\end{equation}
which yields that $w_k^h$ is, indeed, a descent direction for $J^{h'}_\gamma(u^h_k)$. Finally, since $w_k^h$ is the Riesz representation of $J^{h'}_\gamma(u^h_k)$ in $W_0^h$, we have that
\[
\|w_k^h\|_{W_0^h} = \|J^{h'}_\gamma(u^h_k)\|_{W_0^{h*}}.
\]
This last identity, together with \eqref{eq:desdis1}, yield that
\[
\langle J^{h'}_\gamma(u^h_k)\,,\, w_k^h\rangle_{W_0^{h*},W_0^h}=- \|J^{h'}_\gamma(u^h_k)\|_{W_0^{h*}}\|w_k^h\|_{W_0^h},
\]
which immediately implies \eqref{eq:admdis>2}.\qed
\end{proof}

\begin{thm}\label{th:convp>2dis}
Let $w^h_k$, $\alpha_k$ and $u^h_k$ generated by Algorithm \ref{algodisp>2}. Then,
\begin{equation}\label{eq:conp>2dis}
\underset{k\rightarrow\infty}{\lim} \, J_\gamma^{h'}(u^h_k)=0.
\end{equation}
\end{thm}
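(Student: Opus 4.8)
The plan is to obtain \eqref{eq:conp>2dis} by combining the efficiency of the line search with the admissibility implication \eqref{eq:admdis>2} already proved in Proposition \ref{eq:wdiscrete}; the argument is the finite-dimensional analogue of Proposition \ref{th:zout} and Theorem \ref{th:convp<2}. First I would record that $J_\gamma^h$ is bounded below. Since $W_0^h\subset W_0^{1,p}(\Omega)$ is a closed subspace, $J_\gamma^h$ inherits the strict convexity and coercivity of $J_\gamma$ from Theorem \ref{th:poborig}, so it attains its minimum on $W_0^h$ and the sublevel set $\mathcal{S}:=\{v\in W_0^h:\,J_\gamma^h(v)\le J_\gamma^h(u_0)\}$ is bounded. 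Because step 3 of Algorithm \ref{algodisp>2} enforces descent, the whole sequence $\{u_k^h\}$ remains in $\mathcal{S}$, and $\{J_\gamma^h(u_k^h)\}$ is monotonically decreasing and bounded below, hence convergent.

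Next, since $\alpha_k$ is chosen by an efficient line search (the Wolfe-Powell rule \eqref{eq:Wolfe}), the efficiency inequality furnishes a constant $\zeta>0$, independent of $k$, with
\[
J_\gamma^h(u_{k+1}^h)\le J_\gamma^h(u_k^h)-\zeta\left(\frac{\langle J_\gamma^{h'}(u_k^h),\,w_k^h\rangle_{W_0^{h*},W_0^h}}{\|w_k^h\|_{W_0^h}}\right)^2.
\]
Telescoping this estimate over $k=0,\dots,n$ and using that $J_\gamma^h$ is bounded below yields
\[
\zeta\sum_{k=0}^{\infty}\left(\frac{\langle J_\gamma^{h'}(u_k^h),\,w_k^h\rangle_{W_0^{h*},W_0^h}}{\|w_k^h\|_{W_0^h}}\right)^2\le J_\gamma^h(u_0)-\inf_{v\in W_0^h}J_\gamma^h(v)<\infty,
\]
so the series converges and its general term tends to zero. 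Consequently the quotient appearing in \eqref{eq:admdis>2} tends to $0$, and the admissibility implication already established in Proposition \ref{eq:wdiscrete} forces $\|J_\gamma^{h'}(u_k^h)\|_{W_0^{h*}}\to 0$, which is exactly \eqref{eq:conp>2dis}.

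The main obstacle is justifying the efficiency constant $\zeta$, since efficiency of the Wolfe-Powell rule requires $J_\gamma^{h'}$ to be Lipschitz continuous along the iterates. As in Proposition \ref{th:zout}, I would split $J_\gamma^h=\mathcal{F}+\mathcal{G}_\gamma\circ\nabla$; the Huber term $\mathcal{G}_\gamma$ has a globally Lipschitz gradient (the map $z\mapsto g\gamma z/\max(g,\gamma|z|)$ is Lipschitz), whereas for $p>2$ the $p$-Laplacian gradient associated with $\mathcal{F}$ is only locally Lipschitz. The resolution is precisely the boundedness of $\mathcal{S}$ established in the first step: on the bounded set containing all iterates the $p$-Laplacian gradient is Lipschitz, so $J_\gamma^{h'}$ is Lipschitz there and the standard convergence theory for efficient line searches applies (see \cite[Lem. 2.5.6]{SunYuan} and \cite[pp. 29]{kanzow}). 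I expect this localization to a bounded sublevel set to be the only delicate point, all remaining steps being a direct telescoping argument followed by the already-proved implication \eqref{eq:admdis>2}.
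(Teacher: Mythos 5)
Your proposal is correct and follows essentially the same route as the paper: the paper simply invokes the admissibility of $w_k^h$ from Proposition \ref{eq:wdiscrete} together with the efficiency of the line search and cites the general convergence theorem \cite[Th. 2.2]{hinetal}, whose proof is exactly your telescoping of the efficiency inequality followed by the implication \eqref{eq:admdis>2}. Your additional localization to a bounded sublevel set to justify the Lipschitz continuity needed for efficiency is a careful touch the paper leaves implicit, but it does not constitute a different argument.
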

\begin{proof}
Since $w^h_k$ satisfies \eqref{eq:admdis>2} and $\alpha_k$ is calculated by an efficient line search algorithm, admissibility of these two sequences is guaranteed. Therefore, since all the hypothesis of \cite[Th. 2.2]{hinetal} are fulfilled, we can conclude the proof.\qed
\end{proof}

\section{Numerical Implementation}\label{sec:numres}
In this section we discuss all the issues related to the numerical implementation of the algorithms developed in the last section. Further, we present several numerical experiments to show the behavior of these algorithms. Such experiments are concerned with the two cases analyzed during this paper: $1<p<2$ and $p>2$. The case $p=2$ has been widely analyzed, by using a similar regularization approach, in \cite{dlRGpf,dlRG2D,dlRGtd}. Moreover, we focus our experiments on the numerical simulation of the laminar flow of a Herschel-Bulkley fluid in a pipe. Therefore all the experiments have been carried out for a constant function $f$, which represents the linear decay of pressure in the pipe.

\subsection{Discretization issues}
In this section we describe the finite element implementation that we use in all the numerical experiments. Let us start by pointing out that we use the same finite element approach described in Section \ref{sec:algop>2}. Thus, we recall the finite dimension space 
\[
W_0^h:=\{v\in C(\overline{\Omega^h})\,:\, \mbox{ $v|_{\tau}\in \re{P}_1, \forall \tau \in T^h$ and $v=0$ on $\partial \Omega^h$}  \},
\]
where $\re{P}_1$ is the space of polynomials with degree less than or equal to 1. We note the basis functions of $W_0^h$ by $\varphi_j$, $j=1,\ldots,n$ and we assume that $card(T^h)=m$. Further, we use the notation $\overrightarrow{u}$ for the coefficients of the approximated functions $u^h$.

By following ideas in \cite[Sec. 4]{dlRGpf}, we use the following discrete version of the gradient 
\begin{equation}\label{eq:gradiscrete}
\nabla^{h}:=\begin{pmatrix}
 \partial_{1}^{h}\\\partial_{2}^{h}
\end{pmatrix} \in\re{R}^{2m\times n},
\end{equation}
where $\partial_{1}^{h}:=\left. \frac{\partial\varphi_i(x)}{\partial x_1} \right|_{\tau_k}$ and
$\partial_{2}^{h}:=\left. \frac{\partial\varphi_i(x)}{\partial x_2} \right|_{\tau_k}$, for
$i=1,\ldots,n$ and $\tau_k\in T^h$. Note that $\left. \frac{\partial\varphi_i(x)}{\partial x_1}
\right|_{\tau_k}$ and $\left. \frac{\partial\varphi_i(x)}{\partial x_2}\right|_{\tau_k}$ are the
constant values of $\frac{\partial\varphi_i(x)}{\partial x_1}$ and $\frac{\partial\varphi_i(x)}{\partial x_1}$ in
each triangle $\tau_k$, respectively. Consequently, $\nabla^h \overrightarrow{u}$ is the approximation of $\nabla u^h(x)$. 

Next, let us introduce the function $\xi:\re{R}^{2m}\rightarrow \re{R}^m$ given by
\[\xi(w)_k = |(w_k,w_{k+m})|^\top,\,\, k=1,\ldots,m.\]
Therefore, we calculate $|\nabla u^h(x)|$ by $\xi(\nabla^h \overrightarrow{u})$. Note that $\xi(\nabla^h\overrightarrow{u})_k$ represents the value of $|\nabla u^h(x)|$ at each triangle $\tau_k\in T^h$.

Finally, we discuss the implementation of $\int_\Omega (\epsilon+ |\nabla u|)^{p-2} (\nabla w, \nabla v)\,dx$, $\int_\Omega |\nabla u|^{p-2} (\nabla u, \nabla v)\,dx$ and $g \gamma \int_\Omega  \frac{(\nabla u,\nabla v)}{\max(g,\gamma |\nabla u|)}\, dx$. By using the Galerkin's method, we obtain the following
\begin{itemize}
\item $
\int_\Omega (\epsilon+ |\nabla u|)^{p-2} (\nabla w,\nabla \varphi_j)\,dx \approx \sum_{i=1}^{n} w_i \sum_{\tau_k\in T^h}\int_{\tau_k} (\epsilon+\xi(\nabla^h\overrightarrow{u})_k)^{p-2} (\nabla \varphi_i,\nabla \varphi_j)\,dx,\vspace{0.2cm}
$
\item $
\int_\Omega |\nabla u|^{p-2} (\nabla u,\nabla \varphi_j)\,dx \approx \sum_{i=1}^{n} u_i \sum_{\tau_k\in T^h}\int_{\tau_k} (\xi(\nabla^h\overrightarrow{u})_k)^{p-2} (\nabla \varphi_i,\nabla \varphi_j)\,dx
$
and\vspace{0.2cm}
\item $
\int_\Omega g \gamma\frac{(\nabla u,\nabla \varphi_j)}{\max(g,\gamma|\nabla u|)}\,dx \approx  \sum_{i=1}^{n} u_i \sum_{\tau_k\in T^h}g\gamma\int_{\tau_k} \frac{(\nabla \varphi_i,\nabla \varphi_j)}{\max(g,\gamma\xi(\nabla^h\overrightarrow{u})_k)} \,dx,
$
\end{itemize}
for $j=1,\ldots,n.$ Next, note that the terms $ (\epsilon+\xi(\nabla^h\overrightarrow{u})_k)^{p-2}$, $ (\xi(\nabla^h\overrightarrow{u})_k)^{p-2}$ and $\max(g,\gamma\xi(\nabla^h\overrightarrow{u})_k)$ are constant at every triangle $\tau_k$. 

Thus, by using ideas in \cite{cc50}, we obtain a matrix approximation $A^h_{\epsilon,u}\in \re{R}^{n\times n}$, $A^h_u\in \re{R}^{n\times n}$ and $A_{u,\max}^h\in \re{R}^{n\times n}$, for any of the forms in the expression above. The entries of these matrices are given by
\begin{itemize}
\item $(a_{\epsilon,u})_{i,j}=\sum_{\tau_k\in T^h} (\epsilon+ \xi(\nabla^h\overrightarrow{u})_k)^{p-2}\int_{\tau_k}  (\nabla \varphi_i,\nabla \varphi_j)\,dx$,
\item $(a_u)_{i,j}=g\gamma\sum_{\tau_k\in T^h} (\xi(\nabla^h\overrightarrow{u})_k)^{p-2}\int_{\tau_k}  (\nabla \varphi_i,\nabla \varphi_j)\,dx$ and
\item $(a_{u,\max})_{i,j}=g\gamma \sum_{\tau_k\in T^h}\frac{1}{\max(g,\gamma\xi(\nabla^h\overrightarrow{u})_k)}\int_{\tau_k} (\nabla \varphi_i,\nabla \varphi_j) \,dx$.
\end{itemize}
Finally, by following ideas in \cite{barret}, we approximate the right hand side as follows
\[
\int_\Omega f^h\varphi_j\,dx\approx \sum_{\tau\in T^h} Q_\tau (f\varphi_j),\,\, j=1,\ldots, n,
\]
where the quadrature rule $Q_\tau$ is given by
\[
Q_\tau(v) = \frac{1}{3} meas(\tau) \sum_{i=1}^3 v(a_i),\,\,\mbox{with $a_i,\,i =1,\ldots, 3$ the vertices of $\tau\in T^h$}.
\]
\begin{rem}
It is remarkable that due to the proposed structure, the Algorithms \ref{algo1<p<2} and \ref{algodisp>2} only need to solve one linear system at each iteration.  In fact, Algorithm \ref{algo1<p<2} and  Algorithm \ref{algodisp>2} require the solution of linear systems like
\[
A^h_{\epsilon,u}\,\overrightarrow{w}= \eta_1^h, \mbox{   and   } A^h\,\overrightarrow{w} = \eta_2^h,
\] 
respectively. Here, $A^h_{\epsilon,u}$ is given above, $A^h$ is the classical stiffness matrix and $\eta_1^h$ and $\eta_2^h$ are the F.E.M. approximation of the right hand side of equations \eqref{eq:wp<2algo} and \eqref{eq:wpdis>2}, respectively.  Note that matrix $A^h_{\epsilon,u}$ depends on $u_k$,  but does not depend on $w_k$. Further, $A^h$ does not depend neither on $u_k$ nor in $w_k$. This fact implies that the linear systems can be easily solved by any direct or iterative method and does not represent a large computational effort. 
\end{rem}

\begin{rem}(Stopping Criterion)\label{rem:stop}
We stop the Algorithms \ref{algo1<p<2} and \ref{algodisp>2} as soon as the expression $\frac{|J'_{\gamma,h}(\overrightarrow{u}_k)|}{|J'_{\gamma,h}(\overrightarrow{u}_0)|}$ is reduced by a factor of $10^{-6}$.  Here $J'_{\gamma,h}(\overrightarrow{u}_k)$ stands for the FEM discrete version of $J'(u_k)$ and is given by
\[
J'_{\gamma,h}(\overrightarrow{u}_k):= A_{u}^h\overrightarrow{u}_k + A_{u,\max}^h \overrightarrow{u}_k - \overrightarrow{f}. 
\]
This kind of stopping criterion is popular for steepest descent algorithms, since it is easy to implement, and it provides enough information about the convergence behavior of the algorithm (see \cite{kelley}).
\end{rem}

\subsection{Line search algorithms}
As stated in Section \ref{sec:preconalgo}, we need to focus on the implementation of efficient inexact line search methods. One typical technique is the backtracking line-search algorithm.  The general idea behind this approach is to take $\alpha_k=1$. Then, if $u_k+\alpha_k w_k$ is not acceptable, in the sense that a descent condition on $J_\gamma$ is not fulfilled, $\alpha_k$ is reduced (``backtracked'') until $u_k+\alpha_k w_k$ is acceptable.

We propose to use an algorithm which uses polynomial models of the objective functional for backtracking, which is detailed in \cite[Sec. 6.3.2]{densch}. In this section, we briefly describe this algorithm and the main ideas behind it.

The central discussion in a backtracking algorithm is how to reduce $\alpha_k$. Usually, the backtracking algorithm is implemented by taking $\alpha_{k}=\frac{1}{2^k}\alpha_k$, so $\alpha_k$ is reduced to half at each iteration. This procedure can be inefficient since usually needs several iterations to achieve convergence, and, moreover, the step sizes can be very small.

In this paper, following ideas in \cite[Sec. 6.3.2]{densch}, we propose a reduction strategy for $\alpha_k$ based on polynomial models of the objective function. 

Let us start by introducing the following function
\[
\varphi_k(\alpha):= J_\gamma(u_k + \alpha w_k).
\]
Next, by using the current information of $J_\gamma$, we take $\alpha_k$ as the approximation to the value that minimizes $\varphi_k(\alpha)$, \textit{i.e.}, $\alpha_k\approx \argmin \varphi_k(\alpha)$. 

First, note that the following information about $\varphi_k$ is available.
\begin{equation}\label{eq:alarmijo1}
\varphi_k(0)= J(u_k) \mbox{   and   }  \varphi_k'(0)= \langle J'_\gamma(u_k)\,,\, w_k\rangle.
\end{equation}
Further, once we calculate $J_\gamma(u_k + w_k)$, we know that
\begin{equation}\label{eq:alarmijo2}
\varphi_k(1)=J_\gamma(u_k + w_k).
\end{equation}
Next, if $J_\gamma(u_k + w_k)$ does not satisfy the descent condition (\textit{i.e.}, $\varphi_k(1)>\varphi_k(0) + \sigma_1 \varphi_k'(0)$), we construct the following quadratic model for $\varphi_k$ by using \eqref{eq:alarmijo1} and \eqref{eq:alarmijo2}. 
\[
m_2(\alpha):= (\varphi_k(1)- \varphi_k(0) - \varphi_k'(0))\alpha^2 + \varphi_k'(0)\alpha + \varphi_k(0).
\]
It is easy to prove that
\begin{equation}\label{eq:alarmijo3}
\tilde{\alpha}_2=\frac{-\varphi_k'(0)}{2(\varphi_k(1)-\varphi_k(0)- \varphi_k'(0))}
\end{equation}
is a stationary point of $m_2$, \textit{i.e.,} it satisfies that $m_2'(\tilde{\alpha})=0$. Moreover, we have that
\[
m_2''(\alpha)=2(\varphi_k(1)-\varphi_k(0)-\varphi_k'(0))>0,
\]
since $\varphi_k(1)>\varphi_k(0)+\sigma_1 \varphi_k'(0)>\varphi_k(0)+\varphi_k'(0)$. Thus, we conclude that $\tilde{\alpha}_2$ minimizes the model $m_2$ and, since $\varphi_k'(0)<0$, we have that $\tilde{\alpha}_2>0$. Consequently, we take $\alpha_k:=\tilde{\alpha}_2$. 

Now, since $\varphi_k(1)> \varphi_k(0) + \alpha \varphi_k'(0)$, from \eqref{eq:alarmijo3}, we have that
\[
\tilde{\alpha}_2< \frac{1}{2(1-\sigma_1)}.
\]
This fact implies, provided $\varphi_k(1)\geq \varphi_k(0)$, that $\tilde{\alpha}\leq \frac{1}{2}$. Therefore, \eqref{eq:alarmijo3} gives an implicit upper bound of $\approx\frac{1}{2}$ for $\tilde{\alpha}_2$ on the first backtrack. On the other hand, if $\varphi_k(1)>>\varphi_k(0)$, $\tilde{\alpha}$ can be very small. This fact suggests that $\varphi_k(\alpha)$ is probably poorly modeled by a quadratic function is this region. In order to avoid too small steps, we impose a lower bound of $\frac{1}{10}$. Therefore, if at the first backtrack at each iteration we have that $\tilde{\alpha}_2\leq 0.1$, the algorithm next tries $\alpha_k=\frac{1}{10}$ 

Now, suppose that $\varphi(\tilde{\alpha}_2)$ does not satisfy \eqref{eq:Wolfe1}, which implies that we need to backtrack again. In this case, we have the following information available: $\varphi_k(0)= J(u_k)$, $\varphi_k'(0)= \langle J'_\gamma(u_k)\,,\, w_k\rangle$ and the last two values of $\varphi(\alpha)$. Therefore, we use a cubic model of $\varphi$ fitting all these pieces of information, and set $\alpha_k$ to be the minimizer of this new model. This procedure is justified since a cubic polynomial can perform  better when modelling situations where $J_\gamma$ has negative curvature, which are likely when \eqref{eq:Wolfe1} is not achieved for two possible values of $\alpha$ (see \cite[pp. 128]{densch}).

The construction of this cubic model is as follows. Let $\alpha_p$ and $\alpha_{2p}$ be the last two previous values of $\alpha_k$. Then, the cubic that fits  $\varphi_k(0)$,  $\varphi'_k(0)$,  $\varphi_k(\alpha_p)$ and $\varphi_k(\alpha_{2p})$ is given by
\[
m_3(\alpha):= c\alpha^3 + d\alpha^2 + \varphi_k'(0)\alpha + \varphi_k(0),
\]
where
\[
\begin{pmatrix}
c\\d
\end{pmatrix} = \frac{1}{\alpha_p - \alpha_{2p}} \begin{pmatrix}
\frac{1}{\alpha_p^2} & \frac{-1}{\alpha_{2p}^2}\\\frac{-\alpha_{2p}}{\alpha_p^2} & \frac{\alpha_p}{\alpha_{2p}^2}
\end{pmatrix}\begin{pmatrix}
\varphi_k(\alpha_p)-\varphi_k(0)-\varphi'_k(0)\alpha_p \\\varphi_k(\alpha_{2p})-\varphi_k(0)-\varphi'_k(0)\alpha_{2p}.  
\end{pmatrix}
\]
Further, it is easy to prove that the minimizer of $m_3$ is 
\begin{equation}\label{eq:alarmijo4}
\tilde{\alpha}_3= \frac{-d+\sqrt{d^2 - 3c\varphi_k'(0)}}{3c}.
\end{equation}
In \cite{densch} is established that if $\varphi(\alpha_p)\geq \varphi(0)$, then $\tilde{\alpha}_3< \frac{2}{3}\alpha_p$, but this reduction is considered too small. Therefore, we impose the upper bound $b=0.5$, which implies that if $\tilde{\alpha}_3> \frac{1}{2}\alpha_p$, we set $\alpha_k = \frac{1}{2}\alpha_p$. Also, since $\tilde{\alpha}_3$ can be an arbitrarily small fraction of $\alpha_p$, we again impose the lower bound $a=\frac{1}{10}$, \textit{i.e.}, if $\tilde{\alpha}_3< \frac{1}{10}\alpha_p$, we set $\alpha_k = \frac{1}{10}\alpha_p$. 

Summarizing, we have the following line search algorithm.

\begin{algorithm}\label{algo:armijo}
Let $\sigma_1\in (0,\frac{1}{2})$ and set $\alpha_0=1$. 
\begin{enumerate}
\item Decide wheter $J_\gamma(u_k + \alpha_k)> J_\gamma (u_k) + \sigma_1 \alpha_k \langle J'_\gamma(u_k)\,,\, w_k\rangle$ holds. If so, STOP and set $\alpha_k=\alpha_0$. If not:
\item Decide wheter steplength is too small. If so, STOP and terminate algorithm: routine failed to locate satisfactory $x_{k+1}$ sufficiently distinct from $x_k$. If not:
\item Decrease $\alpha$ by a factor between 0.1 and 0.5 as follows:
\begin{enumerate}
\item On the first backtrack: set $\alpha_k:=\tilde{\alpha}_2= \argmin m_2(\alpha)$, but constrain the new $\alpha_k$ to be $\geq 0.1$.
\item On all the subsequent backtracks: set $\alpha_k:= \tilde{\alpha}_3 =\argmin m_3(\alpha)$, but constraint the new $\alpha_k$ to be in $[0.1\alpha_p\,,\, 0.5 \alpha_p]$.
\end{enumerate}
\item Return to step 1.
\end{enumerate}
\end{algorithm}
Here, the parameter $\sigma_1$ is set quite small, usually in the order of $10^{-4}$. Further, \eqref{eq:alarmijo4} is never imaginary if $\sigma_1$ is less than $\frac{1}{4}$ (see \cite[pp. 129]{densch})

Note that this algorithm only implements the first Wolfe-Powell condition \eqref{eq:Wolfe1}. The curvature condition \eqref{eq:Wolfe2} is not usually implemented because the backtracking technique avoids excessively small steps. It is established that the bounds in the algorithm on the amount of each calculation of $\alpha$ make the curvature condition to hold  (for further details and examples see \cite[pp. 126-129]{densch} and the references therein).

\subsection{Numerical Results: Case $1<p<2$}
In this section, we focus on the behavior of Algorithm \ref{algo1<p<2}. In the next experiments, we consider that the problem \eqref{eq:probreg} represents the flow of a Herschel-Bulkley fluid with $1<p<2$, so we are in the case of a shear-thinning material. Further, we consider a constant $f$, which represents the linear decay of pressure in the pipe. In this context, the constant $g$ plays the role of the plasticity threshold and it is modelled by the Oldroy number (see \cite{Huilgol}). For further details in the mechanics of these problems, we refer the reader to \cite{Chhabra,dlRGpf,Huilgol} and the references therein. 

Hereafter, we use uniform triangulations described by $h$, the radius of the inscribed circumferences of the triangles in the mesh. In the next examples, we use the values
$\gamma=10^{3}$ and $\epsilon=10^{-6}$, and we initialize the algorithm \ref{algo1<p<2} with the solution of the Poisson problem
$-\Delta u_0^h=f^h$.  Further, we stop the algorithm by using the stopping criteria described in Remark \ref{rem:stop}. 

\subsubsection{Experiment 1}
In this experiment, we set $\Omega\subset \re{R}^2$ to be the unit ball, and we compute the flow of a Herschel-Bulkley material with $p=1.75$. We analyze the behavior of the algorithm with $g=0.2$ and $f=1$, and we use a mesh given by $h\approx 0.0086$.
\begin{figure}
\begin{center}
\includegraphics[width=80mm, height=60mm]{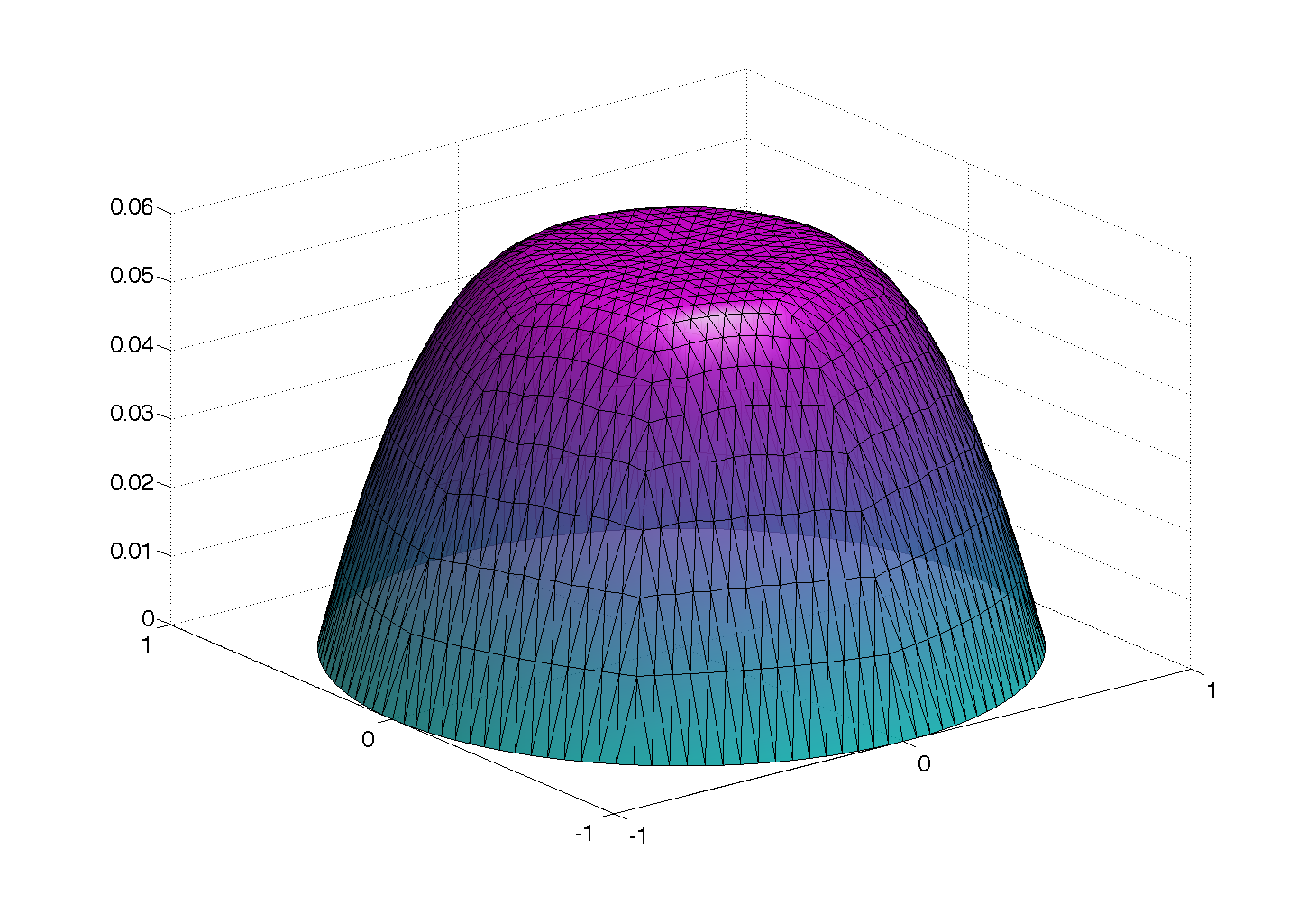}\hspace{0.cm}\includegraphics[width=75mm, height=55mm]{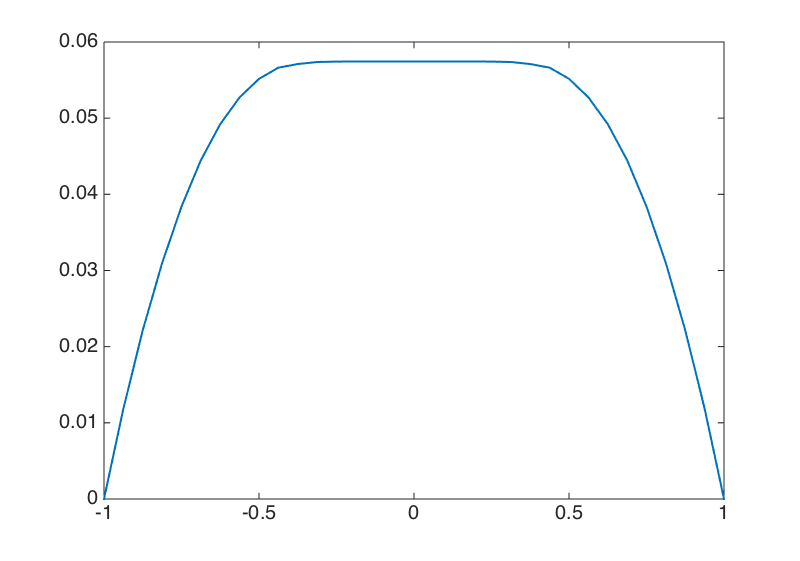}
\end{center}
\caption{Calculated vleocity $u$  (left) and velocity profile along the diameter of the pipe (right). Parameters: $p=1.75$, $g=0.2$, $\gamma=10^3$ and $\epsilon=10^{-6}$.}\label{fig:up175}
\end{figure}

\begin{table}
\begin{center}
\begin{tabular}{|c|c|c|c|c|c|}
\hline
it. & $\frac{|J'_{\gamma,h}(\overrightarrow{u}_k)|}{|J'_{\gamma,h}(\overrightarrow{u}_0)|}$ & $J_{\gamma,h}(\overrightarrow{u}_k)$ & $\alpha_k$ & l.s. it.\\
\hline
 1 &  2.070e-3  & -0.022393 & 1.0000 & 0\\ 
 2 &  8.758e-3  & -0.027232 & 0.4199 & 1\\
 3 &  2.959e-3  & -0.028522 & 0.3390 & 1\\
 4 &  5.582e-4  & -0.028778 & 0.2600 & 1\\
 5 &  3.532e-4  & -0.028911 & 0.1231 & 2\\
 6 &  7.549e-4  & -0.029028 & 0.1864 & 1\\
 7 &  6.590e-4  & -0.029057 & 0.0788 & 2\\
 8 &  4.865e-4  & -0.029091 & 0.0558 & 2\\
 9 &  1.179e-4 & -0.029101 & 0.0485 & 2\\
 10 & 6.655e-7 & -0.029107 & 0.0424 & 2\\
  \hline
\end{tabular}
\caption{Convergence behavior for Algorithm \ref{algo1<p<2}. Parameters: $p=1.75$, $g=0.2$, $\gamma=10^3$ and $\epsilon=10^{-6}$.}\label{tab:conv1<p<2ex1}
\end{center}
\end{table}

The resulting velocity function and the velocity profile along the diameter of the pipe are displayed in Figure \ref{fig:up175}. The graphics illustrate the expected mechanical properties of the material, i.e., since the shear stress transmitted by a fluid layer decreases toward the center of the pipe, the Herschel-Bulkley fluid moves like a solid in that sector. This effect explains the flattening of the velocity in the center of the pipe.

\begin{figure}
\begin{center}
\includegraphics[width=70mm, height=50mm]{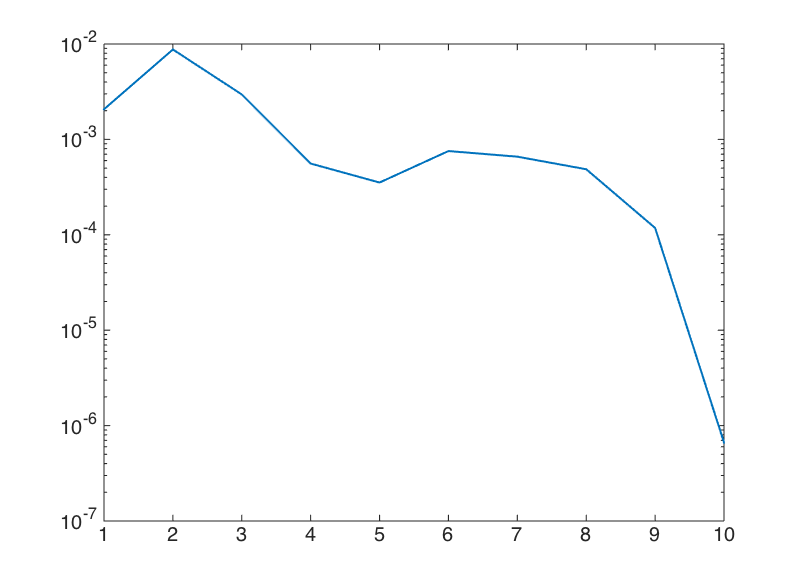}
\end{center}
\caption{Calculated residual $|J'_{\gamma,h}({u}_k)|/|J_{\gamma,h}'(u_0)|$ for: Algorithm \ref{algo1<p<2} (left) and Wolfe-Powell (right). Parameters: $p=1.75$, $g=0.2$, $\gamma=10^3$ and $\epsilon=10^{-6}$.}\label{fig:errp175}
\end{figure}

In Table \ref{tab:conv1<p<2ex1}, we show the number of iterations that Algorithm \ref{algo1<p<2} needs to achieve convergence. We also show the value of $\frac{|J'_{\gamma,h}(\overrightarrow{u}_k)|}{|J'_{\gamma,h}(\overrightarrow{u}_0)|}$, the value of $J_{\gamma,h}(\overrightarrow{u}_k)$, the value of the step $\alpha_k$ and the number of inner iterations needed by Algorithm \ref{algo:armijo}. As expected, the value of the functional is monotonically reduced at every iteration. The residual behaves typically as in a steepest descent algorithm, as shown in Figure \ref{fig:errp175}. However, $|J'_{\gamma,h}(\overrightarrow{u}_k)|/|J'_{\gamma,h}(\overrightarrow{u}_0)|$ decays faster in the last iterations. This fact suggests, at least experimentally, that this algorithm has a fast local convergence rate. The step $\alpha_k$ is also monotonically decreasing and the line search Algorithm \ref{algo:armijo} needs no more than two inner iterations to calculate the step. 

\begin{table}
\begin{center}
\begin{tabular}{|c|c|c|c|c|}
\hline
$\epsilon$& it.  & $J(u)$& $|J'_{\gamma,h}(u_k)|/|J_{\gamma,h}'(u_0)|$\\
\hline
  1e-4  & 10 &  -0.029107 & 1.114e-6 \\
  1e-5  & 10 &  -0.029107 & 7.064e-7 \\ 
  1e-6  & 10 &  -0.029107 & 6.655e-7 \\
\hline
\end{tabular}
\caption{Dependence on $\epsilon$ for Algorithm \ref{algo1<p<2}. Parameters: $p=1.75$, $g=0.2$ and $\gamma=10^3$}\label{tab:epsilon}
\end{center}
\end{table}

Finally, in Table \ref{tab:epsilon} we compare the behavior of the Algorithm \ref{algo1<p<2} for different values of the parameter $\epsilon$. It is clear that the performance of the Algorithm is similar in the three cases shown. Some small improvement can be seen, though, for small values of $\epsilon$. 

Let us emphasize that our method requires a low computational effort to produce results which are in good agreement with previous contributions (\textit{e.g.},\cite{Huilgol}). In fact, we only need to solve one linear system per iteration and the line search strategy needs two iterations in average. 

\subsubsection{Experiment 2}
In this experiment, we set $\Omega$ to be the unit square $(0,1)\times (0,1)$, and we compute the flow of a Herschel-Bulkley material given by $p=1.5$. We fix $f=3$, and we focus on the behaviour of the algorithm in different meshes, since we are interested in showing, at least numerically, the mesh independence of our algorithm. It is known that smaller values of $p$ imply that the functional loses regularity, making the problem a bit more challenging.  In fact, as $g$ grows, the contribution of the less regular component of the functional $\int_\Omega \psi_\gamma(\nabla u)\,dx$ increases. This fact complicates the numerical approximation of the problem. Therefore, we test our algorithm with several values of $g$ to show the versatility of our approach.

\begin{table}
\begin{center}
\begin{tabular}{|c|c|c|c|}
\hline
$g=0.1$ & $h_1$ & $h_2$ & $h_3$ \\
\hline
Iter. num. & 9 & 9 & 9  \\
$|J'_{\gamma,h}(u_k)|/|J_{\gamma,h}'(u_0)|$& 1.147e-6 & 1.667e-6  & 1.661e-6 \\
$J_{\gamma,h}(u_k)$ & -0.0395 & -0.0414 &  -0.0416  \\
\hline
$g=0.2$ & $h_1$ & $h_2$ & $h_3$ \\
\hline
Iter. num. & 9 & 8  & 8  \\
$|J'_{\gamma,h}(u_k)|/|J_{\gamma,h}'(u_0)|$&  1.490e-6 & 7.059e-7 & 3.976e-6 \\
$J_{\gamma,h}(u_k)$ & -0.0217 & -0.0231 & -0.0233 \\
\hline
$g=0.3$ & $h_1$ & $h_2$ & $h_3$ \\
\hline
Iter. num. & 18 & 19 & 19 \\
$|J'_{\gamma,h}(u_k)|/|J_{\gamma,h}'(u_0)|$& 4.232e-6 & 4.393e-6 & 1.342e-6\\
$J_{\gamma,h}(u_k)$ & -0.0105 & -0.0115 & -0.0116 \\
\hline
\end{tabular}
\caption{Convergence behavior for Algorithm \ref{algo1<p<2}. Parameters: $p=1.5$ and $\gamma=10^3$}\label{tab:ex1p<2mesh}
\end{center}
\end{table}

In Table \ref{tab:ex1p<2mesh}, we present the main features of Algorithm \ref{algo1<p<2} for several values of $g$ and different mesh sizes: $h_1\approx 0.0133$, $h_2\approx 0.0047$ and $h_3\approx 0.0029$. As expected, the number of iterations that the Algorithm needs to achieve convergence increases as $g$ does. However, for a given $g$, the number of iterations is very stable as the mesh size decreases. Also, the evolution of $|J'_{\gamma,h}(\overrightarrow{u}_k)|/|J'_{\gamma,h}(\overrightarrow{u}_0)|$ is quite similar at every mesh, as shown in Figure \ref{fig:errp15}. These facts show the robustness of our approach and numerically verify the mesh independence of the algorithm. 

\begin{figure}
\begin{center}
\includegraphics[width=50mm, height=40mm]{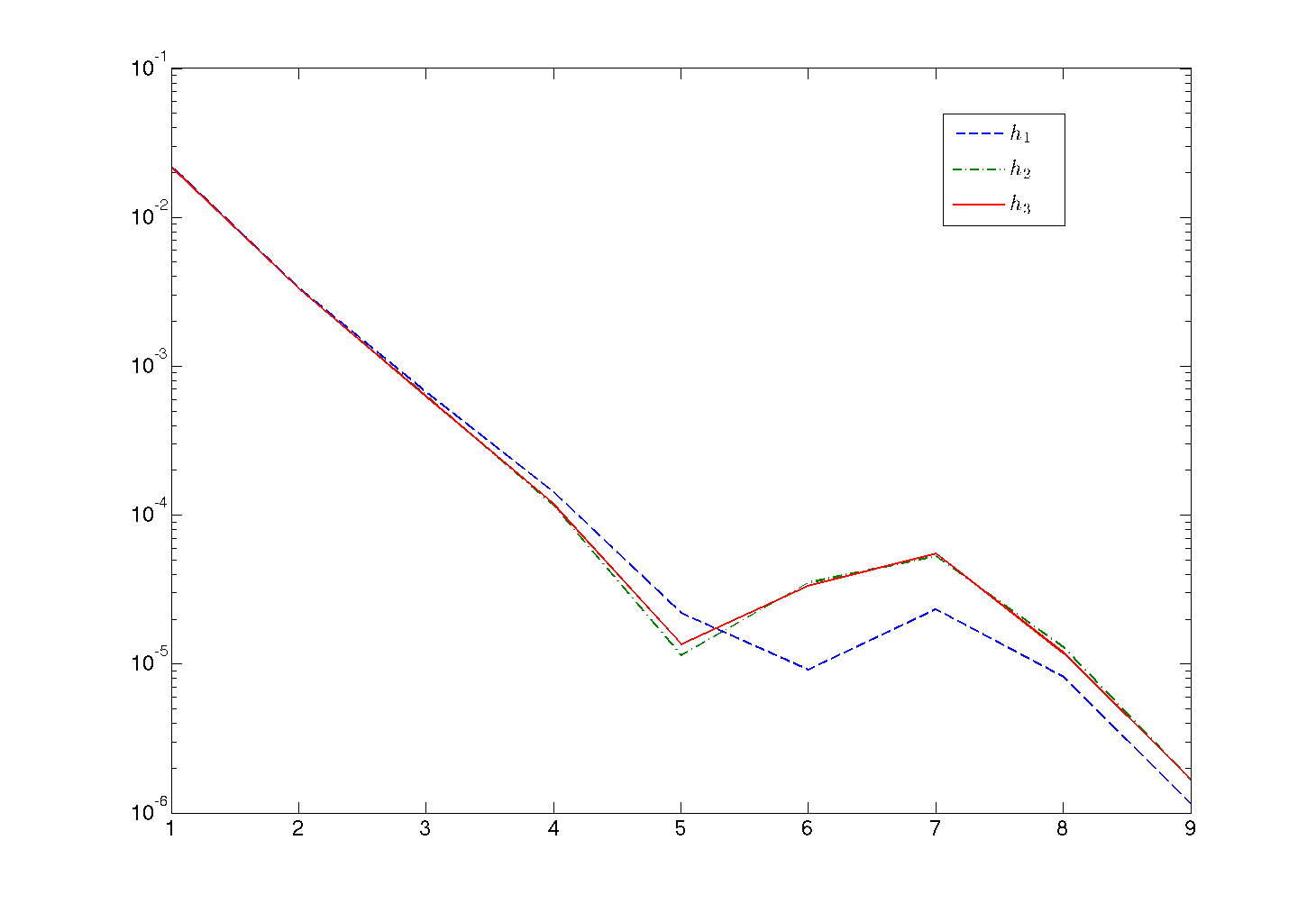}\hspace{0.1cm}\includegraphics[width=50mm, height=40mm]{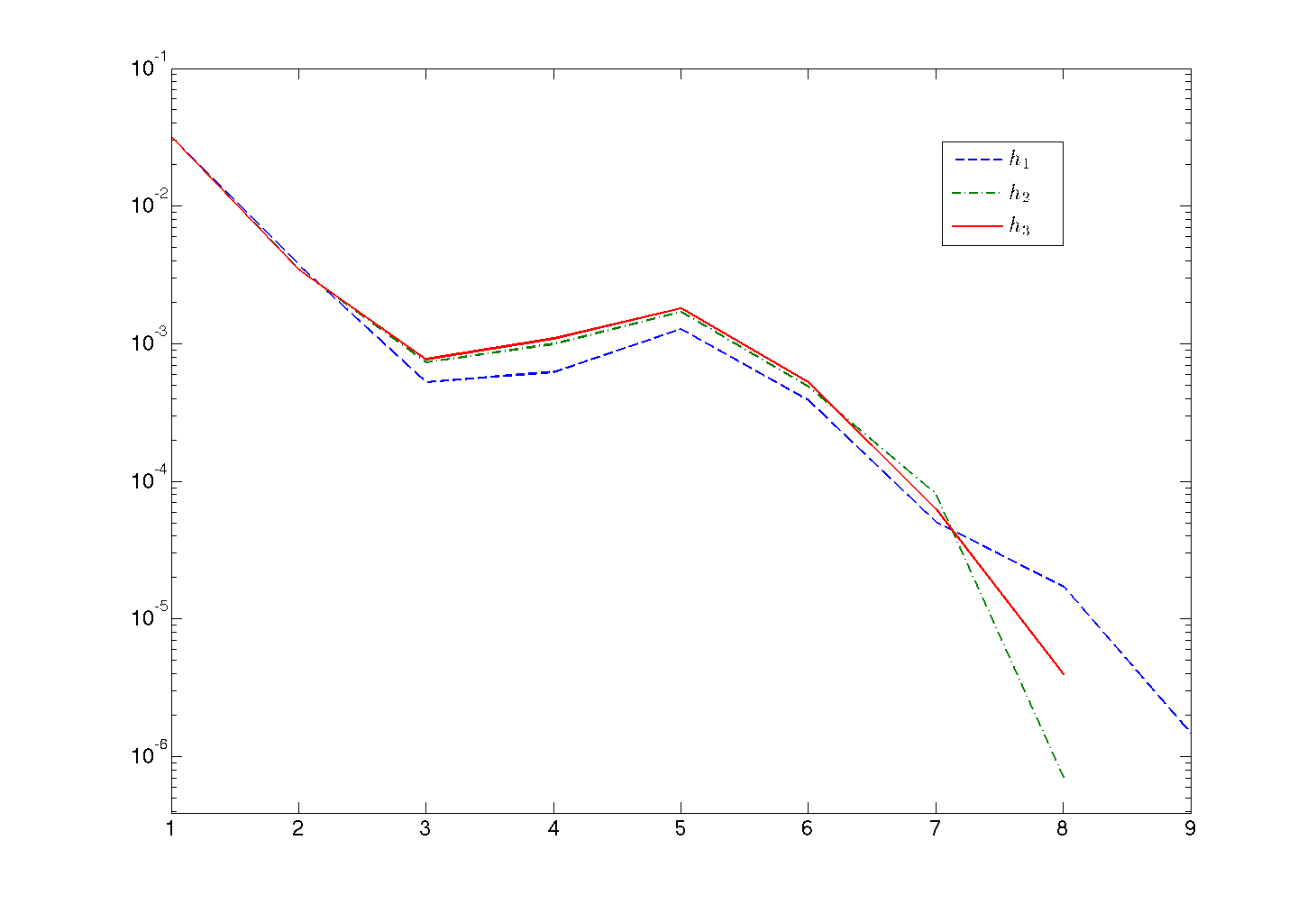}\hspace{0.1cm}
\includegraphics[width=50mm, height=40mm]{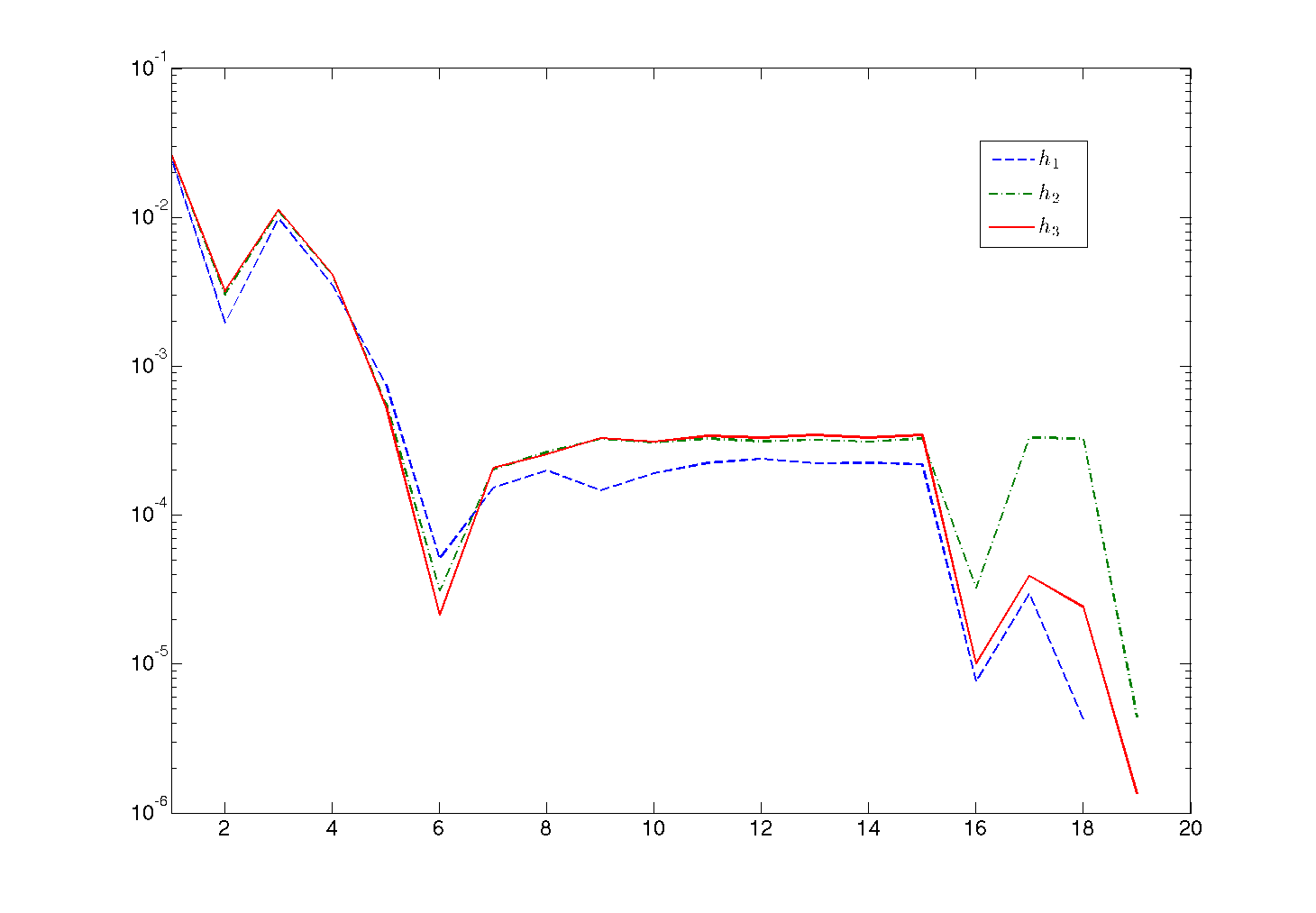}
\end{center}
\caption{Calculated residuals $\|J'(u_k)\|$, in different meshes, for $p=1.5$ and $g=0.1$ (left), $g=0.2$ (center) and $g=0.3$ (right). Parameters: $\gamma=10^3$.}\label{fig:errp15}
\end{figure}

\begin{figure}
\begin{center}
\includegraphics[width=60mm, height=50mm]{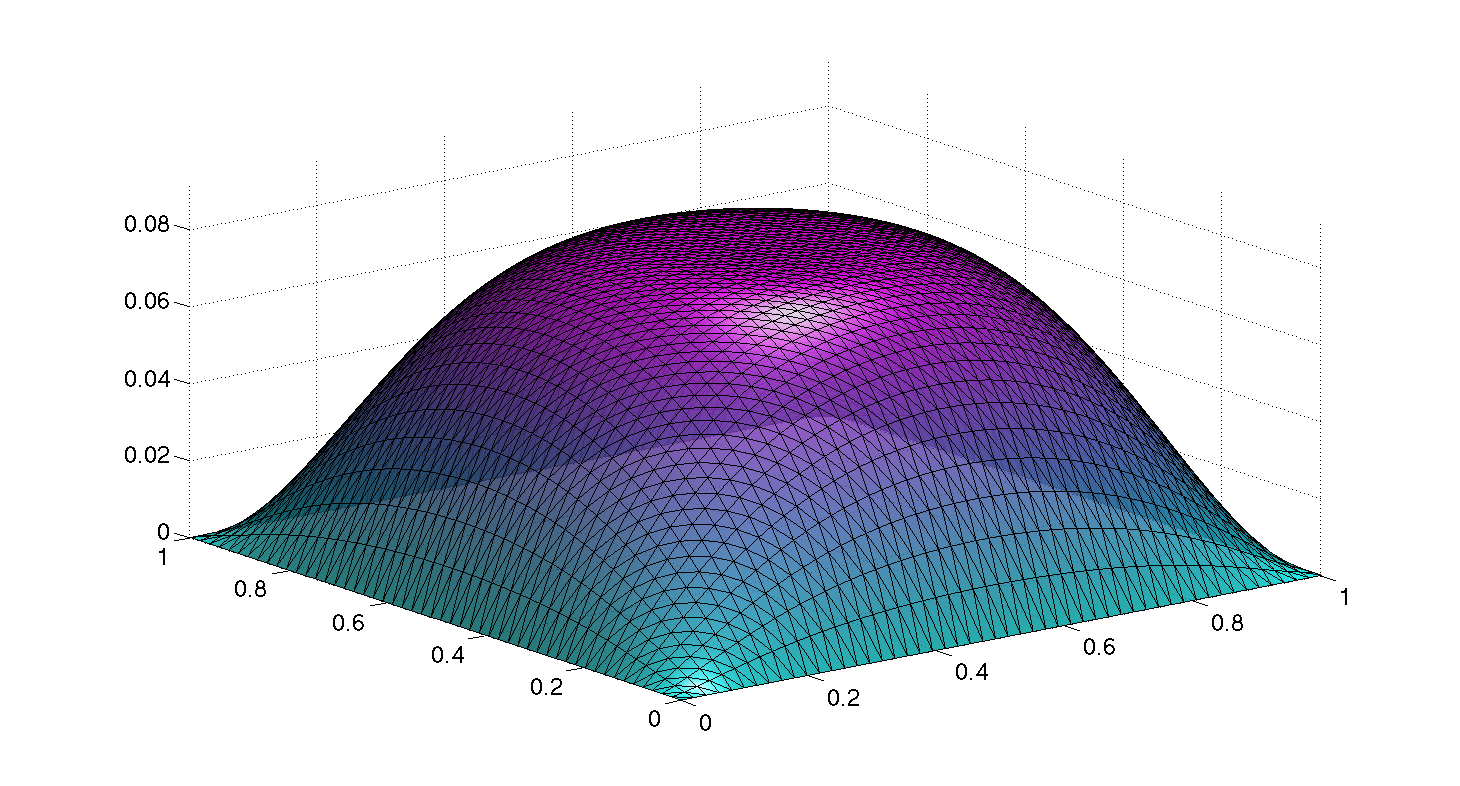}\hspace{0.5cm}\includegraphics[width=60mm, height=50mm]{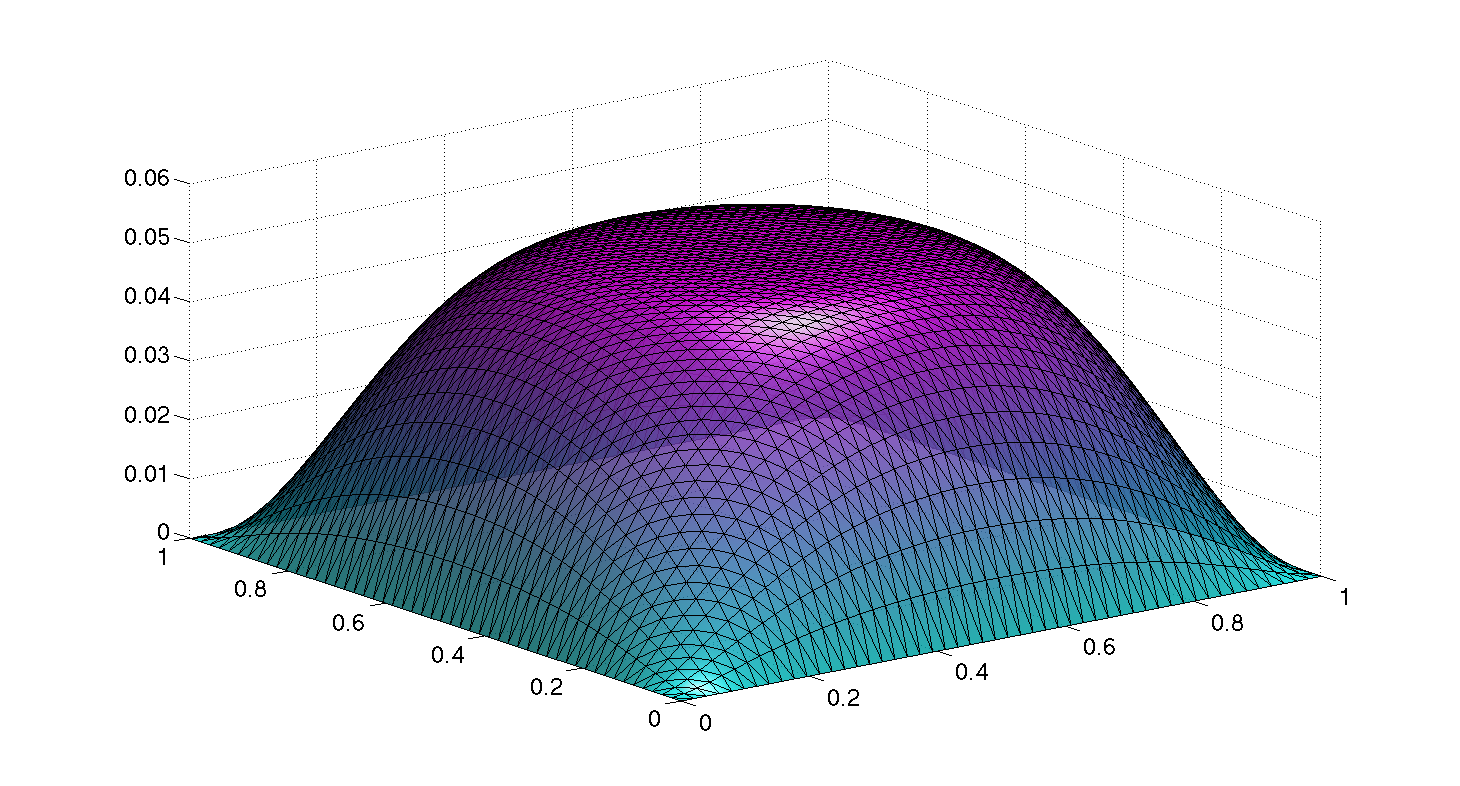}\hspace{0.5cm}
\includegraphics[width=60mm, height=50mm]{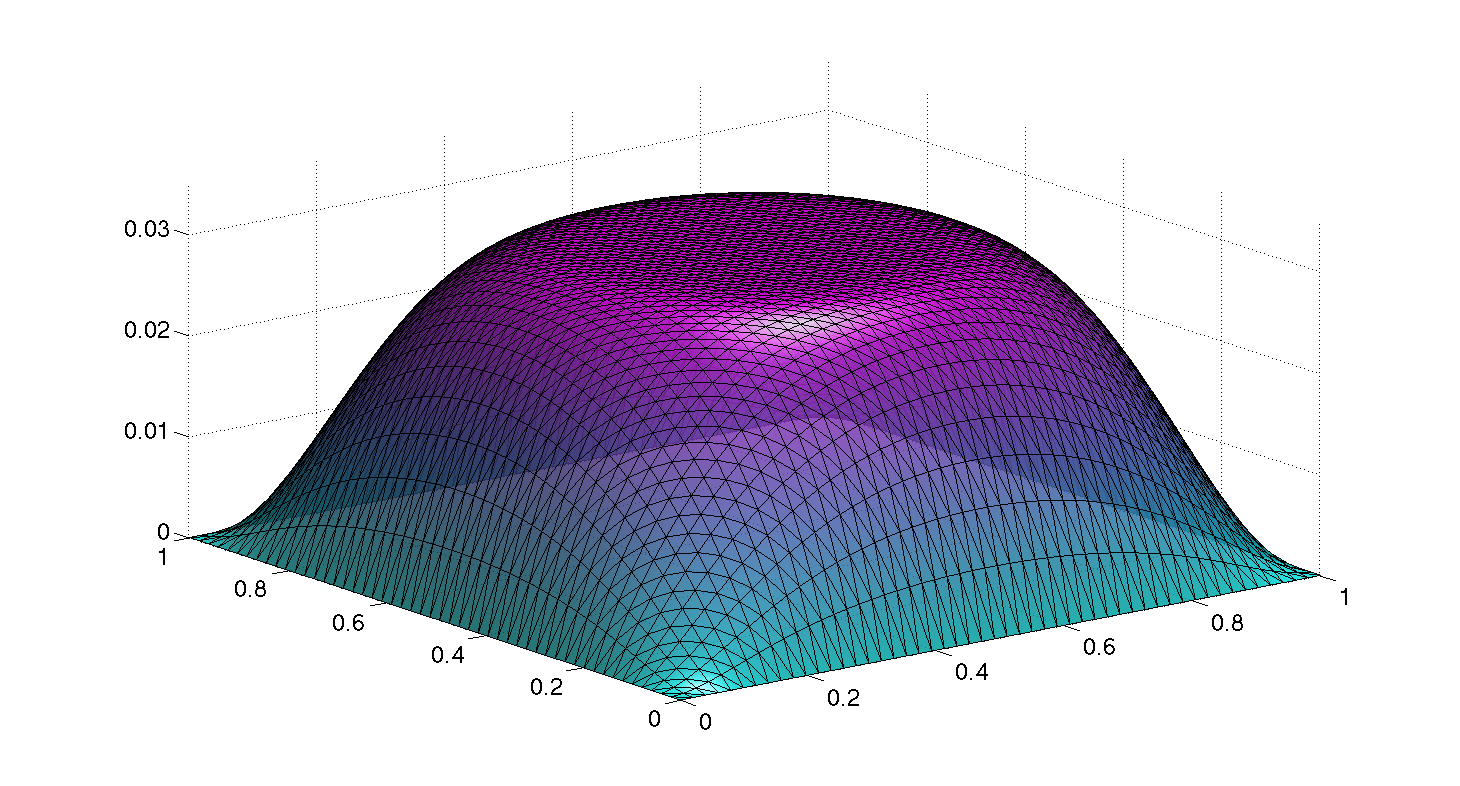}\hspace{0.5cm}
\includegraphics[width=55mm, height=45mm]{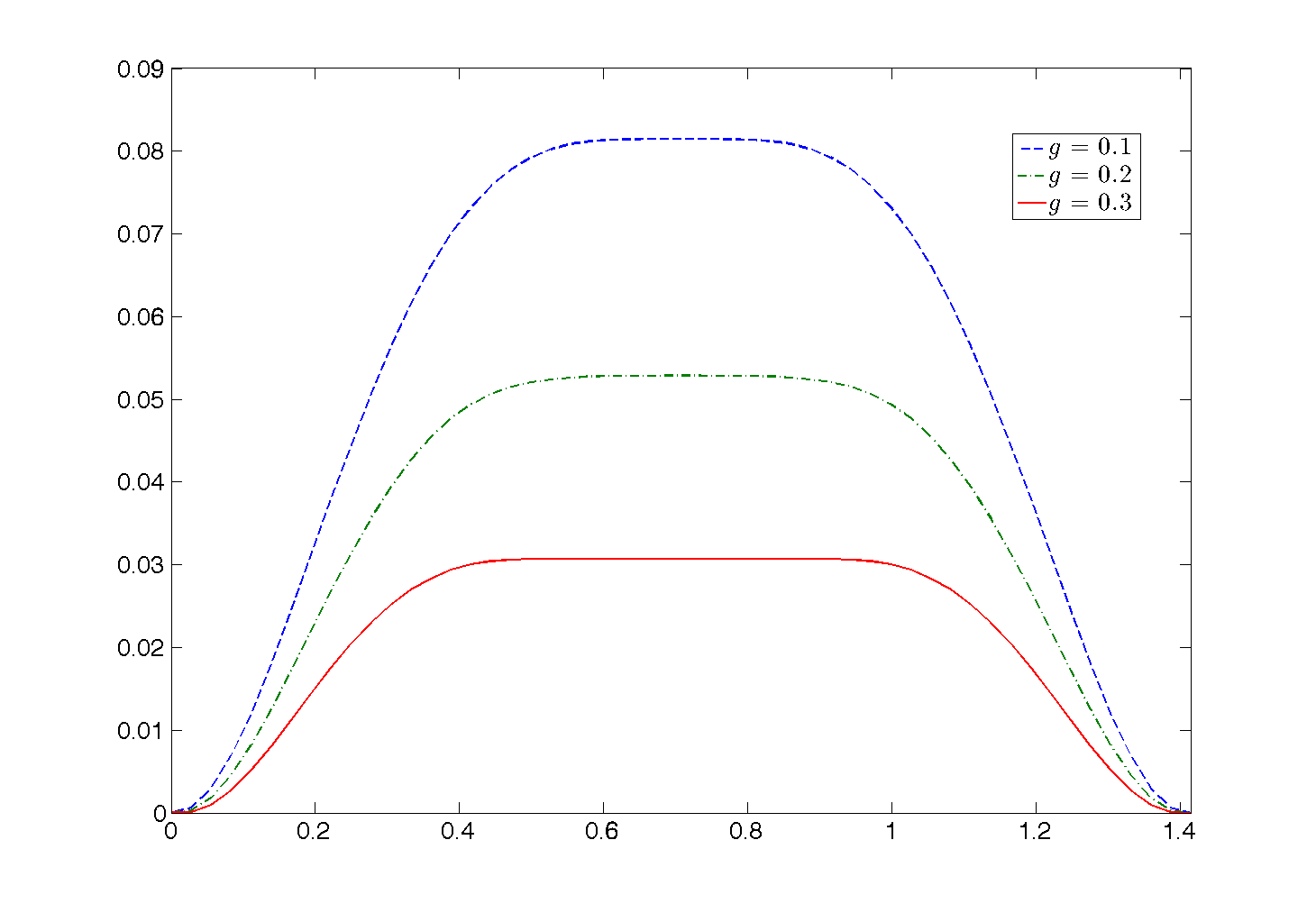}
\end{center}
\caption{Calculated $u$ for $p=1.5$ and $g=0.1$ (top, left), $g=0.2$ (top, right) and $g=0.3$ (down, left). Velocity profile for the calculated velocities along the diagonal of the square (down, right).  Parameters: $\gamma=10^3$.}\label{fig:up15}
\end{figure}

The resulting velocity functions and the velocity profiles along the diameter of the pipe are displayed in Figure \ref{fig:up15}. As in the previous case, the shear stress transmitted by a fluid layer decreases toward the center of the pipe which provokes the solid-like movement in that sector. Further, it is expected that if the value of $g$ increases, the flow tends to slow down and the flat zones tend to be bigger. This is clearly shown in the figures depicted, which are in good agreement with previous contributions (\textit{e.g.},\cite{Huilgol}). 


\subsection{Numerical Results: Case $p> 2$}
In this section, we focus on the behavior of Algorithm \ref{algodisp>2}. In the next experiments, we consider that the problem \eqref{eq:probreg} represents the flow of a Herschel-Bulkley fluid with $p> 2$, so we are in the case of a shear-thickening material (see \cite{Huilgol}). Further, we consider a constant $f$, which represents the linear decay of pressure in the pipe. As in the previous section, the constant $g$ plays the role of the Oldroy number. For further details in the mechanics of these problems, we refer the reader to \cite{Chhabra,dlRGpf,Huilgol} and the references therein. 

We initialize the algorithm \ref{algodisp>2} with the solution of the Poisson problem
$-\Delta u_0^h=f^h$, and we terminate the iterations according to the stopping criteria described in Remark \ref{rem:stop}.

As in the previous section, we use uniform triangulations described by $h$, the radius of the inscribed circumferences of the triangles in the mesh. 

It is remarkable to state that the classical $p$-Laplacian problem (\textit{i.e.}, \eqref{eq:probreg} with $g=0$) is difficult to solve when $p+1/(p-1)$ is large. This issue needs to be take into account in our case too (see \cite{Huang}). 

\subsubsection{Experiment 1}
In this experiment, we set $\Omega$ to be the unit square, and we compute the flow of a Herschel-Bulkley material with $p=4$. We analyze the behavior of the algorithm with $g=0.2$ and $f=3$. We work with a mesh given by $h\approx 0.0029$, and we use the value $\gamma=10^{3}$.

\begin{figure}
\begin{center}
\includegraphics[width=80mm, height=60mm]{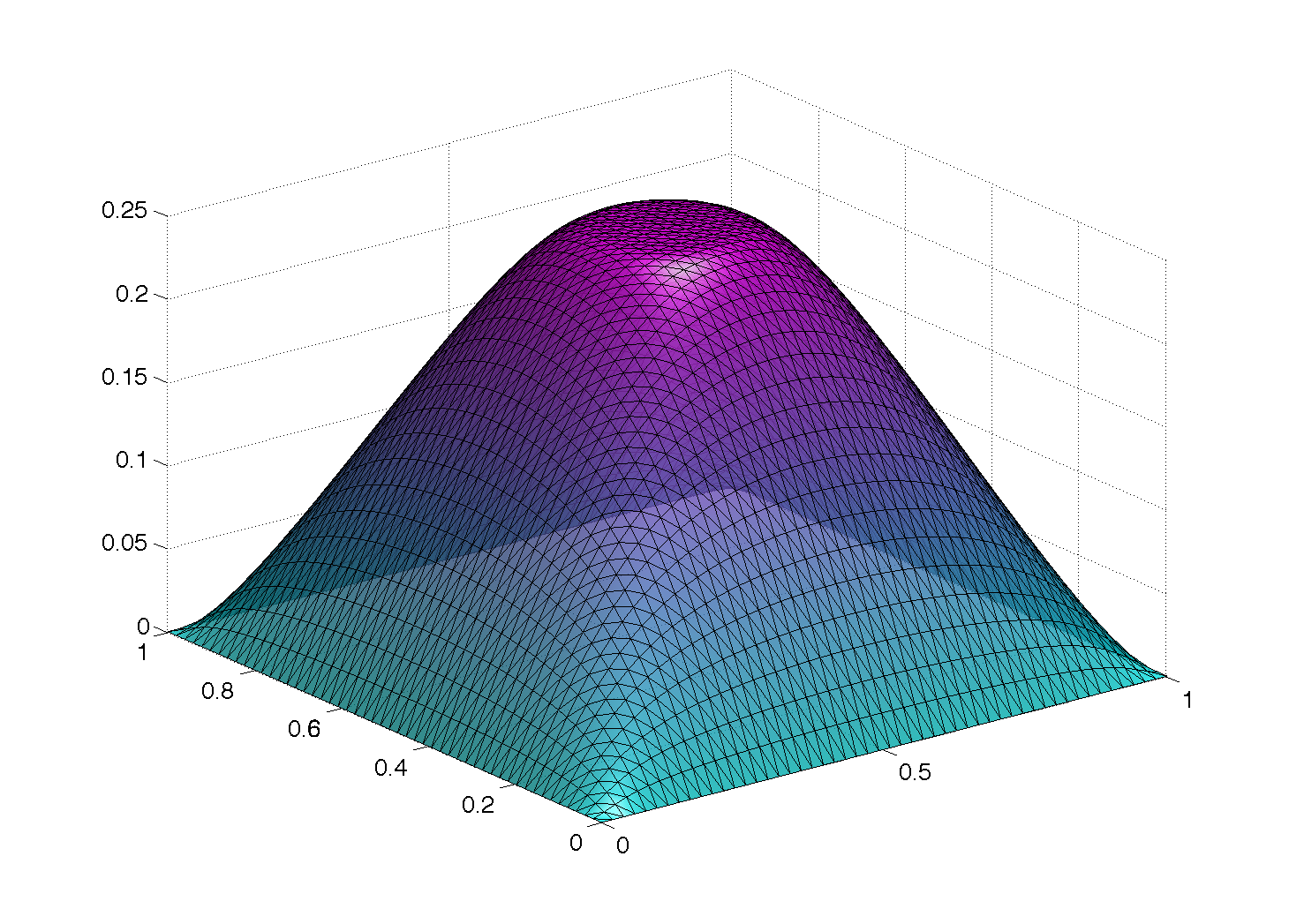}\hspace{0.cm}\includegraphics[width=75mm, height=55mm]{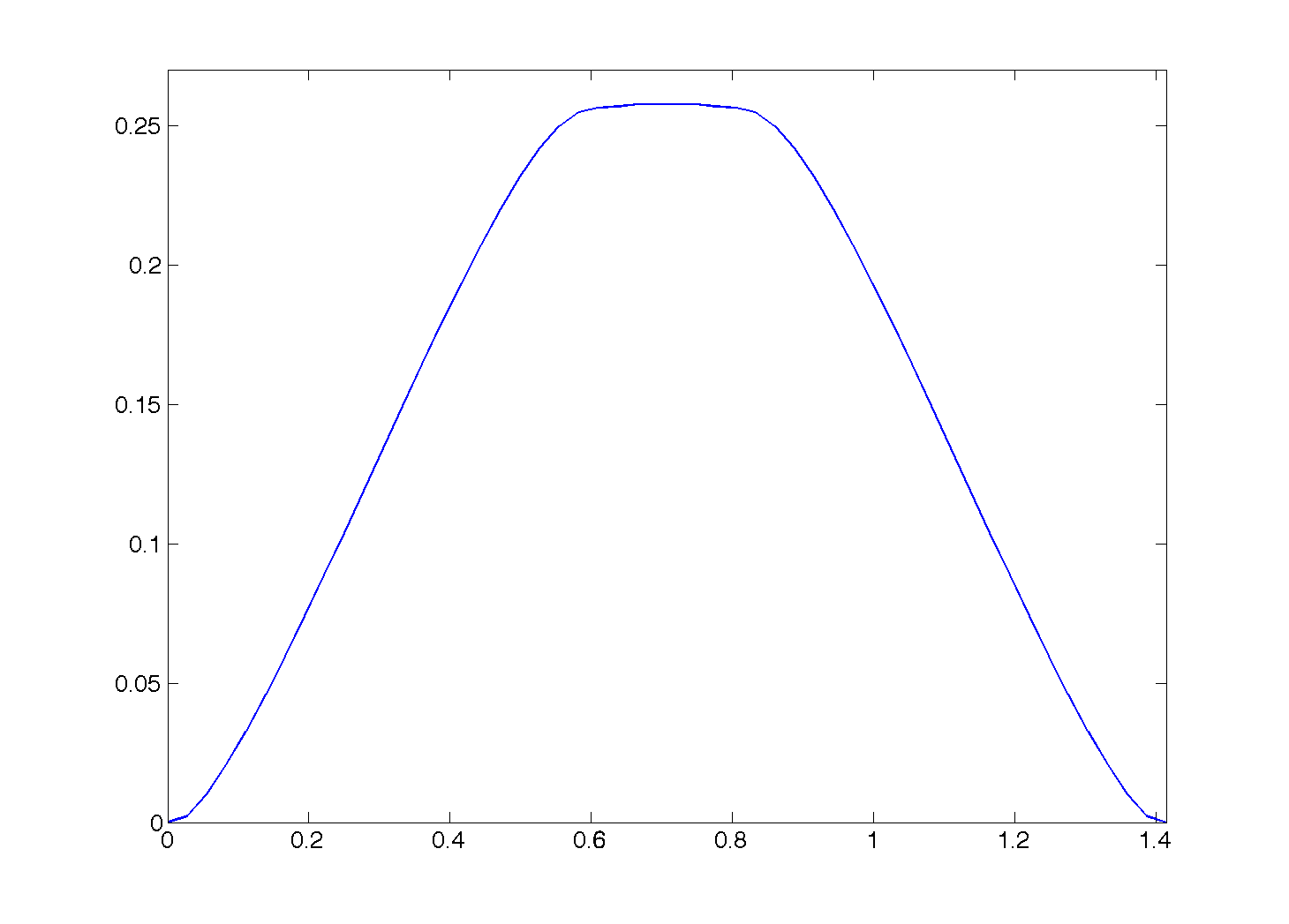}
\end{center}
\caption{Calculated $u$ for $p=4$ (left) and velocity profile along the diagonal of the pipe (right). Parameters: $g=0.2$ and $\gamma=10^3$.}\label{fig:up4}
\end{figure}

The resulting velocity function and the velocity profile along the diagonal of the square pipe are displayed in Figure \ref{fig:up4}. The graphics illustrate the expected mechanical properties of the material: the viscosity of shear-thickening materials increases with the rate of shear strain. In this case, since the shear stress transmitted by a fluid layer decreases toward the center of the pipe, the velocity takes a conical form with a flat part in the exact center of the geometry. 

In Table \ref{tab:p4} we show the number of iterations that Algorithm \ref{algodisp>2} needs to achieve convergence. We also show the evolution of $|J'_{\gamma,h}(\overrightarrow{u}_k)|/|J'_{\gamma,h}(\overrightarrow{u}_0)|$, $J_{\gamma,h}(\overrightarrow{u}_k)$, $\alpha_k$ and the number of inner iterations needed by Algorithm \ref{algo:armijo} to achieve convergence. The Algorithm performs as expected, \textit{i.e.}, the value of the functional is monotonically reduced at every iteration. Further, the residual behaves typically as in a deepest descent algorithm, but it shows fast local convergence. This behaviour can be appreciated in Figure \ref{fig:errp4}.  This fact can be explained due to the stronger regularity of the differential operator when $p>2$. As soon as $g$ increases, this effect will be lost. This will be shown in the next experiment. 

\begin{figure}
\begin{center}
\includegraphics[width=70mm, height=50mm]{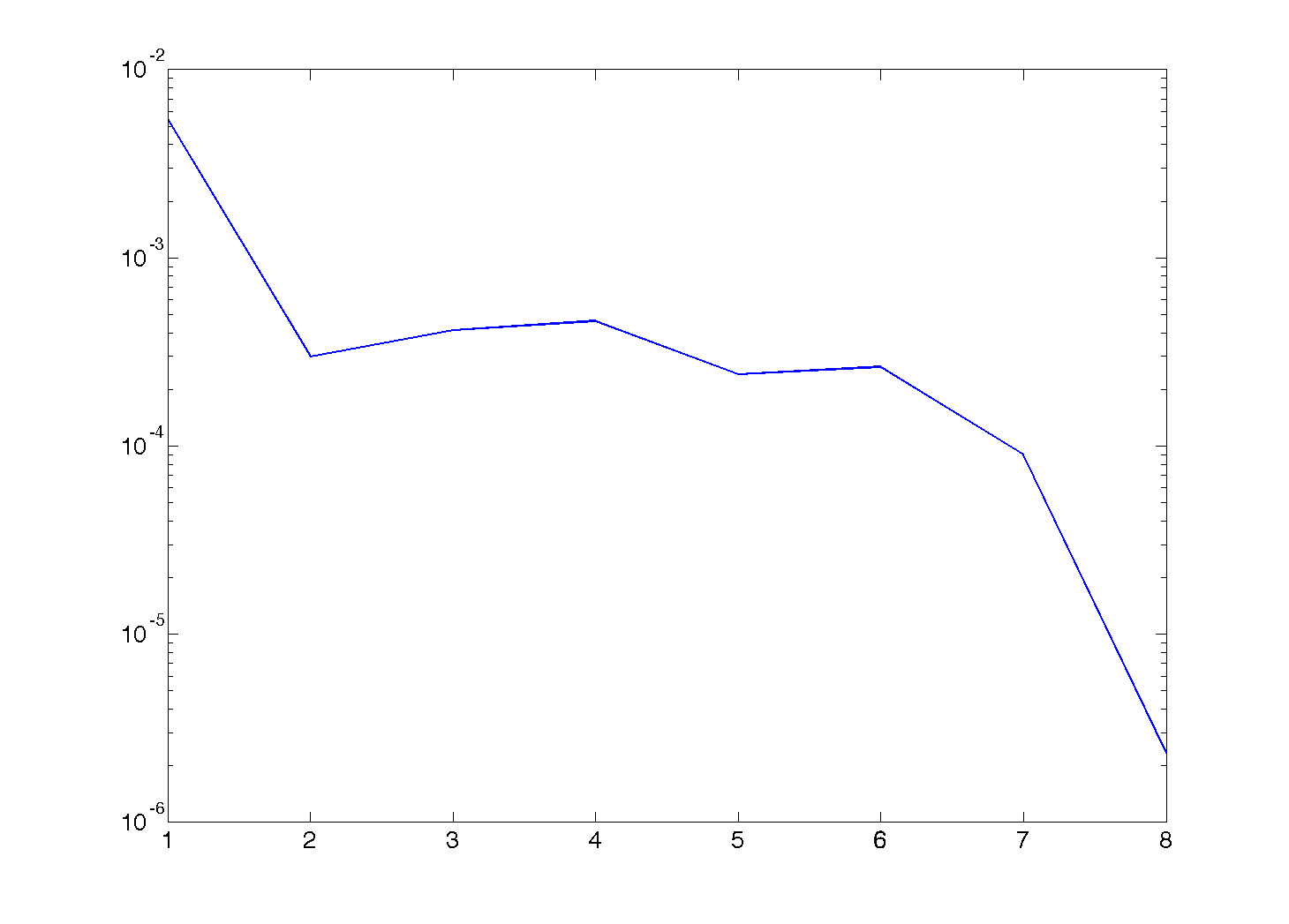}
\end{center}
\caption{Calculated residuals $|J'_{\gamma,h}(u_k)|/|J_{\gamma,h}'(u_0)|$, for $p=4$ and $g=0.1$. Parameters: $\gamma=10^3$.}\label{fig:errp4}
\end{figure}

Next, note that the step $\alpha_k$ does not have a monotone evolution during all the iterations. Also, the line search Algorithm \ref{algo:armijo}, for some iterations, needs no inner iterations to achieve convergence. These facts can be explained due to the stronger convexity that the functional exhibits when $p>2$, which implies that $\varphi_k(\alpha)=J(u_k+\alpha w_k)$ is better approximated by the quadratic model $m_k$.

\begin{table}
\begin{center}
\begin{tabular}{|c|c|c|c|c|}
\hline
it. & $|J'_{\gamma,h}(\overrightarrow{u}_k)|/|J'_{\gamma,h}(\overrightarrow{u}_0)|$ & $J_{\gamma,h}(\overrightarrow{u}_k)$ & $\alpha_k$ & l.s. it.\\
\hline
 1 & 5.4526e-3 & -0.17950 & 1.0000 & 0\\ 
 2 & 2.9779e-4 & -0.18069 & 0.4974 & 1\\
 3 & 4.1359e-4 & -0.18089 & 1.0000 & 0\\
 4 & 4.6274e-4 & -0.18101 & 0.3882 & 1\\
 5 & 2.4101e-4 & -0.18107 & 0.2402 & 1\\
 6 & 2.6292e-4 & -0.18108 & 0.0750 & 2\\
 7 & 8.9793e-5 & -0.18109 & 0.0333 & 3\\
 8 & 2.3358e-6 & -0.18109 & 0.0375 & 2\\
  \hline
\end{tabular}
\caption{Convergence behavior for Algorithm \ref{algodisp>2}. Parameters: $p=4$, $g=0.2$ and $\gamma=10^3$}\label{tab:p4}
\end{center}
\end{table}


\subsubsection{Experiment 2}
In this experiment, we set $\Omega$ to be the unit ball, and we compute the flow of a Herschel-Bulkley material with $p=10$. We analyze the behavior of the algorithm with $f=1$ and compare the performance of the Algorithm for $g=0.1$ and $g=0.4$.

\begin{figure}
\begin{center}
\includegraphics[width=70mm, height=50mm]{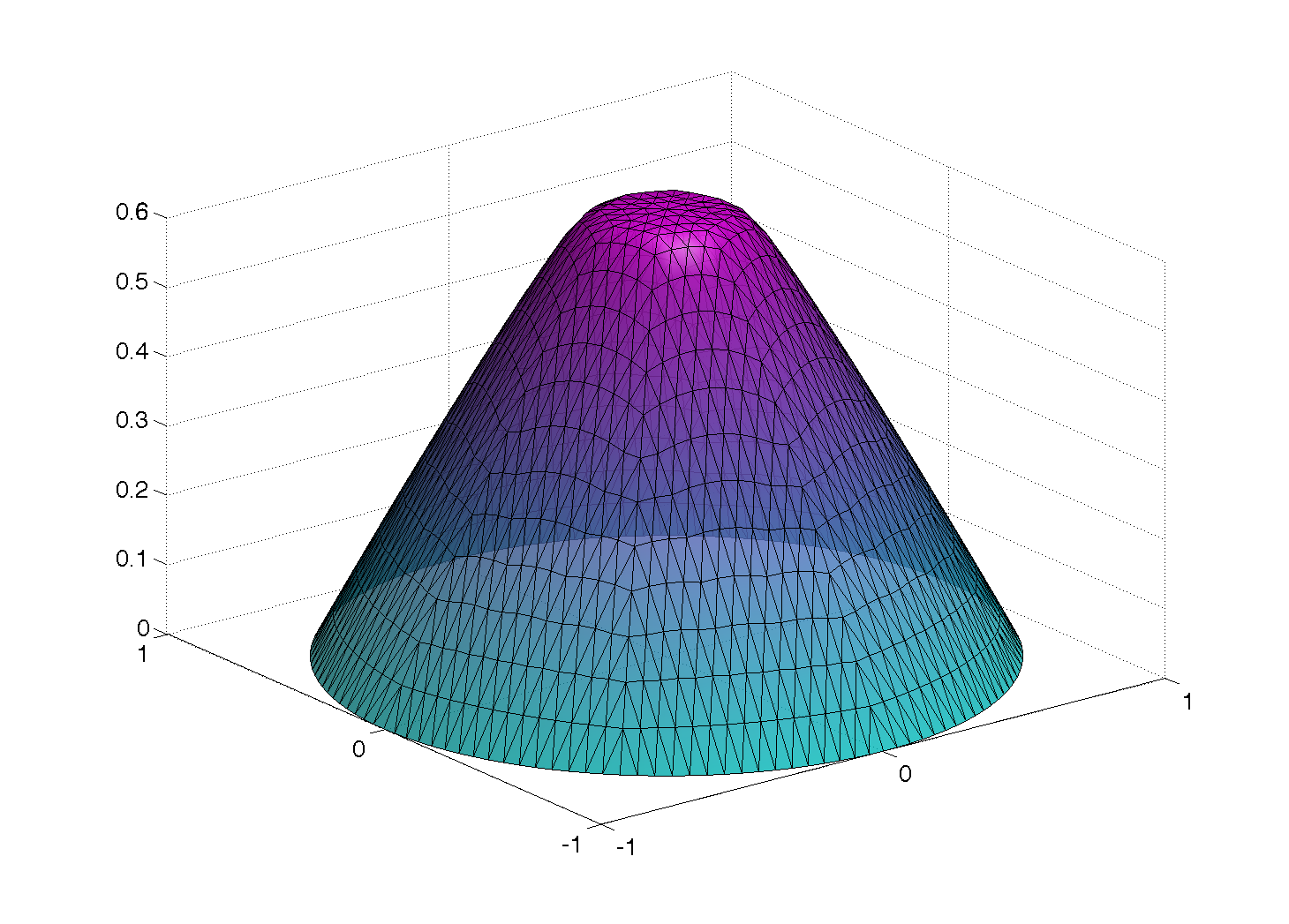}\hspace{0.cm}\includegraphics[width=70mm, height=50mm]{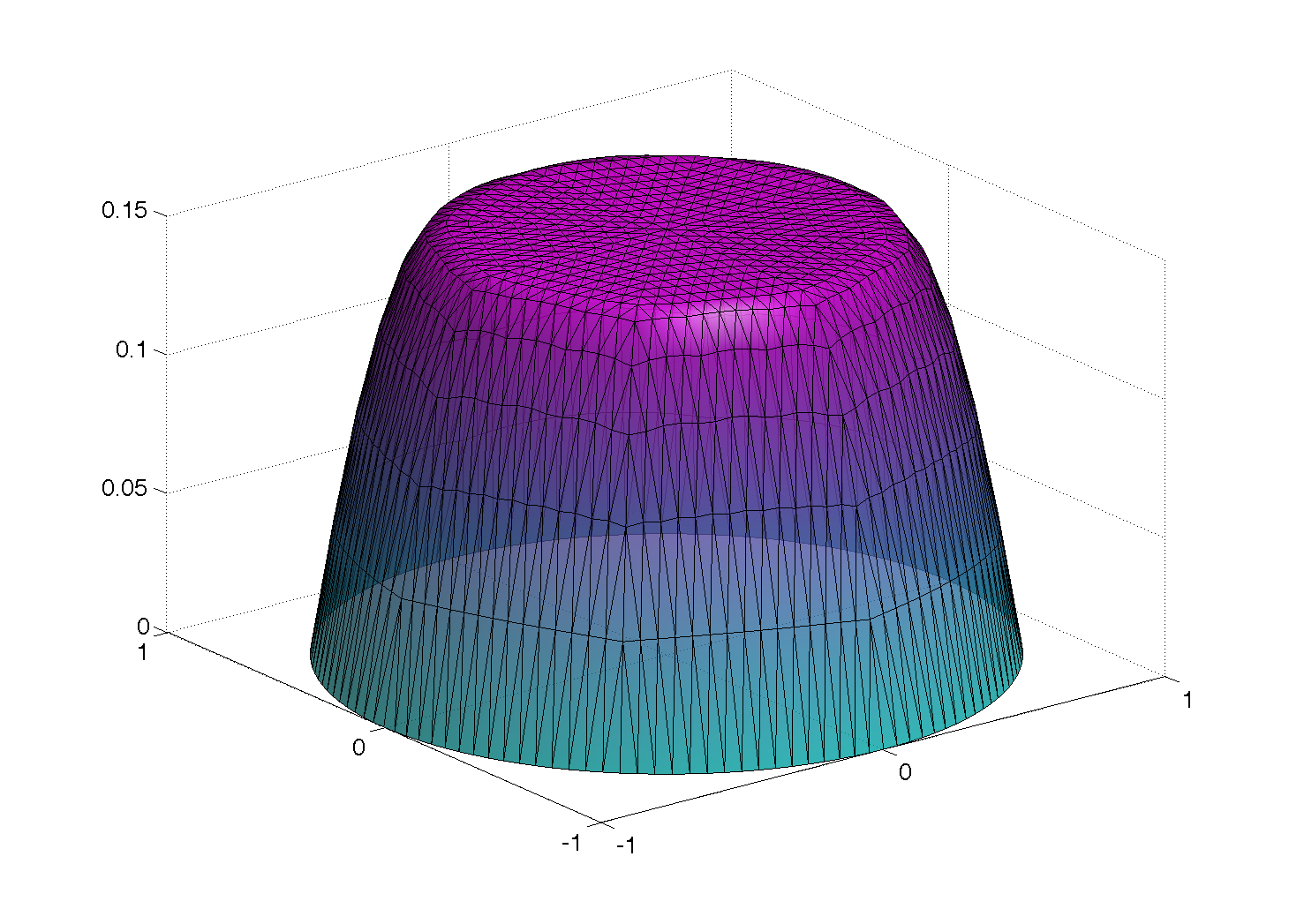}
\end{center}
\caption{Calculated $u$ for $p=10$ and $g=0.1$ (left) and $g=0.4$ (right). Parameters: $\gamma=10^3$.}\label{fig:up10}
\end{figure}

\begin{figure}
\begin{center}
\includegraphics[width=70mm, height=50mm]{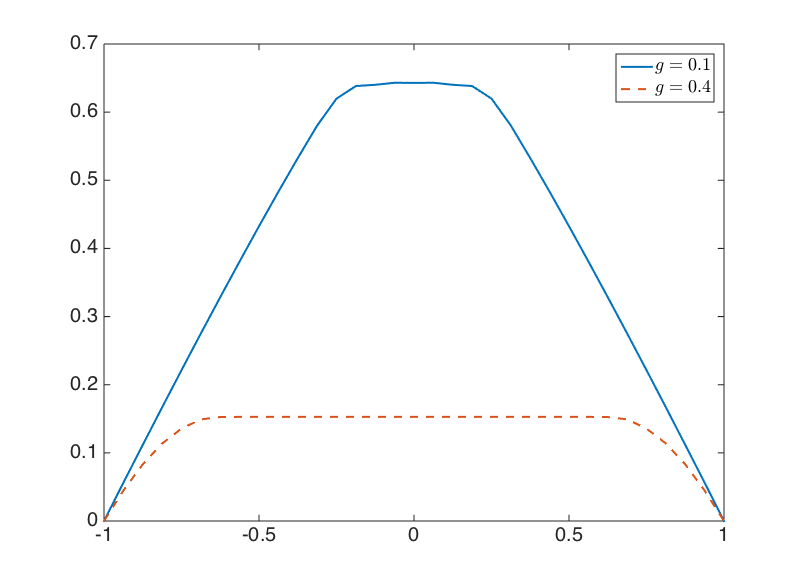}
\end{center}
\caption{Velocity profiles for $p=10$.}\label{fig:profp10}
\end{figure}

In Figure \ref{fig:up10} the calculated velocities for $g=0.1$ and $g=0.4$ are depicted. As stated in the previous experiment, small values of $g$ make the problem be close to the classical $p$-Laplacian problem. In this case, the Algorithm exhibits good performance. On the other hand, bigger values of $g$ make the problem less regular. Also, from the mechanical point of view if the values of $g$ increase, the size of the inactive zones increases as well. Therefore, the problem is more difficult to be approximated (see \cite{dlRGpf,Huilgol}). 

\begin{figure}
\begin{center}
\includegraphics[width=70mm, height=50mm]{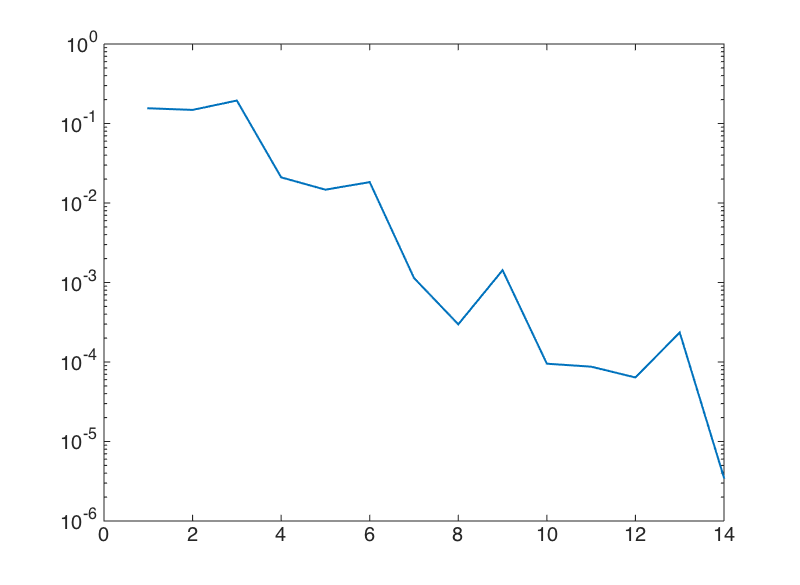}\hspace{0.5cm}\includegraphics[width=70mm, height=50mm]{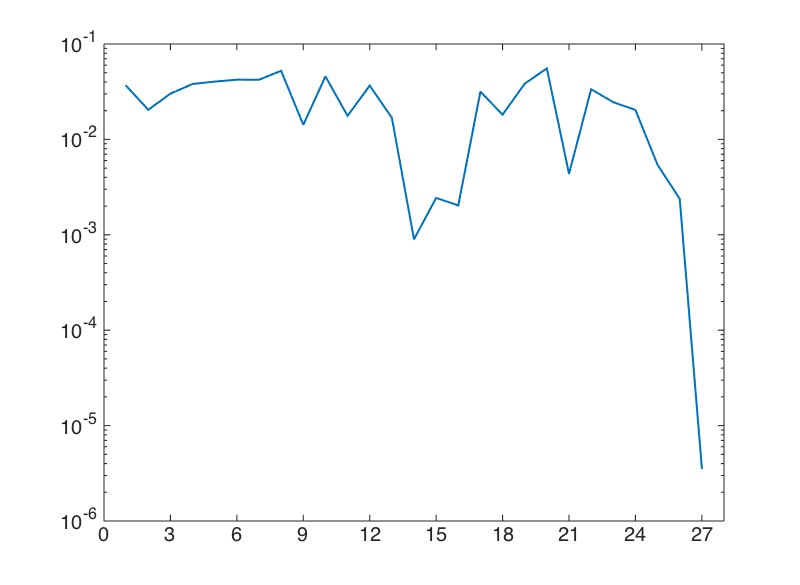}
\end{center}
\caption{Calculated residual $|J'_{\gamma,h}(u_k)|/|J_{\gamma,h}'(u_0)|$ for $p=10$ and $g=0.1$ (left) and $g=0.4$ (right). Parameters: $\gamma=10^3$.}\label{fig:errp10}
\end{figure}

Regarding the performance of the Algorithm, in Figure \ref{fig:errp10} the evolution of the error for $g=0.1$ and $g=0.4$ are depicted. For $g=0.1$, the error evolves in a typical way and the Algorithm achieves convergence in 14 iterations. On the other hand, for $g=0.4$ the error is very oscillating and the Algorithm needs 27 iterations to achieve convergence. As expected, the fact that $p$ and $g$ increase provokes instabilities in the algorithm. 

Finally, in Table \ref{tab:exp>2mesh} we show the behaviour of the algorithm \ref{algodisp>2} in different meshes, considering a fixed value of $g=0.1$. Here it can be appreciated that the Algorithm requires more iterations to achieve convergence as the mesh gets finer. This fact suggests that the Algorithm is not mesh independent. This is not shocking news, since the convergence result for Algorithm \ref{algodisp>2} was obtained in a finite dimensional space. However, the algorithm still requires relatively few iterations to produce reliable solutions with low computational cost.

\begin{table}
\begin{center}
\begin{tabular}{|c|c|c|c|c|}
\hline
$g=0.1$ & $h_1$ & $h_2$ & $h_3$ & $h_4$\\
\hline
Iter. num. & 13 & 14 & 22 & 37 \\
$|J'_{\gamma,h}(\overrightarrow{u}_k)|/|J'_{\gamma,h}(\overrightarrow{u}_0)|$ & 5.242e-7 & 3.868e-4  & 3.931e-6  & 4.518e-6\\
$J_{\gamma,h}(\overrightarrow{u}_k)$ & -0.550987 & -0.582629 &  -0.590403 & -0.592738 \\
\hline
\end{tabular}
\caption{Convergence behavior for Algorithm \ref{algo1<p<2}. Parameters: $p=10$, $g=0.1$ and $\gamma=10^3$}\label{tab:exp>2mesh}
\end{center}
\end{table}

\subsubsection{Experiment 3}
One key issue in our approach is the size of the regularization parameter. In fact, theoretically, we obtain a better approximation for the problem when $\gamma$ is big. However, it is not a good strategy to directly run the Algorithms with high values for the parameter, since instabilities can arise in the process. In order to help the regularization parameter reach high values, we perform a simple but effective continuation technique: given $\gamma_k$, we run the algorithm and obtain the corresponding solution $\overrightarrow{u}^h_{\gamma_k}$. Next, we set $\gamma_{k+1}=10\gamma_k$, initialize the algorithm with $\overrightarrow{u}^h_{\gamma_k}$ and run it to obtain $\overrightarrow{u}^h_{\gamma_{k+1}}$. We stop this process when $\gamma$ equals $10^6$. 

\begin{figure}
\begin{center}
\includegraphics[width=70mm, height=50mm]{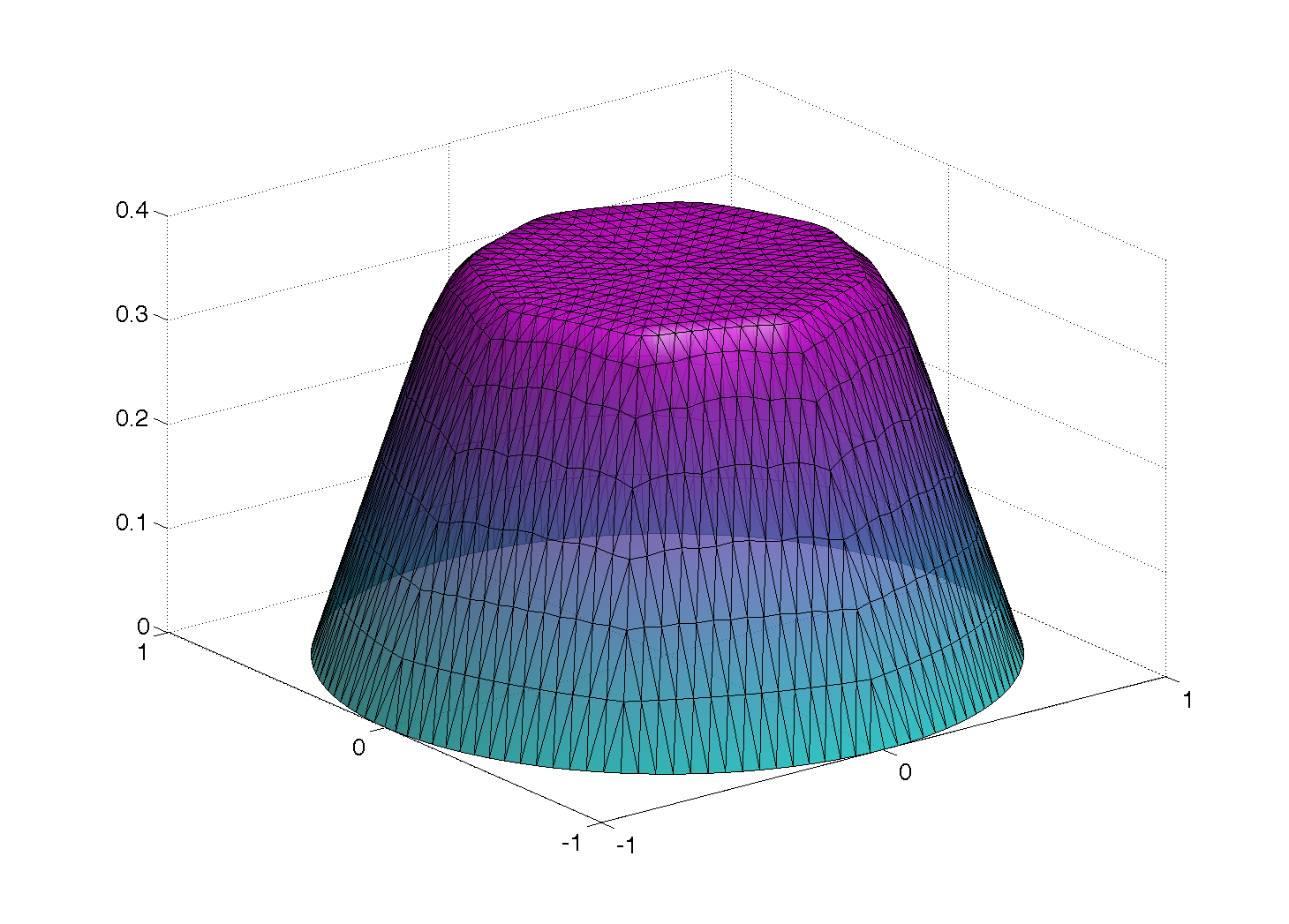}
\end{center}
\caption{Calculated velocity $u$ for $p=100$ and $g=0.3$ at $\gamma=10^5$.}\label{fig:up100}
\end{figure}

As stated before, a challenging problem when using the $p$-Laplacian operator arises when $p$ is big. Therefore, we are interested in the computation of the flow of a Herschel-Bulkley material with $p=100$ and $g=0.3$. If we set $\gamma=10^6$ and run the Algorithm \ref{algodisp>2}, convergence is not achieved. However, by using the continuation strategy, we obtain the solution for this problem. 

The convergence history is shown in Table \ref{tab:p100}. We show the number of iterations that the Algorithm \ref{algodisp>2} needs to achieve convergence for each $\gamma_k$, the value of the functional $J_{\gamma,h}(\overrightarrow{u}_{\gamma_k}^h)$ and the norm of the calculated velocity $\overrightarrow{u}_{\gamma_k}^h$. It is possible to observe that, although the continuation technique helps the algorithm achieve convergence for high values of $\gamma_k$, the value of the functional and the norm of the velocity stabilize as soon as $\gamma_k$ equals $10^3$. This fact can be explained since the regularization procedure is sharp. Thus, for values around $\gamma_k=10^3$ provides reliable results for the problem. However, we think that a further research in path following methods can clarify these aspects (see \cite{dlRGpf}). 

\begin{table}
\begin{center}
\begin{tabular}{|c|c|c|c|c|c|c|}
\hline
$\gamma_k$ & $10^1$ & $10^2$ & $10^3$ & $10^4$ & $10^5$ & $10^6$\\
\hline
Iter. num. & 35 & 11 & 5 & 9 & 2 & 1 \\
$J_{\gamma,h}(\overrightarrow{u}^h_{\gamma_k})$ & -0.2452 & -0.2354 &  -0.2344 & -0.2343 & -0.2343 & -0.2343 \\
$\|\overrightarrow{u}^h_{\gamma_k}\|_{p,h}$ & 9.0090 & 9.0066 & 9.0063 & 9.0063 & 9.0063& 9.0063\\
\hline
\end{tabular}
\caption{Behavior of the continuation technique. Parameters: $p=100$ and $g=0.4$}\label{tab:p100}
\end{center}
\end{table}

\section{Conclusions}\label{sec:conclusions}
In this paper, we focused on the numerical resolution of a class of variational inequalities of the second kind involving the $p$-Laplacian operator and the $L^1$-norm of the gradient. The non differentiability of the associated functional was overcame with a Huber regularization procedure. This kind of local regularization has proved to be efficient in the context of this kind of problems. Based on optimization and variational techniques, we proposed preconditioned descent algorithms for coping the two cases $1<p<2$ and $p>2$. For the first case, we proposed an infinite dimensional descent algorithm and proved a global convergence result for it. The second case posed a difficult analytical issue, due to the lack of regularity of the candidates for descent directions. Thus, we proposed an algorithm in a finite dimensional setting and proved a global convergence result for this algorithm as well. Several numerical experiments were carried out to show the main features of the numerical approach. These numerical examples were constructed focusing on the applications to the flow of Herschel-Bulkley materials. Due to the structure of all the algorithms proposed, it was only necessary to solve one linear system at each iteration of the algorithms. This fact implied a low computational cost for all our numerical realisation. 

In order to continue this research, we consider that a deeper analysis of the case $p>2$ is an interesting perspective. Here, the use of $H^s$ spaces provides a promising way to follow. Also, the combination of this approach with multigrid algorithms will be useful in order to cope more challenging problems, such as the $p$-Stokes problem (2D and 3D flows of Herschel-Bulkley materials). Finally, the analysis and simulation of blood flow models involving the Herschel-Bulkley structure looks like a very promising field of research.
\begin{acknowledgements}
I would like to thank Prof. Dr. Juan Carlos De los Reyes (ModeMat-Quito) and Prof. Dr. Eduardo Casas (Univ. de Cantabria-Spain) for all the helpful discussions and good insights in the problem. I also would like to thank the anonymous referees for many helpful comments which lead to a significant improvement of the article. Finally, thanks to Prof. Dr. Michael Hinze, Prof. Dr. Winniefred Wollner and Prof. Dr. Ingenuin Gasser for the kind hospitality and interesting discussions during my stay in Hamburg Universit\"at.\end{acknowledgements}

\end{document}